\def\@setthanks{\vspace{-\baselineskip}\def\thanks##1{\@par##1\@addpunct.}\thankses}
\newcommand{\1}{\mathds{1}}
\newtheorem{theorem}{Theorem}
\newtheorem{lemma}[theorem]{Lemma}
\newtheorem{definition}[theorem]{Definition}
\newtheorem{proposition}[theorem]{Proposition}
\newtheorem{example}{Example}
\newtheorem{remark}[theorem]{Remark}
\title[The convergence rate of LQ-MFG]
{The convergence rate of the equilibrium measure for the hybrid LQG Mean Field Game}
\author{Jiamin Jian $^{\ast}$}
\email{jjian2@wpi.edu$^{\ast}$}
\author{Peiyao Lai $^{\dagger}$}
\email{plai@wpi.edu$^{\dagger}$}
\author{Qingshuo Song $^{\ddagger}$}
\email[Corresponding author]{qsong@wpi.edu$^{\ddagger}$}
\author{Jiaxuan Ye $^{\S}$}
\email{jye.wpi.edu$^{\S}$}
\thanks{Department of Mathematical Sciences, Worcester Polytechnic Institute}
\date{}                                           % Activate to display a given date or no date
\begin{document}
\maketitle
%\section{}
%\subsection{}
%\layout
\begin{abstract}
In this work, we study the convergence rate of the $N$-player LQG game with a Markov chain common noise towards its asymptotic Mean Field Game. %\textcolor{blue}{
By postulating a Markovian structure via two auxiliary processes for the first and second moments of the Mean Field Game equilibrium and applying the fixed point condition in Mean Field Game, we first provide the characterization of the equilibrium measure in Mean Field Game with a finite-dimensional Riccati system of ODEs.  Additionally,  
with an explicit coupling of the optimal trajectory of the $N$-player game driven by $N$ dimensional Brownian motion and Mean Field Game counterpart driven by one-dimensional Brownian motion, we obtain the convergence rate $O(N^{-1/2})$ with respect to 2-Wasserstein distance.
%}
\end{abstract}

{\bf Keywords. }
Convergence rate,  Mean Field Games, Common noise

{\bf AMS subject classifications.}
  91A16, 93E20

\section{Introduction}
Mean Field Game (MFG) theory  is intended to describe an asymptotic limit of complex $N$-player differential game invariant to a reshuffling of the players' indices, and has attracted resurgent attention from numerous researchers in probability after its pioneering works of \cite[Lasry and Lions]{LJ07} and \cite[Huang, Caines, and Malhame]{HMC06}, and we refer to comprehensive descriptions to the book \cite[Carmona and Delarue]{carmona2018} and the references therein.
	
In this paper, we study the convergence rate of equilibrium measures of $N$-player differential game in the context of Linear-Quadratic (LQ) structure with a common noise to its limiting MFG system. Different from the works mentioned above, the common noise in this paper is a continuous-time Markov chain (CTMC) instead of Brownian motion, which often models the real-world control problems associated with hybrid systems. %\textcolor{blue}{
Markov chains are widely used to model systems that exhibit randomness and transition between different states. In various real-world scenarios, especially in economics (see \cite{TYL16}), finance (see \cite{ZY03}), biology (see \cite{ZY09}), and engineering (see \cite{ZYB97}), the dynamics of systems can be effectively represented as discrete states with probabilistic transitions between them. By using CTMC, the applications aim to model less frequently changing common noises, such as government policies implemented by two different regimes.
%}

LQ control problems have been widely recognized in the stochastic control theory due to their broad applications. More importantly, LQ structure leads to solvability in a closed form, namely the Ricatti system, and this usually sheds light on many fundamental properties of the control theory. For this reason, 
LQ structure has also been studied in MFGs with or without common noises for its importance. 
The related literature include
major and minor LQG Mean Field Games system (\cite{Hua10, NSH12, FJC20});
social optimal in LQG Mean Field Games (\cite{HCM12, FHQ19}); the LQG Mean Field Games with different model settings (\cite{Bar12, HH13, BPF13, HLW15}); and LQG Graphon Mean Field Games (\cite{GCH20}). 
Recently, LQ Mean Field Games with a Brownian motion as the common noise have also been studied in  (\cite{ahuja2015mean, tchuendom2018}) with restrictions of the dependence of measure on its mean alone. Moreover, some literature considers various topics of  Mean Field control and game problems with Markov chain common noise, 
see \cite{LXZ22, NNY20, NYH20}.
	
A fundamental question in this regard is the convergence rate of $N$-player game to the desired MFG system. A well-known result is about the convergence rate of value functions of the generic player, which can be shown $O(N^{-1})$, see  for instance \cite{Car10, CDLL19, carmona2018, HY21}. In particular, \cite{HY21} establishes the convergence rate of value functions in the sense of 
$$J_1^N(\hat \alpha_1, \hat \alpha_{-1}) \le J_1^N(\alpha_1, \hat \alpha_{-1}) + O(N^{-1}), $$
where $J_1^N$ is the value of the first player in $N$-player game and $\hat \alpha$ is the Nash equilibrium decentralized control process for the Mean Field Game problem. 

In contrast, the convergence rate of equilibrium measures is another challenging question due to the complication of the correlation structures among $N$ players. 
To be more concrete, we examine the behavior of the $\hat X_{it}^{(N)}$, who represents the equilibrium state of the $i$-th player at time $t$ in the $N$-player game defined within the probability space $\left(\Omega^{(N)}, \mathcal F^{(N)}, \mathbb F^{(N)}, \mathbb P^{(N)} \right)$. Additionally, we denote $\hat X_t$ as the equilibrium path at time $t$ derived from the associated MFG defined in the probability space $(\Omega, \mathcal F, \mathbb F, \mathbb P)$.
The question pertains to the convergence of $\hat X_{1t}^{(N)}$ as follows:
\begin{itemize}
\item [(Q)]
The $\mathbb W_p$-convergence rate of the representative equilibrium path, 
$$\mathbb W_p \left(\mathcal L \left(\hat X_{1t}^{(N)} \right), \mathcal L \left(\hat X_t \right) \right) 
	= O \left(N^{-?} \right).$$
Here, $\mathbb W_p$ denotes the $p$-Wasserstein metric.
\end{itemize}

The existing literature extensively explores the convergence rate in this context. For (Q), Theorem 2.4.9 of the monograph \cite{CDLL19} establishes a convergence rate of $O(N^{-1/2})$ using the $\mathbb W_1$ metric. More recently, \cite{JT23} addresses (Q) by introducing displacement monotonicity and controlled common noise, and Theorem 2.23 applies the maximum principle of forward-backward propagation of chaos to achieve the same convergence rate. It is important to note that these results are not applicable to the Linear Quadratic Gaussian (LQG) framework, primarily due to the assumption concerning the linear growth of the cost functional.
%Note that, the above results does not apply to LQG framework due to the assumption on the cost functional having  linear growth.

%\textcolor{blue}{
The main result of this paper establishes that the equilibrium measures exhibit a convergence rate of $1/2$ concerning the 2-Wasserstein distance. The precise statement of this result can be found in Theorem \ref{t:main02}. In comparison to the aforementioned literature, two primary distinctions emerge. Firstly, within the framework of Mean Field Games, the common noise is modeled as a Continuous-Time Markov Chain. Secondly, a significant difference lies in the cost function's behavior, as it does not possess linear growth within the context of the Linear Quadratic Gaussian (LQG) framework.
%}

To obtain the desired convergence rate in this paper, the first building block is the characterization of the equilibrium measure of the limiting MFG by a finite-dimensional ODE system. The key step leading us to a desired finite-dimensional system is that, instead of searching for the infinite-dimensional function directly, we postulate a Markovian structure via auxiliary processes \eqref{eq:mu_sigma} governed by its finite-dimensional coefficient functions, which exhibits the distinct feature of Markov chain common noise relatives to the Brownian motion counterpart.

The next stage towards the convergence rate is to compare the limiting MFG system to a $N$-player game. 
In contrast to the characterization of the MFG system, it is relatively routine to solve the $N$-player game due to its LQ structure.
Therefore, the convergence rate problem can be recasted to the following question about a coupling of the two following processes: 
For two equilibrium processes $\hat X$ of MFG in $\Omega$ and $\hat X_1^{(N)}$ of $N$-player game in $\Omega^{(N)}$, 
finding a random process $Z^N$ in $\Omega$ whose distribution is identical to $\hat X_1^{(N)}$ satisfying the estimate in the form of
 %\begin{center}
$\mathbb E [|\hat X_t - Z^N_t |^2] = O \left(N^{-?} \right).$     
% \end{center}
% \textcolor{blue}{
For this purpose, we first show an  $N$-invariant algebraic structure of the seemingly intractable $\kappa N^3$ dimensional ODE system \eqref{eq:ABC}, which originated from \cite[Huang and Yang]{HY21} as a dimensional reduction in the system with Brownian common noise. Thanks to this $N$-invariant structure, the complex ODE system \eqref{eq:ABC} can be reduced to the ODE system \eqref{eq:a_12} whose dimension agrees with the ODE \eqref{eq:ode1} of MFG system. Moreover,  $\hat X_1^{(N)}$ can be represented as a stochastic flow driven by two Brownian motions $W_1^{(N)}$ and $W_{-1}^{(N)}:=\frac 1 {\sqrt{N-1}} \sum_{i=2}^{N} W_i^{(N)}$, which enables us to embed the equilibrium process $\hat X_1^{(N)}$ to any probability space having only two Brownian motions. 
%}
	
The rest of this paper is outlined as follows:
Section \ref{s:section2} presents a precise formulation of the problem and two main results. Section \ref{s:section3} is devoted to the derivation of our first result: the equilibrium of MFGs. In Section \ref{s:section4}, we show in detail the convergence of the $N$-player game to MFGs, which yields our second main result. Section \ref{s:section6} demonstrates the convergence by some numerical examples. 
%\textcolor{blue}{
The conclusion and some possible future works are summarised in Section \ref{s:conclusion}.
%}  
Section \ref{s:appendix} is an appendix that collects some related facts to support our main theme.

	%%%%%%%%%%%%%%%%%%%%%%%%%%%%%%%%%%%%%%%%%%%%%%%%%%%

	\section{Problem setup and Main results}
	\label{s:section2}
	
	First, we collect common notations used in this paper in Subsection \ref{s:notation}.
	Then, we set up problems on
	MFGs and the $N$-player game separately in  Subsections \ref{s:mfg} and \ref{s:n-player}. 
The main results are presented in 	Subsection \ref{s:main} and some interpretations of our main results are added in Subsection \ref{s:remarks on main}. 
	
	\subsection{Notations} \label{s:notation}
		Let $T>0$ be a fixed terminal time and $(\Omega, \mathcal F_T, \mathbb F = \{\mathcal F_t: 0\le t\le T\}, \mathbb P)$ be a completed filtered probability space satisfying the usual conditions, on which
	$W$ and $B$ are two independent standard Brownian motions, and $Y$ is a continuous time Markov chain (CTMC) independent of $(W, B)$ taking values in a finite state space $\mathcal Y = \{1, 2, \dots, \kappa\}$ with a generator 
	\begin{equation}
	\label{eq:Y}
	    Q = (q_{i,j})_{i,j \in \mathcal Y}
	\end{equation} 
	satisfying $q_{i,j} \geq 0$ for all $i \neq j \in \mathcal Y$ and $\sum_{i \neq j} q_{i,j} + q_{i,i} = 0$ for each $i \in \mathcal Y$.
In the above, 
	%$Y$ and $(W, B)$ are independent. 
	%We also reserve the symbol $u^N$ as the random variable in $(\Omega, \mathcal F_T, \mathbb P)$ uniformly distributed on the set $\{1, 2, \ldots, N\}$, which is independent of $(Y, W, B)$. 
	the Brownian motion $B$ does not play any role in MFG problem formulation until the convergence proof of the $N$-player game to MFGs.

	By $L^p := L^p(\Omega, \mathbb P)$, we denote the space of random variables $X$ on 
	$(\Omega,  \mathcal F_T, \mathbb P)$ with finite $p$-th moment with norm $\|X\|_p = (\mathbb E \left[|X|^p \right])^{1/p}$. 
	We also denote by
	$L_{\mathbb F}^p:=L_{\mathbb F}^p([0, T] \times \Omega)$ the space of all $\mathbb F$-progressively measurable random processes $\alpha = (\alpha_t)_{0 \leq t \leq T}$ satisfying 
	$$\mathbb E \left[ \int_0^T |\alpha_t|^p dt \right] <\infty.$$
	
	For any polish (complete separable metric)  
	space $(P, \mathcal B(P), d)$, we use $\delta_x$ to denote the Dirac measure on the point $x\in P$.
	Then, the collection of all probabilities $m$ on $(P, \mathcal B(P), d)$
	 having finite $k$-th moment is denoted by $\mathcal P_k(P)$, i.e. 
	$$[m]_k := \int x^k m(dx) <\infty, \quad \forall m\in \mathcal P_k(P).$$

	The equilibrium of MFGs with the common noise yields the conditional distribution. For real-valued random variables 
	$X$ and $Z$ in $(\Omega, \mathcal F_T, \mathbb P)$, we denote the distribution of $X$ conditional on $\sigma(Z)$ by $\mathcal L(X|Z)$, or equivalently
	$$\mathcal L(X|Z) (A) = \mathbb E [ I_A(X) |Z ], \quad \forall A \in \mathcal F_T.$$
	Note that $\mathcal L(X|Z) (A): \Omega \mapsto \mathbb R$ is a $\sigma(Z)$-measurable random variable, 
	therefore, $\mathcal L(X|Z)$ is $\sigma(Z)$-measurable random probability distribution with $k$-th moment 
	$[\mathcal L(X|Z)]_k = \mathbb E[X^k|Z]$, if it exists. 
	We refer to more details on the conditional distribution in Volume II of \cite{carmona2018}.
	The next proposition provides an embedding approach to prove a convergence in distribution, 
	which will be used later in the convergence of the $N$-player game to MFGs.
	\begin{proposition}
		\label{p:conv}
		Suppose $(\Omega^{(N)}, \mathcal F_T^{(N)}, \mathbb P^{(N)})$ is a complete probability space. Let 
		$X^{(N)}$ and $X$ be random variables of $\Omega^{(N)} \mapsto P$ and $\Omega \mapsto P$, respectively. 
		Then, $X^{(N)}$ is convergent in distribution to $X$, denoted by $X^{(N)} \Rightarrow X$, if
		there exists $Z^{N}: \Omega \mapsto P$ 
		satisfying $\mathcal L(Z^{N}) = \mathcal L(X^{(N)})$, such that $Z^N \to X$ holds almost surely, i.e.
		$$\lim_{N\to \infty} d(Z^{N}, X) = 0,  \hbox{ almost surely in } \mathbb P,$$
		where $d$ represents the metric assigned to the space $P$.
	\end{proposition}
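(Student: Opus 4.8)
The plan is to reduce the statement to two elementary facts: that convergence in distribution depends only on the laws of the random variables involved, and that almost sure convergence implies convergence in distribution. Recall that, since $P$ is Polish, a sequence of $P$-valued random variables converges in distribution to $X$ precisely when $\mathbb E[f(\cdot)] \to \mathbb E[f(X)]$ for every bounded continuous $f\colon P \to \mathbb R$; the class of bounded continuous functions is a determining class for Borel probability measures on $P$, so this is the formulation of ``$\Rightarrow$'' we work with.

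First I would fix an arbitrary bounded continuous $f\colon P \to \mathbb R$ and use the hypothesis $\mathcal L(Z^{N}) = \mathcal L(X^{(N)})$. Because $f$ is Borel measurable and bounded, the transport (change-of-variables) formula applied to the common pushforward measure on $P$ gives $\mathbb E^{(N)}[f(X^{(N)})] = \mathbb E[f(Z^{N})]$ for every $N$. Hence it suffices to prove that $\mathbb E[f(Z^{N})] \to \mathbb E[f(X)]$, i.e.\ the whole problem is transferred onto the single space $(\Omega,\mathcal F_T,\mathbb P)$ where the approximating sequence $Z^N$ and the limit $X$ now live.

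Second I would invoke the assumption $d(Z^{N},X) \to 0$ almost surely in $\mathbb P$ together with continuity of $f$ to deduce $f(Z^{N}) \to f(X)$ almost surely. Writing $M := \|f\|_\infty < \infty$, the constant $M$ is integrable on the finite measure space $(\Omega,\mathcal F_T,\mathbb P)$, so the dominated convergence theorem yields $\mathbb E[f(Z^{N})] \to \mathbb E[f(X)]$. Chaining this with the identity from the previous step gives $\mathbb E^{(N)}[f(X^{(N)})] \to \mathbb E[f(X)]$ for every bounded continuous $f$, which is exactly $X^{(N)} \Rightarrow X$.

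The argument is essentially routine, being the easy direction of the Skorokhod-representation circle of ideas, so I do not expect a genuine obstacle. The only points that deserve a line of care are (i) checking that $\omega \mapsto d(Z^{N}(\omega),X(\omega))$ is $\mathcal F_T$-measurable, so that the phrase ``almost surely'' is meaningful — this holds since $d\colon P\times P \to \mathbb R$ is continuous and $(Z^{N},X)$ is measurable into $P\times P$ — and (ii) recording that $P$ Polish ensures bounded continuous functions determine laws, so testing against that class is equivalent to weak convergence. One could equally run the proof through the portmanteau theorem (testing against closed sets and using $\liminf \mathbb P(Z^N \in \cdot)$-type inequalities), but the bounded-continuous-function route is the cleanest.
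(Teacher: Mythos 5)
Your proof is correct and complete: transferring the expectation via the equality of laws, then applying almost sure convergence, continuity, and bounded convergence is exactly the standard argument, and your measurability remark (using separability of $P$ so that $(Z^N,X)$ is jointly measurable into $P\times P$) covers the only subtle point. The paper itself states this proposition without proof, treating it as a routine fact, so there is nothing to compare against; your write-up supplies precisely the argument the authors omitted.
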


	In this paper, we  formulate the $N$-player game in the completed filtered probability space
	$$(\Omega^{(N)}, \mathcal F_T^{(N)}, \mathbb F^{(N)} := \{\mathcal F_t^{(N)}: 0\le t\le T\}, \mathbb P^{(N)}),$$
	and $Y^{(N)}$ 
	 is the continuous time Markov chain  
	in $\Omega^{(N)}$ with the same generator given by \eqref{eq:Y} and 
	$W^{(N)} = (W^{(N)}_{i}: i = 1, 2, \dots, N)$
	is an $N$-dimensional standard Brownian motion. We assume $Y^{(N)}$  and 	
	$W^{(N)}$ are independent of each other.

	For better clarity, we use the superscript $(N)$ for a random variable to emphasize the probability space 
	$\Omega^{(N)}$ it belongs to. 
	For example, Proposition \ref{p:conv} denotes a random variable in $\Omega^{(N)}$ by
	$X^{(N)}$, while its distribution copy in $\Omega$ by $Z^{N}$, but not by $Z^{(N)}$.

%%%%%%%%%%%%%%%%%%%%%	

	\subsection{The equilibrium of MFGs}\label{s:mfg}
	In this section, we define the equilibrium of MFGs associated with a generic player's stochastic control problem in the probability setting $\Omega$, see Section \ref{s:notation}.
	
	Given a random measure flow $m: (0, T]\times \Omega \mapsto \mathcal P_2(\mathbb R)$, consider a generic player who wants to minimize her expected accumulated cost on $[0, T]$: 
	\begin{equation}
		\label{eq:total cost}
		\begin{array}{ll}
			J (y, x, \alpha)  = \displaystyle
			\mathbb E \left[  \int_0^T 
			%\textcolor{blue}{
			\left( \frac{1}{2} \alpha_s^2 + F(Y_s, X_s, m_{s}) \right) 
			%}
			ds %\right.
			%\\ & \displaystyle \hspace{0.5in} \left. \left. 
			+ G(Y_T, X_T, m_{T}) 
			\Big |  Y_0 = y, X_0 = x
			\right]
		\end{array}
	\end{equation} 
	with some given cost functions $F, G: \mathcal Y \times \mathbb R\times \mathcal P_2(\mathbb R) \mapsto \mathbb R$ and underlying random processes 
	$(Y, X): [0, T]\times \Omega \mapsto \mathcal Y \times \mathbb R$.
	Among three processes $(Y, X, m)$, the generic player can control the process $X$ via $\alpha$ in the form of
	\begin{equation}\label{eq:X}
		X_t = X_0 + \int_0^t \left( \tilde{b}_1(Y_s, s) X_s + \tilde{b}_2(Y_s, s)\alpha_s \right) ds + W_t, \quad \forall t \in [0, T],
	\end{equation}
	where $\tilde{b}_1(\cdot, \cdot)$ and $\tilde{b}_2(\cdot, \cdot)$ are two deterministic functions.
	We assume that  the initial state $X_0$ is independent of $Y$. 
	%\textcolor{blue}{
	The Brownian motion $W$ is the individual noise of the generic player,
	%} 
	the process $Y$ of \eqref{eq:Y} represents the common noise, and $m=(m_t)_{0\le t \le T}$ is a given random density flow normalized up to total mass one.
	
	The objective of the control problem for the generic player is to find its optimal control $\hat \alpha \in \mathcal A := L^4_{\mathbb F}$  to minimize the total cost, i.e.
	\begin{equation}
		\label{V_m}
		V[m](y, x) = J[m](y, x, \hat \alpha) \le J[m](y, x, \alpha), \quad \forall \alpha \in \mathcal A.
	\end{equation}
	Associated with the optimal control $\hat \alpha$, we denote the optimal path by 
	$\hat{X}=(\hat X_t)_{0\le t\le T}$. To introduce MFG Nash equilibrium, 
	it is often convenient to highlight the dependence of  
	the optimal path and optimal control of the generic player and its associated value on the underlying density  flow $m$, which are denoted by 
	$$\hat X_t[m], \hat \alpha_t [m],  \hbox{ and } V[m],$$ respectively.
	% We emphasize that, in contrast to $\hat X[m](y, x)$, $\hat X[m]$ in the above is the optimal path for the generic player with random initial $(Y_0, X_0)$.
	Now, we present the definition of the equilibrium below, see also  Volume II-P127 of \cite{carmona2018} 
	for a general setup with a common noise.
	\begin{definition}
		\label{d:ne}
		Given an initial distribution  $\mathcal L(X_0) = m_0 \in \mathcal P_2(\mathbb R)$, 
		a random measure flow $\hat m = \hat m(m_0)$ 
		is said to be an MFG equilibrium measure if it satisfies the fixed point condition
		\begin{equation}
			\label{eq:mhat}
			\hat m_t = \mathcal L(\hat X_t[\hat m]| Y), \ \forall 0 < t \le T, \ \hbox{ almost surely in } \mathbb P.
		\end{equation}
		The path $\hat X$ and the control $\hat \alpha$  associated to $\hat m$ 
		is called the MFG equilibrium path and equilibrium control, respectively.
		The value function of the control problem associated with the equilibrium measure $\hat m$ is 
		called as MFG value function, denoted by
		\begin{equation}
			\label{eq:U}
			U(m_0, y, x) = V[\hat m] (y, x).
		\end{equation}
	\end{definition}
	
	\begin{figure}[h]
	\centering
		\includegraphics[width= .5\textwidth]{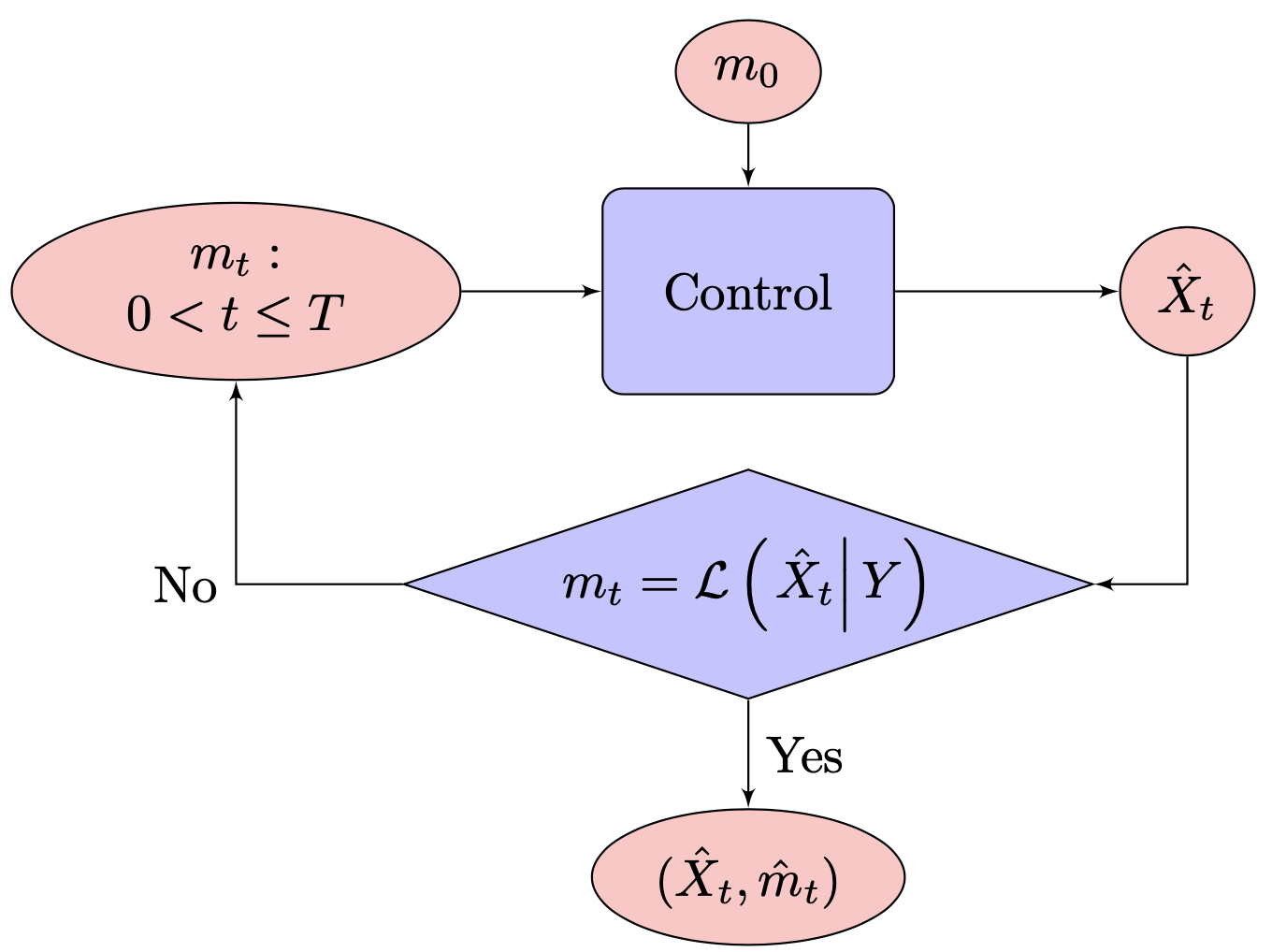}
		\caption{MFGs diagram.}
		\label{fig:MFG1}
	\end{figure}

	The flowchart of MFGs diagram is given in Figure \ref{fig:MFG1}. 
	It is noted from the optimality condition \eqref{V_m} and the fixed point condition \eqref{eq:mhat}  that 
	$$J[\hat m](y, x, \hat \alpha) \le J[\hat m](y, x, \alpha), \quad \forall \alpha$$
	holds for the equilibrium measure $\hat m$ and its associated equilibrium control $\hat \alpha$, 
	while it is not
	$$J[\hat m](y, x, \hat \alpha) \le J[m](y, x, \alpha), \quad \forall \alpha, m.$$
	Otherwise, this problem turns into a McKean-Vlasov control problem discussed in \cite{NNY20}.
	%\textcolor{blue}{
	Furthermore, it's important to note that the Continuous-Time Markov Chain  $Y$ serves a role as common noise. 
	This is due to the fact that the mean field term is conditioned on the distribution of $Y$.
	%}

	%%%%%%%%%%%%%%%%%

	\subsection{Equilibrium of the $N$-player game} \label{s:n-player}
	The discrete counterpart of MFGs is an $N$-player game, which is formulated below in the probability space $\Omega^{(N)}$, see Section \ref{s:notation} for more details on the probability setup. 
	
	Recall that, $W_{it}^{(N)}$ and $W_{jt}^{(N)}$ are independent Brownian motions for $j \neq i$ 
	%\textcolor{blue}{
	and they are called individual noises in the $N$-player game.
	%} 
	The common noise $Y^{(N)}$ is the continuous time Markov chain in $\Omega^{(N)}$ with the generator given by \eqref{eq:Y}. 
	Let the player $i$ follow the dynamic, 
	for $i = 1, 2, \dots, N$,
	\begin{equation}
		\label{eq:Xi}
		dX_{it}^{(N)} = \left(\tilde{b}_1(Y_t^{(N)}, t) X_{it}^{(N)} + \tilde{b}_2(Y_t^{(N)}, t) \alpha_{it}^{(N)} \right) dt + dW_{it}^{(N)}, \quad X_{i0}^{(N)} = x^{(N)}_i.
	\end{equation}
	%In the above, the initial state is denoted by $x^N_i$ instead of $x^{(N)}_i$, since $x^N_i$ 	is independent of the choice of the sample $\omega^{(N)} \in \Omega^{(N)}$ as a constant.

	The cost function for player $i$
	associated to the control $\alpha^{(N)} = (\alpha_i^{(N)}: i = 1, 2, \dots, N)$ is 
	\begin{equation}\label{eq:cost_N}
		\begin{aligned}
			J_i^{N}(y, x^{(N)}, \alpha^{(N)}) 
			&= \mathbb E\left[\int_0^T 
			\left(\frac{1}{2}|\alpha_{it}^{(N)}|^2 
			+ F(Y_t^{(N)}, X_{it}^{(N)}, \rho(X_t^{(N)})) \right) dt +  
			\right. \\ 
			& \hspace{1in} 
			\left. 
			G(Y_T^{(N)}, X_{iT}^{(N)}, \rho(X_T^{(N)}))
			\Big| X_{0}^{(N)} = x^{(N)}, Y_0^{(N)} = y  \right],
		\end{aligned}
	\end{equation}
	where $x^{(N)} = (x_1^{(N)}, x_2^{(N)}, \dots, x_N^{(N)})$ is an $\mathbb R^N$-valued random vector in $\Omega^{(N)}$ to denote
	 the initial state for $N$ player, $\alpha_i^{(N)} \in \mathcal A^{(N)} := L_{\mathbb F^{(N)}}^4$, and 
	$$\rho(x^{(N)}) = \frac{1}{N} \sum_{i=1}^N \delta_{x_i^{(N)}}$$ 
	%$\mathcal \varepsilon$
	is the empirical measure of a vector $x^{(N)}$ with Dirac measure $\delta$.
	We use the notation for the control
	$\alpha^{(N)} = (\alpha_{i}^{(N)}, \alpha_{-i}^{(N)}) = (\alpha_1^{(N)}, \alpha_2^{(N)}, \ldots, \alpha_N^{(N)})$.
	
	\begin{definition}\label{d:neN}
		\begin{enumerate}
			%\item
			\item The value function of player $i$  for $i = 1, 2, \ldots, N$ of the Nash game is defined by
			$V^N = (V^N_i: i = 1, 2, \ldots, N)$ satisfying the equilibrium condition
			\begin{equation}
				\label{eq:value_i}
				\begin{array}
					{ll}
					V_i^{N}(y, x^{(N)}) & = J_i^{N} (y, x^{(N)}, \hat \alpha_i^{(N)}, \hat \alpha_{-i}^{(N)}) 
				   \le J_i^{N}(y, x^{(N)}, \alpha_i^{(N)}, \ \hat \alpha_{-i}^{(N)}), \quad \forall 
					\alpha_i^{(N)} \in \mathcal A^{(N)}.
				\end{array}
			\end{equation}
			
			\item
			The equilibrium path of the $N$-player game is the random path 
			$\hat X_t^{(N)} = (\hat X_{1t}^{(N)}, X_{2t}^{(N)}, \dots, \hat X_{Nt}^{(N)} )$ driven by 
			\eqref{eq:Xi} associated to the control $\hat \alpha_t^{(N)}$ 
			satisfying the equilibrium condition of \eqref{eq:value_i}.

		\end{enumerate}
	\end{definition}

	%%%%%%%%%%%%%%%%
	
		\subsection{The main result with quadratic cost structures}
	\label{s:main}
	We consider the following two functions
	$F, G: \mathcal Y \times \mathbb R\times \mathcal P_2(\mathbb R) \mapsto \mathbb R$ in the cost functional \eqref{eq:total cost}: 
	\begin{equation}
		\label{eq:running cost}
		F(y, x, m) = h(y) \int_{\mathbb R} (x-z)^2 m(dz),
	\end{equation}
	and
	\begin{equation}
		\label{eq:term cost}
		G(y, x, m) = g(y) \int_{\mathbb R} (x-z)^2 m(dz),
	\end{equation}
	for some $h, g: \mathcal Y \mapsto \mathbb R^+$. In this case, the  $F$ and $G$  terms
	in \eqref{eq:cost_N}  of the
	$N$-player game can be written by
	$$
	F(Y_t^{(N)}, X_{it}^{(N)}, \rho(X_t^{(N)}))  = 
	\frac{h(Y_t^{(N)})}{N}\sum_{j = 1}^N(X_{it}^{(N)}-X_{jt}^{(N)})^2, $$
	and
	$$
	G(Y_T^{(N)}, X_{iT}^{(N)}, \rho(X_T^{(N)})) = 
	\frac{g(Y_T^{(N)})}{N}\sum_{j = 1}^N(X_{iT}^{(N)}-X_{jT}^{(N)})^2,
	$$
	respectively. 

 \begin{remark}
    First, we note that $F$ and $G$ possess the quadratic structures in $x$. Secondly, 
	the coefficients $h(y)$ and $g(y)$ provide the sensitivity to the mean field effects, which depend on the current CTMC state. For another remark, let us consider the scenario where the number of states is $2$ and sensitivities are invariant, say
	$$h(0) = h(1) = h, \ g(0) = g(1) = 0.$$ 
	Then the cost function and hence the entire problem is free from the common noise. 
	Interestingly, as shown in the Appendix \ref{s:no_common}, 
	there is no global solution for MFGs when $h<0$, while there is a global solution when $h>0$.

    Moreover, the uniqueness of Mean Field Game can be achieved under the displacement monotonicity condition. It is easy to check that \eqref{eq:running cost}-\eqref{eq:term cost} %\textcolor{blue}{
    satisfy
    %} 
    the displacement monotonicity condition. Note that
    $$F_x(y, x, m) = 2h(y) (x - [m]_1), \quad G_x(y, x, m) = 2g(y) (x - [m]_1),$$
    which gives that
    \begin{equation*}
        \mathbb E \left[\left(F_x (y, X_1, m_{X_1}) - F_x (y, X_2, m_{X_2}) \right) (X_1 - X_2) \right] 
        = 2h(y) \left(\mathbb E \left[ \left(X_1 - X_2 \right)^2 \right] - \left(\mathbb E[X_1] - \mathbb E[X_2] \right)^2 \right) \geq 0
    \end{equation*}
    for all $y \in \mathcal Y$ if $h > 0$ on $\mathcal Y$, where $m_{X_1}$ and $m_{X_2}$ is the law of $X_1$ and $X_2$ respectively. Similarly, we can obtain that 
    $$\mathbb E \left[\left(G_x (y, X_1, m_{X_1}) - G_x (y, X_2, m_{X_2}) \right) (X_1 - X_2) \right] \geq 0$$
    for all $y \in \mathcal Y$ if $g > 0$ on $\mathcal Y$. Therefore, we require positive values for all sensitivities for simplicity. It is of course an interesting problem to investigate the explosion 
	when some sensitivities are negative.

	%Another simplification for the model is to assume $\mathbb E[X_0] = 0$. This is always possible by centralizing the initial distribution by the coordinate shift. 
 \end{remark}

	Wrapping up the above discussions, we impose the following assumptions: 
	\begin{itemize}
	    \item [(A0)]
		$\tilde{b}_1(y, \cdot), \tilde{b}_2(y, \cdot): [0, T]  \mapsto  \mathbb R$ 
		are continuous functions for all $y\in \mathcal Y$.
		\item [(A1)]
		The cost functions are given by \eqref{eq:running cost}-\eqref{eq:term cost} with 
		$h, g>0$; The initial $X_0$ of MFGs satisfies % $\mathbb E [X_0] = 0$ and 
		$\mathbb E[X_0^2]<\infty$.
		\item
		[(A2)] In addition to (A1), 
		the initial $x^{(N)} = (x_1^{(N)}, x_2^{(N)}, \dots, x_N^{(N)})$
		of the $N$-player game is a vector of i.i.d. random variables in $\Omega^{(N)}$ with the same distribution 
		as the initial  $\mathcal L(X_0)$ of MFG.
	\end{itemize}
	Our objective for this paper is to understand
	the Nash equilibrium of MFGs and its connection to 
	the $N$-player game equilibrium:
	\begin{itemize}
		\item [(P1)]
		With 
		%\textcolor{blue}{
		Assumptions
		%} 
		(A0), (A1), and (A2), obtain the convergence rate of 
		$(\hat X_{1t}^{(N)}, Y^{(N)})$ from the $N$-player game of Definition \ref{d:neN} 
		to $(\hat X_t, Y)$ from MFGs of Definition \ref{d:ne} in distribution.
	\end{itemize}
	%\end{itemize}
To answer (P1), it is critical to have a solid understanding of the joint distribution $(\hat X_t, Y)$ for the underlying MFG, which yields 
another question:
\begin{itemize}
%\item
		\item [(P2)]
		With 
		%\textcolor{blue}{
		Assumptions
		%} 
		(A0) and (A1), 
		characterize the MFG equilibrium path $\hat X$, 
		as well as associated equilibrium measure $\hat m$ along the Definition \ref{d:ne};
\end{itemize}
%%%%%%%%%%%%%%%
	
For our first main result, we first answer (P2) via the following Riccati system for unknowns 
$(a_y, b_y, c_y, k_y : y \in \mathcal Y)$:
	\begin{equation}
		\label{eq:ode1}
		\begin{aligned}
			\begin{cases}
				\displaystyle a_y' + 2\tilde{b}_{1y} a_y - 2\tilde{b}^2_{2y} a_y^2 + \sum_{i=1}^{\kappa} q_{y,i} a_i + h_y = 0, \\
				\displaystyle b_y' + \left(2 \tilde{b}_{1y} - 4\tilde{b}^2_{2y} a_y \right) b_y + \sum_{i=1}^{\kappa} q_{y,i} b_i + h_y  =0, \\
				\displaystyle c_y' + a_y + b_y + \sum_{i=1}^{\kappa} q_{y,i} c_i  = 0,\\
				\displaystyle k_y' -2\tilde{b}^2_{2y} a_y^2 + 4 \tilde{b}^2_{2y} a_y b_y + 2 \tilde{b}_{1y} k_y + \sum_{i=1}^{\kappa} q_{y,i} k_i = 0, \\
				\displaystyle a_y(T) = b_y(T) = g_y\text{ , } c_y(T) =  k_y(T) = 0, 
			\end{cases}
		\end{aligned}
	\end{equation}
	where $h_y = h(y), \, g_y = g(y)$ for $y \in \mathcal Y$. 
	%\textcolor{blue}{
	Next, we present our first main result about the equilibrium path, the equilibrium control, and the value function in MFG.
	%}

	\begin{theorem}[MFG]
		\label{t:main}
		Under  (A0)-(A1), there exists a unique solution 
		$(a_y, b_y, c_y, k_y: y \in \mathcal Y)$ for the Riccati system \eqref{eq:ode1}. 
		With these solutions,
		the MFG equilibrium path $\hat X = \hat X [\hat m]$ is given by
		\begin{equation}
			\label{eq:Xhat03}
			d \hat X_t = \left( \tilde{b}_1(Y_t, t) \hat{X}_t - 2 \tilde{b}_2^2(Y_t, t)  a_{Y_t}(t) \left(\hat X_t - \hat{\mu}_t \right) \right)  dt + dW_t, \quad \hat X_0 = X_0,
		\end{equation}
		with equilibrium control
		\begin{equation}
			\label{eq:alpha011}
			\hat \alpha_t = - 2 \tilde{b}_2(Y_t, t) a_{Y_t}(t) \left(\hat X_t - \hat{\mu}_t \right),
		\end{equation}
		where 
		$$d \hat{\mu}_t = \tilde{b}_1(Y_t, t) \hat{\mu}_t dt, \quad \hat{\mu}_0 = \mathbb E[X_0].$$
		Moreover, the value function $U$ %starting from $X_0 \sim m_0$ of the generic player starting from $X_0 =  x$ and $Y_0 = y$
		is %given by 
		$$U (m_0, y, x) =a_y(0) x^2 {-2a_y(0)x[m_0]_1 + k_y(0)[m_0]_1^2 }+ b_y(0) [m_0]_2 + c_y(0), \quad  y \in \mathcal Y.$$
		%Conversely, if there exists an equilibrium, then it must has the above form with some solution of the Riccati system.
	\end{theorem}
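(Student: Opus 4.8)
The plan is a construct-and-verify scheme. First I would solve the Riccati system \eqref{eq:ode1}. It has a cascade structure: the $\kappa$ equations for $(a_y)_{y\in\mathcal Y}$ form a closed coupled backward Riccati system; once $(a_y)$ is known the equation for $(b_y)$ is linear in $b$, then $(k_y)$ is linear in $k$, and finally $(c_y)$ is linear in $c$. Hence it suffices to solve the $a$-subsystem on all of $[0,T]$, after which standard linear ODE theory gives unique global $C^1$ solutions $b,k,c$ (their coefficients being continuous by (A0) and the boundedness of $a$). For $(a_y)$, local existence and uniqueness follow from local Lipschitz continuity; to exclude finite-time blow-up I would prove the a priori bound $0\le a_y(t)\le C$. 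The lower bound $a_y\ge 0$ is an invariant-region argument: at a would-be first zero of $a_y$ read backward from $T$, with $a_i\ge 0$ for all $i$, the right-hand side equals $-\sum_i q_{y,i}a_i-h_y\le 0$ since $q_{y,i}\ge 0$ for $i\ne y$, $q_{y,y}\le 0$, and $h_y>0$, so $a_y$ cannot cross $0$. The upper bound then follows by a backward Gronwall estimate: from $a_y(t)=g_y+\int_t^T(-a_y'(s))\,ds$, dropping the nonpositive term $-2\tilde b_{2y}^2a_y^2$ and using $q_{y,y}a_y\le 0$ and the boundedness of $\tilde b_1,\tilde b_2,h$, one gets $\sum_y a_y(t)\le C+C\int_t^T\sum_i a_i(s)\,ds$, hence $\sum_y a_y(t)\le Ce^{CT}$ on $[0,T]$. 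These bounds extend the local solution to $[0,T]$, and the cascade completes the first claim.

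Next I would build the candidate equilibrium. With $(a,b,c,k)$ in hand, define $\hat\mu$ and $\hat X$ as in the statement and set $\hat m_t:=\mathcal L(\hat X_t\mid Y)$. Taking $\mathcal L(\cdot\mid Y)$ in \eqref{eq:Xhat03} and using $X_0\perp Y$ and $W\perp Y$, the process $\mathbb E[\hat X_t\mid Y]-\hat\mu_t$ solves a linear homogeneous ODE starting from $0$, so $[\hat m_t]_1=\hat\mu_t$. It remains to verify that $\hat\alpha$ in \eqref{eq:alpha011} is the best response of the generic player facing the measure flow $\hat m$, with optimal path $\hat X$; the fixed-point relation \eqref{eq:mhat} then holds by the very definition of $\hat m$ and pathwise uniqueness for the linear SDE \eqref{eq:Xhat03}. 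The key reduction is $\int_{\mathbb R}(x-z)^2\hat m_t(dz)=(x-\hat\mu_t)^2+\mathrm{Var}(\hat m_t)$, where $\mathrm{Var}(\hat m_t)$ does not depend on the player's own control; therefore, after centering $\tilde X_t:=X_t-\hat\mu_t$ (so $d\tilde X_t=(\tilde b_1(Y_t,t)\tilde X_t+\tilde b_2(Y_t,t)\alpha_t)\,dt+dW_t$), the control-dependent part of $J[\hat m](y,x,\alpha)$ is exactly a regime-switching LQ regulator with state weights $h(Y_s)$ and $g(Y_T)$. A verification argument with $\psi(t,y,\xi)=a_y(t)\xi^2+\beta_y(t)$, where $\beta_y'+a_y+\sum_i q_{y,i}\beta_i=0$ and $\beta_y(T)=0$, plugged into the regulator's HJB equation recovers the $a$-line of \eqref{eq:ode1} (and the $\beta$-equation), shows $\psi$ is its value function, gives optimal feedback $\alpha=-2\tilde b_2(y,t)a_y(t)\xi$, and optimal centered dynamics $d\tilde X_t=(\tilde b_1-2\tilde b_2^2 a_{Y_t})\tilde X_t\,dt+dW_t$. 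Undoing the centering yields \eqref{eq:alpha011} and \eqref{eq:Xhat03}. Uniqueness of the equilibrium comes along for free: for any equilibrium the best response is again this regulator with $\hat\mu$ replaced by the true conditional mean, and taking $\mathcal L(\cdot\mid Y)$ in the fixed point forces that mean to solve $d\mu_t=\tilde b_1(Y_t,t)\mu_t\,dt$ with $\mu_0=[m_0]_1$, i.e.\ $\mu=\hat\mu$.

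Finally I would read off the value function. By definition $U(m_0,y,x)=J[\hat m](y,x,\hat\alpha)$, which splits into the regulator value $a_y(0)(x-[m_0]_1)^2+\beta_y(0)$ plus the variance cost $\Gamma(y):=\mathbb E[\int_0^T h(Y_s)V_s\,ds+g(Y_T)V_T\mid Y_0=y]$, where $V_s:=\mathrm{Var}(\hat X_s\mid Y)$ solves $V_s'=2(\tilde b_1-2\tilde b_2^2 a_{Y_s})V_s+1$ with $V_0=[m_0]_2-[m_0]_1^2$. Writing $V_s=V_s^{(1)}([m_0]_2-[m_0]_1^2)+V_s^{(0)}$ with $V_s^{(1)}=\exp(\int_0^s 2(\tilde b_1-2\tilde b_2^2 a_{Y_u})\,du)$ and $V_s^{(0)}=\int_0^s V_s^{(1)}/V_r^{(1)}\,dr$, a Feynman--Kac representation identifies the coefficient of $[m_0]_2-[m_0]_1^2$ in $\Gamma$ with $b_y(0)$, since its backward equation is precisely the $b$-line of \eqref{eq:ode1}. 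A Fubini swap rewrites the remaining constant $\mathbb E[\int_0^T h(Y_s)V_s^{(0)}\,ds+g(Y_T)V_T^{(0)}\mid Y_0=y]$ as $\mathbb E[\int_0^T b_{Y_s}(s)\,ds\mid Y_0=y]$, which together with $\beta_y(0)=\mathbb E[\int_0^T a_{Y_s}(s)\,ds\mid Y_0=y]$ gives $c_y(0)=\mathbb E[\int_0^T (a_{Y_s}+b_{Y_s})(s)\,ds\mid Y_0=y]$. Then the elementary identity $k_y=a_y-b_y$, valid because $a-b$ satisfies the same linear terminal-value problem as $k$, reorganizes $a_y(0)(x-[m_0]_1)^2+b_y(0)([m_0]_2-[m_0]_1^2)+c_y(0)$ into the announced formula.

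The hard part will be the global-in-time solvability in the first step: the quadratic term in the $a$-Riccati system is the only source of possible blow-up (backward in time), and it is exactly the sign hypotheses $h,g>0$ together with the generator structure of $Q$ that make the invariant-region lower bound, and hence the subsequent Gronwall upper bound, available. Everything downstream — the verification theorem for the LQ regulator, the conditional-moment computations, and the $k=a-b$ bookkeeping — is routine once those bounds and the Feynman--Kac representations are in place.
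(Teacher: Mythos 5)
Your proposal is correct in substance but follows a genuinely different route from the paper. The paper never centers the state: it restricts the search to measure flows whose conditional first and second moments $(\mu_t,\nu_t)$ are Markovian in $(Y_t,\mu_t,\nu_t)$ (the class $\mathcal M$ of Definition \ref{d:markov}), lifts the generic player's problem to the state $(y,x,t,\bar\mu,\bar\nu)$, posits the seven-coefficient quadratic ansatz \eqref{eq:v1}, derives the $13\kappa$-equation system \eqref{eq:ode2}--\eqref{eq:w12} from the HJB plus the fixed-point condition, and only then collapses it to \eqref{eq:ode1} via the algebraic identities $2a_y+f_y=0$, $d_y=e_y=0$ (proved by showing $l_y:=2a_y+f_y$ solves a homogeneous Riccati equation with zero terminal data). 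Your decomposition $\int(x-z)^2\hat m_t(dz)=(x-\hat\mu_t)^2+\mathrm{Var}(\hat m_t)$ together with the centering $\tilde X_t=X_t-\hat\mu_t$ (which works precisely because $d\hat\mu_t=\tilde b_1\hat\mu_t\,dt$ kills the exogenous drift) reduces everything to a scalar regime-switching LQ regulator governed by the $a$-equation alone, and you recover $b$, $c$, $k$ afterwards by Feynman--Kac representations of the control-independent variance cost and the identity $k_y=a_y-b_y$ (which is indeed valid: $a-b$ and $k$ solve the same linear terminal-value problem). This is shorter, avoids the class $\mathcal M$ and the auxiliary functions $w_i$ entirely, and makes the roles of $b$ (variance propagator) and $k$ transparent; the paper's heavier machinery has the advantage of exhibiting the full feedback structure $\hat\alpha=-\tilde b_2(2a_yx+d_y+f_y\mu)$ before simplification, which is the template reused for the $N$-player Riccati system in Section \ref{s:section4}. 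Your existence argument for the $a$-subsystem (invariant-region positivity plus backward Gr\"onwall) is essentially the paper's Lemmas \ref{l:existN}--\ref{l:bdN}, which merely package the same bounds through a truncated Banach fixed point.

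One caveat: your aside that ``uniqueness of the equilibrium comes along for free'' is not justified and is not claimed by the theorem. For a general $Y$-adapted mean flow $\mu$ that does not a priori satisfy $d\mu_t=\tilde b_1\mu_t\,dt$, the centered dynamics acquire the exogenous drift $\tilde b_1\mu_t\,dt-d\mu_t$, the best response picks up an affine correction, and closing the fixed point on the mean then requires the $d_y=0$ type argument (or a BSDE argument) rather than being automatic. The paper explicitly leaves equilibrium uniqueness open (Subsection \ref{s:remarks on main}); only uniqueness of the solution to \eqref{eq:ode1} is asserted, which your cascade argument does establish.
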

 
%\textcolor{blue}{
The proof of theorem \ref{t:main} is based on the Markovian structure of the equilibrium and the fixed point condition of the MFG problem, and it is provided in Subsection \ref{s:proof_of_main_result}. The next theorem establishes the convergence result and answers the problem (P1) with the convergence rate $\frac{1}{2}$.
%}

	\begin{theorem}[Convergence rate]
		\label{t:main02} 
		Under 
		%\textcolor{blue}{
		Assumptions
		%} 
		(A0)-(A1)-(A2), the joint law
		$(\hat X_{1t}^{(N)}, Y_t^{(N)})$ of the $N$-player game converges in distribution to that of the MFG equilibrium $(\hat X_t, Y_t)$ 
		for any $t\in (0, T]$ at the convergence rate 
		$$\mathbb W_2 \left(\mathcal L(\hat X_{1t}^{(N)}, Y_t^{(N)}), \mathcal L (\hat X_t, Y_t) \right) = 
	    O \left(N^{- \frac 1 2} \right), \quad \hbox{ as } N\to \infty.$$
	\end{theorem}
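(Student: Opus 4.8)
The plan is to build an explicit coupling of the $N$-player equilibrium path $\hat X_{1}^{(N)}$ with the MFG equilibrium path $\hat X$ of Theorem \ref{t:main} on a common space carrying only two Brownian motions, and then read off the $\mathbb W_2$ bound via Proposition \ref{p:conv} and a Gr\"onwall estimate. The first task is to solve the $N$-player game of Definition \ref{d:neN}: since \eqref{eq:Xi} is linear and \eqref{eq:cost_N} quadratic, each $V_i^N(y,\cdot)$ is a homogeneous quadratic form on $\mathbb R^N$ whose coefficients solve the $\kappa N^3$-dimensional Riccati system \eqref{eq:ABC}, globally on $[0,T]$ since $h,g>0$ in (A1) make every player's problem strictly convex. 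Two structural facts are decisive. First, the costs depend on the state only through the differences $X_{it}^{(N)}-X_{jt}^{(N)}$, and a common shift $x^{(N)}\mapsto x^{(N)}+c\mathbf 1$ of the initial data propagates through \eqref{eq:Xi} by the deterministic factor $e^{\int_0^t\tilde b_1(Y_s^{(N)},s)\,ds}$ and cancels out of every difference; hence $V_i^N(y,x^{(N)}+c\mathbf 1)=V_i^N(y,x^{(N)})$, so $\mathbf 1\cdot\nabla_x V_i^N\equiv 0$, and together with permutation symmetry among the players $j\neq i$ this forces the feedback to take the mean-reverting form $\hat\alpha_{it}^{(N)}=-2\tilde b_2(Y_t^{(N)},t)\,\beta^{(N)}_{Y_t^{(N)}}(t)\big(\hat X_{it}^{(N)}-\bar X_t^{(N)}\big)$, with $\bar X_t^{(N)}:=\frac1N\sum_{j=1}^N\hat X_{jt}^{(N)}$ and a scalar-per-state coefficient $\beta^{(N)}$. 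Second, the $N$-invariant algebraic pattern of \eqref{eq:ABC} (Table \ref{t:A}) collapses that huge system to the low-dimensional ODE \eqref{eq:a_12} governing $\beta^{(N)}$, which is an $O(1/N)$ perturbation of the MFG Riccati system \eqref{eq:ode1} for $(a_y)$; in particular $\|a_y-\beta^{(N)}_y\|_\infty=O(1/N)$ for each $y$ by continuous dependence of ODE solutions, again with no finite-time blow-up thanks to $h,g>0$.

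Next I would write the closed-loop equations. Summing \eqref{eq:Xi} over $i$ and using $\sum_i\hat\alpha_{it}^{(N)}=-2\tilde b_2\beta^{(N)}\sum_i(\hat X_{it}^{(N)}-\bar X_t^{(N)})=0$, the empirical mean obeys the closed linear SDE $d\bar X_t^{(N)}=\tilde b_1(Y_t^{(N)},t)\bar X_t^{(N)}\,dt+\frac1N\sum_{i=1}^N dW_{it}^{(N)}$ with $\bar X_0^{(N)}=\frac1N\sum_i x_i^{(N)}$, i.e.\ exactly the MFG mean ODE $d\hat\mu_t=\tilde b_1\hat\mu_t\,dt$ perturbed by vanishing noise. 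Writing the aggregate noise as $\frac1N\sum_i W_{it}^{(N)}=\frac1N W_{1t}^{(N)}+\frac{\sqrt{N-1}}{N}W_{-1,t}^{(N)}$ with $W_{-1}^{(N)}:=\frac1{\sqrt{N-1}}\sum_{i=2}^N W_i^{(N)}$, the pair $(\hat X_1^{(N)},\bar X^{(N)})$ solves a two-dimensional linear SDE driven by the two independent Brownian motions $W_1^{(N)},W_{-1}^{(N)}$ and the chain $Y^{(N)}$, with initial datum $(x_1^{(N)},\frac1N\sum_i x_i^{(N)})$, so its law is fixed by this data alone. On $\Omega$ (enlarged if necessary, which alters none of the MFG objects), which carries the two independent Brownian motions $W,B$, the chain $Y$ and, by (A2), i.i.d.\ copies $\xi_2,\dots,\xi_N$ of $X_0$ independent of $(W,B,Y,X_0)$, set $\xi_1:=X_0$ and let $(Z^N,\bar Z^N)$ solve $dZ_t^N=\big(\tilde b_1(Y_t,t)Z_t^N-2\tilde b_2^2(Y_t,t)\beta^{(N)}_{Y_t}(t)(Z_t^N-\bar Z_t^N)\big)dt+dW_t$ and $d\bar Z_t^N=\tilde b_1(Y_t,t)\bar Z_t^N\,dt+\frac1N dW_t+\frac{\sqrt{N-1}}{N}dB_t$ with $Z_0^N=\xi_1$, $\bar Z_0^N=\frac1N\sum_i\xi_i$. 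Then $\mathcal L(Z^N,Y)=\mathcal L(\hat X_1^{(N)},Y^{(N)})$, so $(Z^N,Y)$ is a coupling of the two joint laws, and $Z_0^N=X_0=\hat X_0$.

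Now I would compare $\hat X$ and $Z^N$ pathwise on $\Omega$. They are driven by the same $W$ and $Y$, so in $\Delta_t:=\hat X_t-Z_t^N$ the martingale part cancels, and subtracting the $Z^N$-equation from \eqref{eq:Xhat03}, together with $Z_t^N-\bar Z_t^N=(\hat X_t-\hat\mu_t)-\Delta_t+(\hat\mu_t-\bar Z_t^N)$, gives the scalar linear ODE $d\Delta_t=\big[(\tilde b_1-2\tilde b_2^2\beta^{(N)}_{Y_t})\Delta_t-2\tilde b_2^2(a_{Y_t}-\beta^{(N)}_{Y_t})(\hat X_t-\hat\mu_t)+2\tilde b_2^2\beta^{(N)}_{Y_t}(\hat\mu_t-\bar Z_t^N)\big]\,dt$ with $\Delta_0=0$. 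Since the coefficients are bounded on $[0,T]$ by (A0) and $\beta^{(N)}$ is uniformly bounded, Gr\"onwall gives $\sup_{t\le T}|\Delta_t|\le C\int_0^T\big(|a_{Y_s}-\beta^{(N)}_{Y_s}|\,|\hat X_s-\hat\mu_s|+|\hat\mu_s-\bar Z_s^N|\big)\,ds$. The first term is $O(1/N)$ in $L^2$ because $\|a-\beta^{(N)}\|_\infty=O(1/N)$ and $\sup_s\mathbb E|\hat X_s-\hat\mu_s|^2<\infty$ by Theorem \ref{t:main}. For the second, $\varepsilon_t:=\hat\mu_t-\bar Z_t^N$ solves $d\varepsilon_t=\tilde b_1(Y_t,t)\varepsilon_t\,dt-\frac1N dW_t-\frac{\sqrt{N-1}}{N}dB_t$ with $\varepsilon_0=\mathbb E[X_0]-\frac1N\sum_i\xi_i$, so $\mathbb E|\varepsilon_0|^2=\mathrm{Var}(X_0)/N=O(1/N)$ by (A1) and, the diffusion coefficients being $O(1/\sqrt N)$, $\sup_{t\le T}\mathbb E|\varepsilon_t|^2=O(1/N)$ by a standard moment estimate. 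Hence $\mathbb E|\hat X_t-Z_t^N|^2=O(1/N)$. Since $Y_t$ is shared by $\hat X$ and $Z^N$, the product metric on $\mathbb R\times\mathcal Y$ then gives $\mathbb W_2\big(\mathcal L(\hat X_{1t}^{(N)},Y_t^{(N)}),\mathcal L(\hat X_t,Y_t)\big)^2\le\mathbb E\big[|\hat X_t-Z_t^N|^2\big]=O(1/N)$, i.e.\ the claimed rate $O(N^{-1/2})$; Proposition \ref{p:conv} also yields convergence in distribution.

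The \emph{main obstacle} is the second structural fact above: verifying the $N$-invariant algebraic pattern of the $\kappa N^3$-dimensional Riccati system \eqref{eq:ABC}, proving rigorously that it reduces to the low-dimensional ODE \eqref{eq:a_12}, and showing that \eqref{eq:a_12} converges to the MFG Riccati system \eqref{eq:ode1} at rate $O(1/N)$ without blow-up on $[0,T]$. Once this reduction and its quantitative stability are in place, the remaining ingredients — the closed-form LQ Nash solution, the translation-invariance simplification of the empirical mean, the embedding into a two-Brownian-motion space, and the Gr\"onwall comparison — are routine.
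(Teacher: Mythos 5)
Your proposal is correct and follows essentially the same route as the paper: the same coupling $(Z^N,\bar Z^N)$ on $\Omega$ driven by $W$ and $B$ with $W_{-1}^{(N)}$ aggregated into $B$, the same reduction of the Riccati system \eqref{eq:ABC} to \eqref{eq:a_12} with the $O(1/N)$ comparison to \eqref{eq:ode1}, and the same splitting of the error into the coefficient discrepancy $a-\hat a_1^N$ and the empirical-mean discrepancy $\bar X^N-\hat\mu$, each contributing $O(1/N)$ to the squared $\mathbb W_2$ distance. The only cosmetic difference is that you run Gr\"onwall directly on the pathwise difference $\Delta_t=\hat X_t-Z_t^N$ (whose martingale parts cancel), whereas the paper packages the same estimate as a Lipschitz property of an explicit solution map $G_t$ in its coefficients; and your translation-invariance argument for the mean-reverting feedback form is a clean heuristic for the $N$-invariant pattern that the paper instead verifies by substituting the ansatz \eqref{eq:magic} into \eqref{eq:ABC}.
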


% \textcolor{blue}{
 The proof of Theorem \ref{t:main02} is given in Subsection \ref{s:convergence} since it needs the comparison between the equilibrium path $\hat X_{1t}^{(N)}$ in $N$-player game and the equilibrium path $\hat X_t$ in MFG.
 %}

	%%%%%%%%%%%%%%%%%%%
	\subsection{Remarks on the main results}
	\label{s:remarks on main}
	One can interpret the main results in plain words:
	For $N$-player game with dynamic \eqref{eq:Xi} and cost structure \eqref{eq:cost_N} for large $N$,
	the equilibrium control of the generic player can be effectively approximated by steering itself toward the population center 
	$\hat \mu_t$ depending only on the function $\tilde b_1(\cdot)$ and  the entire past of the common noise, 
	whose velocity is dependent on only the function $\tilde b_2(\cdot)$ and the entire past of the common noise.
	The effectiveness can be quantified by the convergence rate of $1/2$ for the one-dimensional 
	Mean Field Game
	under LQ structure and CMTC common noise. 
	A natural question is whether the convergence rate can be generalized to more general settings.
	
	This paper focuses on the one-dimensional problem to avoid unnecessary symbol complexity. 
	Therefore, the main convergence rate $1/2$ still holds for multidimensional problems using the same coupling procedure. 
	For convenience to check, we summarize the computation involved in multidimensional problems
	in Appendix \ref{s:section5}.
	
	The current coupling procedure can also be adapted with suitable modifications 
	to the LQ Mean Field Game problems with Brownian common noise, 
	%\textcolor{blue}{
	see \cite{JSY23}.
	%} 
	In particular, the reduction of the $O(N^3)$-dimensional ODE can be conducted similarly
	and the convergence rate is still maintained as $1/2$.
	However, the dependence of the mean and variance process on the common noise and subsequent calculations 
	are significantly different from the current paper, 
	%\textcolor{blue}{
	see Definition 4 of \cite{JSY23}.
	%} 
	
	Indeed, choosing the CTMC common noise instead of Brownian motion does not
	simplify the underlying problem, since it preserves the path-dependence feature of the equilibrium measure.
	On the contrary, the advantage of CTMC common noise is that the applications aim to model less 
	frequently changing environment settings, such as government policies implemented by multiple different regimes.
	Due to its realistic applications, stochastic control theory perturbed by CTMC is 
	extensively studied in the context of hybrid control problems, see books \cite{MY06, YZ10} and the references therein.

	We close this section with a remark on the uniqueness. 
	The uniqueness of Mean Field Game can be achieved under Lasry-Lions monotonicity \cite{LJ07} 
	or displacement monotonicity \cite{GMMZ21} and our setting in Section \ref{s:mfg} satisfies the displacement monotonicity. Thus, the convergence of Theorem \ref{t:main02} 
	implies that the unique equilibrium path of $N$-player game converges 
	to the unique equilibrium paths of the limiting MFG, which is characterized by Theorem \ref{t:main}.

	%%%%%%%%%%%%%%%%%%%%%%%%%%%%%%%%%%%%%%%%%%%%%%%%%

	\section{
	%\textcolor{blue}{
	Main results of MFG
	%}
	}
	\label{s:section3}
	
	This section is devoted to the proof of the first main result Theorem \ref{t:main} on the MFG solution. 
	First, we outline the scheme based on the Markovian structure of the equilibrium by reformulating the MFG problem in Subsection~\ref{s:overview}.
	Next, we solve the underlying control problem in Subsection~\ref{s:Riccati system} and provide the corresponding Riccati system.
	Finally, Subsection~\ref{s:proof_of_main_result} proves Theorem \ref{t:main} by checking the fixed point condition of MFG problem.
	\subsection{Overview}\label{s:overview}
	By Definition \ref{d:neN}, to solve for the equilibrium measure, one shall search the infinite dimensional space of the random measure flows  $m:(0, T]\times \Omega \mapsto \mathcal P_2(\mathbb R)$, until a measure flow satisfies the fixed point condition $m_t = \mathcal L (\hat X_t|Y), \forall t\in(0,T]$, 
	see Figure \ref{fig:MFG1}, which requires to check the following infinitely many conditions:
	$$ [m_t]_k = \mathbb E[\hat X^k_t |Y], \quad \forall k = 1, 2, \ldots,$$
	if they exist. 
	
	The first observation is that the cost functions $F$ and $G$  in  \eqref{eq:running cost}-\eqref{eq:term cost} are dependent on the measure $m$ only via the first two moments: %the quadratic cost structure, which can be rewritten by
	\begin{equation*}
		%\label{eq:cost01}
		\begin{aligned}
		    F(y, x, m) &= h(y) (x^2 - 2x [m]_1 + [m]_2), \\
		    G(y, x, m) &= g(y) (x^2 - 2x [m]_1 + [m]_2).
		\end{aligned}
	\end{equation*}
	Therefore, the underlying stochastic control problem for MFGs can be entirely determined by the input given by 
	$\mathbb R^2$ valued random process  $\mu_t = [m_t]_1$ and $\nu_t = [m_t]_2$, 
	which implies that the fixed point condition can be effectively reduced to check two conditions only:
	$$\mu_t = \mathbb E[\hat X_t|Y], \ \nu_t = \mathbb E[\hat X_t^2|Y].$$
	This observation effectively reduces our search from the space of random measure-valued processes 
	$m: (0, T]\times \Omega \mapsto \mathcal P_2(\mathbb R)$ to the space of $\mathbb R^2$-valued random processes
	$(\mu, \nu): (0, T]\times \Omega \mapsto \mathbb R^2$. 
	
	Note that, if underlying MFGs have no common noise $Y$, then $(\mu, \nu)$ is a deterministic 
	mapping $[0, T]\mapsto \mathbb R^2$ and the above observation 
	is enough to reduce the original infinite-dimensional MFGs into a finite-dimensional system.
	However,  the following example shows that this is not the case 
	for MFGs with a common noise and it becomes the main drawback to characterizing MFGs via a finite-dimensional system.
	\begin{example}\label{exm:1}
	To illustrate, we consider the following uncontrolled mean field dynamics:
	Let the mean field term $\mu_t := \mathbb E[\hat X_t|Y]$, where the underlying dynamic is given by
	$$d \hat X_{t} = - \mu_t Y_t dt + dW_{t}.$$
	\begin{itemize}
		\item $\mu_t$ is path dependent on $Y$, i.e.
		$$\mu_t = \mu_0  \exp \Big\{ - \int_0^t Y_s ds \Big\}.
		$$
		This implies that no finite dimensional system is possible to  characterize the process $\mu_t$, since 
		the $(t, Y) \mapsto \mu_t$ is a function on an infinite dimensional domain.
		\item  $\mu_t $ is {\it Markovian}, i.e.
		$$ d \mu_t = -  Y_t \mu_t dt.$$
		It might be possible to 
		characterize $\mu_t$ via a function $(t, Y_t, \mu_t) \mapsto \frac{d \mu_t}{dt}$ on a finite dimensional domain.
		
	\end{itemize}
	\end{example}

		To solidify the above idea, we need to postulate the Markovian structure for the first and second moments of the MFG equilibrium.
	More precisely, our search  for the equilibrium will be confined to the space $\mathcal M$ of measure flows whose first and second moment exhibits Markovian structure. 
	\begin{definition}
		\label{d:markov}
		The space $\mathcal M$ is the collection of all $\mathcal F^Y_t$-adapted measure flows 
		$m: [0,T]\times \Omega \mapsto \mathcal P_2(\mathbb R)$, whose first moment  $[m_t]_1 := \mu_t$ and second moment $[ m_t]_2 := \nu_t$ satisfy 
		\begin{equation}
			\label{eq:mu_sigma}
			\begin{aligned}
				&\mu_t = \mu_0 + \int_0^t \left(w_0(Y_s,s)\mu_s +w_1(Y_s,s)\right)ds, \\
				& \nu_t = \nu_0 + \int_0^t \left(w_2(Y_s,s)\mu_s+w_3(Y_s,s)\nu_s +w_4(Y_s,s)\mu_s^2 + w_5(Y_s,s)\right)ds,
			\end{aligned}
		\end{equation}
		for all $t \in [0, T]$ and for some smooth deterministic functions $(w_i: i = 0, 1, \ldots, 5)$.
	\end{definition}
	
	\begin{figure}[h]
	\centering
		\includegraphics[width= .5\textwidth]{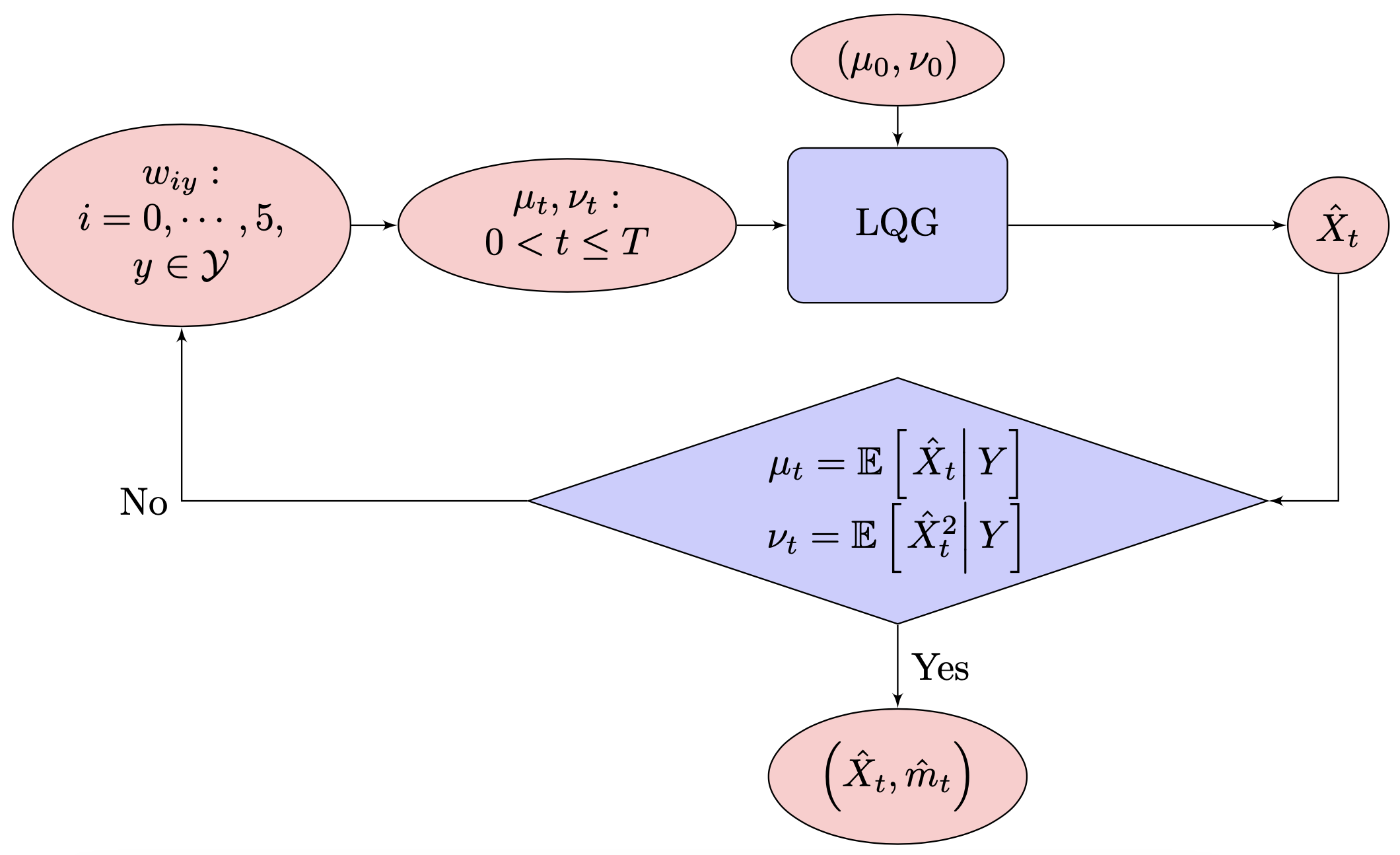}
		\caption{ Equivalent MFGs diagram with $\mu_0 = [m_0]_1$ and $\nu_0 = [m_0]_2$.}
		\label{fig:MFG2}
	\end{figure}

	The flowchart for our equilibrium is depicted in Figure \ref{fig:MFG2}. 
	Subsection \ref{s:Riccati system} 
	covers the derivation of the Riccati system for the LQG system with a given population measure flow $m\in \mathcal M$, 
	which provides the key building block to MFGs. 
	%In this part, we need to use Dynkin's formula, whose proof is relegated to Subsection \ref{s:dynkin}.
	In Subsection \ref{s:proof_of_main_result}, we check the fixed point condition and provide a finite-dimensional characterization of MFGs, which gives the first main result Theorem \ref{t:main}.

	\subsection{The generic player's control with a given population measure} 
	\label{s:Riccati system}
	
	The advantage of the generic player's control problem associated with $m\in \mathcal M$ is that 
	its optimal path can be characterized via the following classical stochastic control problem:
	\begin{itemize}
		\item (P3) 
		Given smooth functions $w = (w_i: i = 0, 1, \ldots, 5)$, find the optimal value 
		$\bar V = \bar V[w]$
		$$
		\begin{array}{ll}
			\bar V(y, x, t, \bar \mu, \bar v) & \displaystyle = \inf_{\alpha\in \mathcal A}
			\mathbb E \left[  \int_t^T \left(\frac 1 2 \alpha_s^2 + \bar F(Y_s, X_s, \mu_{s}, \nu_s) \right)ds 
			\right.
			\\ & \displaystyle \hspace{0.5in} 
			\left. 
			+ \bar G(Y_T, X_T, \mu_T, \nu_T) 
			\right\vert Y_t = y, X_t = x, \mu_t = \bar \mu, \nu_t = \bar \nu
			\Big]
		\end{array}
		$$
		underlying $\mathbb R^4$-valued processes $(Y, X, \mu, \nu)$ defined through \eqref{eq:Y}-\eqref{eq:X}-\eqref{eq:mu_sigma} with the finite dimensional cost functions: $\bar F, \bar G: \mathbb R^4 \mapsto \mathbb R$ given by
		$$
		\bar F (y, x, \bar \mu, \bar \nu) = h(y) (x^2 - 2x \bar \mu + \bar \nu),$$
		$$ 
		\bar G (y, x, \bar \mu, \bar \nu) = g(y) (x^2 - 2x \bar \mu + \bar \nu),
		$$
		where $\bar{\mu}, \bar{\nu}$ are scalars, while $\mu, \nu$ are used as processes.
	\end{itemize}
	
	\begin{lemma}\label{l:mkv}
		Given $m\in \mathcal M$ associated with $w = (w_i: i = 0, 1, \dots, 5)$, 
		the player's value \eqref{V_m} under assumption (A1) is 
		$$U[m_0] (y, x) = \bar V(y, x, 0, [m_0]_1, [m_0]_2),$$
		and the optimal control has a feedback form
		$$\hat \alpha_t = \bar \alpha(Y_t, X_t, t, \mu_t, \nu_t)$$
		underlying the processes $(Y, X, \mu, \nu)$ defined through \eqref{eq:Y}-\eqref{eq:X}-\eqref{eq:mu_sigma}, 
		whenever there exists a feedback optimal control $\bar \alpha$ for the problem (P3).
	\end{lemma}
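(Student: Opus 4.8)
The plan is to recognize the generic player's control problem \eqref{V_m} associated with a measure flow $m\in\mathcal M$ as \emph{literally} the Markovian control problem (P3), after augmenting the state by the two moment processes, and then to read off both assertions.

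\emph{Step 1: rewrite the cost pathwise.} Under (A1) the running and terminal costs \eqref{eq:running cost}--\eqref{eq:term cost} depend on $m$ only through its first two moments, so setting $\mu_s:=[m_s]_1$, $\nu_s:=[m_s]_2$ one has, $\mathbb P$-a.s.\ and for every admissible $\alpha$,
\[
F(Y_s,X_s,m_s)=\bar F(Y_s,X_s,\mu_s,\nu_s),\qquad G(Y_T,X_T,m_T)=\bar G(Y_T,X_T,\mu_T,\nu_T).
\]
Hence $J[m](y,x,\alpha)$ is exactly the cost functional in (P3) with the processes $(\mu,\nu)$ substituted for the scalar arguments $(\bar\mu,\bar\nu)$, and $V[m](y,x)$ is the infimum of this functional over $\mathcal A$.

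\emph{Step 2: the augmented process is a controlled Markov process.} Since $m\in\mathcal M$, the pair $(\mu,\nu)$ solves the ODE system \eqref{eq:mu_sigma} with coefficients $w_i(Y_s,s)$ and deterministic initial data $\mu_0=[m_0]_1$, $\nu_0=[m_0]_2$; by uniqueness of ODE solutions this makes $(\mu,\nu)$ an $\mathcal F^Y_t$-adapted, non-anticipating functional of the common noise depending on $m$ only through $(m_0,w)$. Consequently the quadruple $(Y_t,X_t,\mu_t,\nu_t)$ is a controlled Markov process on $\mathcal Y\times\mathbb R^3$: $Y$ is an autonomous CTMC, $X$ evolves by the controlled dynamics \eqref{eq:X} with coefficients depending only on $(Y_s,s)$, and $(\mu,\nu)$ evolve by ODEs depending only on $(Y_s,s)$. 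Thus both the dynamics and the rewritten cost depend on the past only through the current value of the quadruple and on time — which is precisely the data of (P3) with initial time $0$ and initial state $(y,x,[m_0]_1,[m_0]_2)$.

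\emph{Step 3: conclude.} Steps 1--2 show that minimizing $J[m](y,x,\cdot)$ over $\mathcal A=L^4_{\mathbb F}$ is the same optimization as (P3) started from $(y,x,[m_0]_1,[m_0]_2)$ at time $0$, so $V[m](y,x)=\bar V(y,x,0,[m_0]_1,[m_0]_2)$, which is the claimed value identity (written $U[m_0](y,x)$). For the feedback statement, if (P3) admits a feedback optimizer $\bar\alpha(y,x,t,\mu,\nu)$, then substituting the realized processes yields $\hat\alpha_t=\bar\alpha(Y_t,X_t,t,\mu_t,\nu_t)$; this control is $\mathbb F$-progressively measurable and, after the routine $L^4$ bound from (A1), $\mathbb E[X_0^2]<\infty$ and the linearity of \eqref{eq:X}, lies in $\mathcal A$ and attains the infimum by construction, hence is optimal for \eqref{V_m}.

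I expect the only delicate point to be the equivalence of the admissible classes: in \eqref{V_m} the control ranges over \emph{all} $\mathbb F$-progressively measurable processes (the filtration $\mathbb F$ carries $W$, $B$ and $Y$), whereas (P3) is naturally posed over controls adapted to the augmented state. This is settled by the standard fact that for a Markovian problem the value over open-loop $\mathbb F$-adapted controls coincides with the value over feedback controls of the state, together with the observation that the auxiliary Brownian motion $B$ is irrelevant here since it enters neither the dynamics nor the cost. The remaining ingredients — the pathwise cost identity, the adaptedness of $(\mu,\nu)$, and the integrability bookkeeping — are routine.
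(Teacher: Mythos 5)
Your proposal is correct and follows the same route the paper intends: the paper's own proof is a one-line remark deferring to the quadratic cost structure, and your argument simply fills in the omitted details (the reduction of the $m$-dependence of $F,G$ to the first two moments, the Markovianization of $(Y,X,\mu,\nu)$ via the class $\mathcal M$, and the identification of the two admissible classes — the last point being immediate here since (P3) is also posed over $\mathcal A=L^4_{\mathbb F}$). No gap.
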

	\begin{proof}
		Due to the quadratic cost structure in  \eqref{eq:running cost}-\eqref{eq:term cost}, we have enough regularity to all concerned value functions and the details are omitted. 
	\end{proof}
	Next, we turn to the solution to the control problem (P3).
	
	\subsubsection{HJB equation}
	
	For the simplicity of notations, for each $i \in \{0, 1, 2, 3, 4, 5\}$ and $y \in \mathcal Y$, denote 
	the function $(x,t, \bar \mu, \bar \nu) \mapsto v(y,x,t, \bar \mu, \bar \nu)$ as $v_y$, and denote $t\mapsto w_i(y, t)$ as $w_{iy}$. We apply similar notations for other functions whenever they have a variable $y\in \mathcal Y$.
	Formally, under enough regularity conditions, the value function $\bar V$ defined in (P3) is the solution $v$ of the following coupled HJBs
	\begin{equation}
		\label{eq:hjb1}
		\begin{aligned}
			\begin{cases}
				\displaystyle \partial_t v_y + \tilde{b}_{1y}x \partial_x v_y - \frac{1}{2} \left(\tilde{b}_{2y} \partial_x v_y \right)^2 + \frac{1}{2}\partial_{xx} v_y + \partial_\mu v_y \left(w_{0y}\bar \mu + w_{1y} \right)+ \\ 
				\hspace{1in} \displaystyle  \partial_\nu v_y \left(w_{2y}\bar \mu+w_{3y}\bar \nu +w_{4y}\bar \mu^2 + w_{5y}\right) 
				+ \sum_{i=1}^{\kappa} q_{y,i} v_i + \bar F_y = 0, \\
				\displaystyle v_{y}(x, T, \mu_T,\nu_T) = \bar G_{y}(x, \mu_T,\nu_T), \ y \in \mathcal Y.
			\end{cases}
		\end{aligned}
	\end{equation}
	Furthermore, the optimal control has to admit the feedback form of
	\begin{equation}
		\label{eq:optimal control}
		\hat \alpha(t) = - \tilde{b}_2(Y_t, t) \partial_x v({Y_t}, \hat{X_t}, t, \mu_t,  \nu_t).
	\end{equation}		
	Next, we identify what conditions are needed for equating the control problem (P3) and HJB equation. Denote 
	$$\mathbb S = \left\{ v \in C^{\infty} : \begin{array}
	{ll}
	(1+|x|^2)^{-1} (|v| + |\partial_t v|) + 
	\\ \hspace{.5in} (1+|x|)^{-1}(|\partial_x v| + |\partial_\mu v| + |\partial_\nu v|) +  |\partial_{xx} v| 
	< K, \forall (y, x, t, \mu, \nu), \hbox{ for some } K
	\end{array}
	    \right\}.$$
	\begin{lemma}
		\label{l:verification}  (Verification theorem)
		Consider  the control problem (P3) with some given smooth $w$.
		%\begin{enumerate}
			%\item (Verification theorem) 
			Suppose there exists a solution $v \in \mathbb S$ of  \eqref{eq:hjb1}. Then,  $v_y(x, t,  \bar \mu,\bar \nu) = \bar V(y, x, t,\bar \mu,\bar \nu)$ holds, and an optimal control is provided by \eqref{eq:optimal control}.
			%\item Suppose that the value function $\bar V$ belongs to $\mathbb S$, and then $\bar V_y(x, t,\bar \mu,\bar \nu) := \bar V(y, x, t,\bar \mu,\bar \nu)$ solves HJB equation \eqref{eq:hjb1}. Moreover, $\hat \alpha$ of \eqref{eq:optimal control} is an optimal control.
%		\end{enumerate}
	\end{lemma}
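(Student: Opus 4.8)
The plan is to run the standard verification argument, adapted to the hybrid state process $(Y,X,\mu,\nu)$ obtained by coupling the controlled diffusion \eqref{eq:X}, the deterministic-coefficient dynamics \eqref{eq:mu_sigma}, and the Markov chain \eqref{eq:Y}. Fix a starting point $(y,x,t,\bar\mu,\bar\nu)$ and an arbitrary admissible control $\alpha\in\mathcal A$, and let $(Y,X,\mu,\nu)$ be the corresponding state process with $(Y_t,X_t,\mu_t,\nu_t)=(y,x,\bar\mu,\bar\nu)$. First I would apply the It\^o--Dynkin formula to $s\mapsto v_{Y_s}(X_s,s,\mu_s,\nu_s)$ on $[t,T]$: besides the usual drift coming from the diffusion and from the $(\mu,\nu)$-dynamics, the jumps of $Y$ contribute the generator term $\sum_{i\in\mathcal Y}q_{Y_s,i}v_i$ together with a compensated pure-jump martingale, and the Brownian part contributes the local martingale $\int_t^{\cdot}\tilde b_2(Y_s,s)\,\partial_x v_{Y_s}(X_s,s,\mu_s,\nu_s)\,dW_s$. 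Invoking the HJB system \eqref{eq:hjb1} to eliminate $\partial_t v$, the drift of $v_{Y_s}(X_s,s,\mu_s,\nu_s)$ becomes (all functions evaluated at $(Y_s,X_s,s,\mu_s,\nu_s)$) equal to $\tfrac12(\tilde b_2\partial_x v)^2+\tilde b_2\alpha_s\partial_x v-\bar F$.

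The second step is the elementary pointwise bound $\tfrac12 p^2 + p a \ge -\tfrac12 a^2$ for all $a,p\in\mathbb R$, with equality exactly at $a=-p$. Applied with $p=\tilde b_2(Y_s,s)\partial_x v_{Y_s}$ and $a=\alpha_s$, it shows the drift above is $\ge-(\tfrac12\alpha_s^2+\bar F)$ for every admissible $\alpha$, and is $=-(\tfrac12\alpha_s^2+\bar F)$ exactly when $\alpha_s$ equals the feedback \eqref{eq:optimal control}. Integrating over $[t,T]$, using the terminal condition $v_y(x,T,\cdot,\cdot)=\bar G_y$, taking expectations and discarding the martingale terms (whose expectations vanish, see below), I get $v_y(x,t,\bar\mu,\bar\nu)\le\mathbb E\big[\int_t^T(\tfrac12\alpha_s^2+\bar F)\,ds+\bar G\big]$; taking the infimum over $\alpha$ yields $v_y(x,t,\bar\mu,\bar\nu)\le\bar V(y,x,t,\bar\mu,\bar\nu)$. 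For the reverse inequality I would substitute the feedback $\hat\alpha_s=-\tilde b_2(Y_s,s)\,\partial_x v(Y_s,\hat X_s,s,\mu_s,\nu_s)$ into the closed-loop system; since $v\in\mathbb S$ is $C^\infty$ with $\partial_x v$ of linear growth in $x$, the closed-loop SDE for $\hat X$ is well posed and $\hat\alpha\in\mathcal A$, so all the above inequalities become equalities and $v_y(x,t,\bar\mu,\bar\nu)=J[\hat\alpha]\ge\bar V(y,x,t,\bar\mu,\bar\nu)$. Combining the two bounds gives $v=\bar V$ and optimality of \eqref{eq:optimal control}.

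The step requiring genuine care is showing the two local martingales above are in fact true martingales, so that their expectations vanish; this is exactly what the growth restrictions defining the class $\mathbb S$ are for. Combined with $L^p$-moment estimates obtained from Gronwall's inequality for $\sup_{t\le s\le T}\big(|X_s|+|\mu_s|+|\nu_s|\big)$ — finite because $w$ is smooth on the compact interval $[0,T]$, $\mathbb E[X_0^2]<\infty$ by (A1), and $\alpha\in L^4_{\mathbb F}$ — the bound $|\partial_x v|\le K(1+|x|)$ forces $\tilde b_2\,\partial_x v_Y(X,\cdot)\in L^2_{\mathbb F}$, while the finite-activity jump martingale has integrable bracket, so both have zero mean; the same moment estimates, now for the affine, linear-growth closed-loop drift, give well-posedness of $\hat X$ and membership $\hat\alpha\in\mathcal A$. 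No convexity input beyond the completion of the square is needed, since the Hamiltonian is quadratic in the control — this is the structural reason the verification goes through cleanly once the martingale bookkeeping is settled.
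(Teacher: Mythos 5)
Your proposal is correct and follows essentially the same route as the paper: apply the It\^o--Dynkin formula to $v$ along the hybrid state $(Y,X,\mu,\nu)$, use the HJB system to reduce the drift to the Hamiltonian discrepancy, and complete the square in the control (the paper phrases this as $\inf_a\{\mathcal G^a v+\tfrac12 a^2\}=-\bar F$, which is the same pointwise inequality), with the martingale property justified exactly as you do via the growth bounds defining $\mathbb S$ together with $L^4$ moment estimates on $X$ (the paper packages this as its Lemma \ref{l:chain rule}). The closing step --- well-posedness of the closed-loop SDE and admissibility of $\hat\alpha$ from the Lipschitz/linear-growth bounds on $\partial_x v$ --- also matches the paper's argument.
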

	\begin{proof}
		%\begin{enumerate}
			%\item 
			We first prove the verification theorem. Since $v \in \mathbb S$, for any admissible $\alpha\in L_{\mathbb F}^4$, the process $X^\alpha$ is well defined and one can use Dynkin's formula given by Lemma \ref{l:chain rule} to write
			$$\mathbb E \left[v(Y_T,X_T, T,\mu_T,\nu_T)\right] = v(y, x, t, \bar \mu,\bar \nu) 
			+ \mathbb E \left[ \int_t^T \mathcal G^{\alpha(s)} v(Y_s, X_s, s, \mu_s, \nu_s) ds \right],$$
			where
			\begin{equation*}
				\begin{aligned}
					\mathcal G^a f(y, x, s, \bar{\mu}, \bar{\nu}) &= \left(\partial_t + \left(\tilde{b}_{1y} x + \tilde{b}_{2y}a \right)\partial_x + \frac 1 2 \partial_{xx} + \mathcal Q + \left(w_{0y}\bar \mu+w_{1y}\right) \partial_{\bar\mu} + \right. \\
					& \quad  \quad  \quad \left. \left(w_{2y}\bar \mu + w_{3y}\bar \nu+w_{4y}\bar \mu^2 + w_{5y}\right) \partial_{\bar\nu} \right) f(y, x, s, \bar\mu, \bar\nu).
				\end{aligned}
			\end{equation*}
			Note that HJB actually implies that
			$$\inf_a \left\{ \mathcal G^a v + \frac 1 2 a^2\right\} = - \bar F,$$
			which again implies
			$$- \mathcal G^a v \le \frac 1 2 a^2 + \bar F.$$
			Hence, we obtain that for all $\alpha\in L_{\mathbb F}^4$,
			$$
			\begin{array}
				{ll}
				\quad v(y, x, t, \bar \mu,\bar \nu) 
				\\ \displaystyle =  \mathbb E \left[ \int_t^T  - \mathcal G^{\alpha(s)} v(Y_s, X_s, s, \mu_s,\nu_s) ds \right]  +\mathbb E \left[v(Y_T,X_T, T,\mu_T,\nu_T) \right] 
				\\ \displaystyle 
				\le  \mathbb E \left[ \int_t^T \left( \frac 1 2 \alpha^2(s) + \bar F(Y_s, X_s,\mu_s,\nu_s) \right) ds \right] +\mathbb E \left[\bar G(Y_T,X_T, \mu_T, \nu_T) \right] \\
				= J(y, x, t,  \alpha, \bar \mu,\bar \nu).
			\end{array}
			$$
			
			In the above, if $\alpha$ is replaced by $\hat \alpha$ given by the feedback form \eqref{eq:optimal control},
			then since $\partial_x v$ is Lipschitz continuous in $x$, there exists corresponding optimal path $\hat X \in L_{\mathbb F}^4$.
			Thus, $\hat \alpha$ is also in $L_{\mathbb F}^4$. One can repeat all above steps by replacing $X$ and $\alpha$ by $\hat X$ and $\hat \alpha$, and $\le $ sign by
			$=$ sign to conclude that $v$ is indeed the optimal value.

	\end{proof}
	
	\subsubsection{LQG solution}
	Note that, the costs $\bar F$ and $\bar G$ of (P3) are quadratic functions in $(x, \bar \mu, \bar \nu)$, while the
	drift function of the process $\nu$ of \eqref{eq:mu_sigma} is not linear in $(x, \bar \mu, \bar \nu)$. Therefore, the  control problem (P3) does not fall into the standard LQG control framework. Nevertheless, similar to the LQG solution, we guess the value function as a quadratic function in the form of
	\begin{equation}
		\label{eq:v1}
		\begin{aligned}
			v_y(x, t,\bar \mu,\bar \nu) =& a_y(t) x^2 +d_y(t) x+e_y(t)\bar \mu +f_y(t)x\bar \mu +   k_y(t)\bar \mu^2 +  b_y(t) \bar \nu + c_y(t), \quad y \in \mathcal{Y}.
		\end{aligned}
	\end{equation}
	%In addition, different from the classical LQ control problem, 
	With the above setup, for $t \in [0, T]$, the optimal control is
	\begin{equation}
		\label{eq:alpha}
		\hat \alpha_t = - \tilde{b}_2(Y_t, t) \partial_x v(Y_t, \hat X_t, t, \mu_t, \nu_t) = - \tilde{b}_2(Y_t, t) \left( 2 a_{Y_t}(t) \hat X_t + d_{Y_t}(t) + f_{Y_t}(t)\mu_t \right),
	\end{equation}
	and the optimal path $\hat X$ is
	\begin{equation}
		\label{eq:Xhat}
		d \hat X_t = \left( \tilde{b}_1(Y_t, t) \hat{X}_t - \tilde{b}_2^2(Y_t, t) \left( 2 a_{Y_t}(t) \hat X_t + d_{Y_t}(t) + f_{Y_t}(t)\mu_t \right) \right) dt + dW_t.
	\end{equation}
	Denote the following ODE systems for $y \in \mathcal Y$,
	\begin{equation}
		\label{eq:ode2}
		\begin{aligned}
			\begin{cases}
				\displaystyle a_y' + 2\tilde{b}_{1y}a_y - 2 \tilde{b}_{2y}^2 a_y^2 + \sum_{i=1}^{\kappa} q_{y,i} a_i + h_y = 0, \\
				\displaystyle d_y' + \tilde{b}_{1y} d_y - 2 \tilde{b}_{2y}^2 a_y d_y + f_y w_{1y} + \sum_{i=1}^{\kappa} q_{y,i} d_i = 0 , \\
				\displaystyle e_y' -\tilde{b}_{2y}^2 d_y f_y +2k_y w_{1y}+ e_y w_{0y}+b_y w_{2y} + \sum_{i=1}^{\kappa} q_{y,i} e_i = 0,\\
				\displaystyle f_y' + \tilde{b}_{1y} f_y - 2 \tilde{b}_{2y}^2 a_y f_y + f_y w_{0y} + \sum_{i=1}^{\kappa} q_{y,i} f_i - 2h_y  = 0, \\
			    \displaystyle k_y' -\frac{1}{2} \tilde{b}_{2y}^2 f_y^2 + 2k_y w_{0y} + b_y w_{4y} + \sum_{i=1}^{\kappa} q_{y,i} k_i= 0, \\
				\displaystyle b_y' +b_y w_{3y} + \sum_{i=1}^{\kappa} q_{y,i} b_i + h_y = 0, \\
				\displaystyle c_y' + a_y -\frac{1}{2}\tilde{b}_{2y}^2 d_y^2 + e_y w_{1y} + b_y w_{5y} + \sum_{i=1}^{\kappa} q_{y,i} c_i = 0,
			\end{cases}
		\end{aligned}        
	\end{equation}
	with terminal conditions
	\begin{equation}
		\label{eq:ode2_terminal}
		\begin{aligned}
			&a_y(T) = g_y, \ b_y(T) = g_y, \  c_y(T) = 0, \ d_y(T) = 0, \ e_y(T) = 0, \ f_y(T) = -2g_y, \ k_y(T) = 0.
		\end{aligned}
	\end{equation}
	
	\begin{lemma}
		\label{l:ricatti1}
		Suppose there exists a unique solution $(a_y, b_y, c_y, d_y, e_y, f_y,k_y: y \in \mathcal Y)$ to the ODE system \eqref{eq:ode2}-\eqref{eq:ode2_terminal} on $[0,T]$. Then the value function of (P3) is
		\begin{equation}
			\label{eq:Vhat}
			\begin{aligned}
				&\bar V (y, x, t, \bar \mu, \bar \nu) = v_y(x, t, \bar \mu, \bar \nu) \\ 
				=\ & a_y(t) x^2 +d_y(t) x+e_y(t)\bar \mu +f_y(t)x\bar \mu + k_y(t)\bar \mu^2 +  b_y(t) \bar \nu + c_y(t)
			\end{aligned}
		\end{equation}
		for $y \in \mathcal Y$ and the optimal control and optimal path are given by \eqref{eq:alpha} and \eqref{eq:Xhat}, respectively.
	\end{lemma}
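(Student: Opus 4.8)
The plan is to follow the classical guess-and-verify route for linear-quadratic control. We substitute the quadratic ansatz \eqref{eq:v1} into the coupled HJB system \eqref{eq:hjb1}, show that the equation holds identically in $(x,\bar\mu,\bar\nu)$ precisely when the coefficient functions $(a_y,b_y,c_y,d_y,e_y,f_y,k_y:y\in\mathcal Y)$ solve the Riccati system \eqref{eq:ode2}--\eqref{eq:ode2_terminal}, check that the resulting $v$ belongs to the class $\mathbb S$, and then invoke the verification theorem (Lemma \ref{l:verification}) to conclude that $v=\bar V$ and that \eqref{eq:alpha}--\eqref{eq:Xhat} give the optimal control and optimal path.

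Concretely, first I would record the derivatives of $v_y(x,t,\bar\mu,\bar\nu)=a_y x^2+d_y x+e_y\bar\mu+f_y x\bar\mu+k_y\bar\mu^2+b_y\bar\nu+c_y$, namely $\partial_x v_y=2a_y x+d_y+f_y\bar\mu$, $\partial_{xx}v_y=2a_y$, $\partial_\mu v_y=e_y+f_y x+2k_y\bar\mu$, $\partial_\nu v_y=b_y$, and $\partial_t v_y=a_y'x^2+d_y'x+e_y'\bar\mu+f_y'x\bar\mu+k_y'\bar\mu^2+b_y'\bar\nu+c_y'$. The only nonlinearity in \eqref{eq:hjb1} is the Hamiltonian term $-\tfrac12(\tilde b_{2y}\partial_x v_y)^2=-\tfrac12\tilde b_{2y}^2(2a_y x+d_y+f_y\bar\mu)^2$, and the key observation is that its expansion, together with the transport terms $\partial_\mu v_y(w_{0y}\bar\mu+w_{1y})$ and $\partial_\nu v_y(w_{2y}\bar\mu+w_{3y}\bar\nu+w_{4y}\bar\mu^2+w_{5y})$ and the running cost $\bar F_y=h_y(x^2-2x\bar\mu+\bar\nu)$, produces only the monomials $x^2,\,x\bar\mu,\,\bar\mu^2,\,\bar\nu,\,x,\,\bar\mu$ and the constant already present in the ansatz --- this is exactly why the quadratic class is invariant under \eqref{eq:hjb1}. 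Equating the coefficient of each monomial to zero, and noting that the coupling $\sum_i q_{y,i}v_i$ contributes the term $\sum_i q_{y,i}(\cdot)_i$ to the equation governing each coefficient $(\cdot)_y$, yields the seven scalar ODEs of \eqref{eq:ode2}; the terminal identity $v_y(x,T,\bar\mu,\bar\nu)=\bar G_y=g_y(x^2-2x\bar\mu+\bar\nu)$ forces \eqref{eq:ode2_terminal}.

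It then remains to check $v\in\mathbb S$ and to apply Lemma \ref{l:verification}. Since, by hypothesis, the coefficient functions solve \eqref{eq:ode2}--\eqref{eq:ode2_terminal} on all of $[0,T]$, they are continuous, hence bounded, so $v,\partial_t v,\partial_x v,\partial_{xx}v,\partial_\mu v,\partial_\nu v$ satisfy the quadratic/linear growth bounds defining $\mathbb S$, uniformly over the range of $(\mu,\nu)$ actually visited, which is bounded because $(\mu,\nu)$ solves the ODE \eqref{eq:mu_sigma} with smooth coefficients on the compact interval $[0,T]$. Lemma \ref{l:verification} then identifies $v$ with $\bar V$; substituting $\partial_x v_y=2a_{Y_t}(t)\hat X_t+d_{Y_t}(t)+f_{Y_t}(t)\mu_t$ into the feedback \eqref{eq:optimal control} recovers \eqref{eq:alpha}, and inserting this control into the state dynamics \eqref{eq:X} gives the closed-loop equation \eqref{eq:Xhat}, a linear SDE with continuous coefficients which has a unique strong solution lying in $L^4_{\mathbb F}$, so the feedback control is admissible and optimal.

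The bulk of the work is the coefficient bookkeeping --- in particular tracking how the cross terms of $(2a_y x+d_y+f_y\bar\mu)^2$ distribute among the $d_y$, $e_y$, and $k_y$ equations (the $-2\tilde b_{2y}^2 a_y d_y$ term in the $d_y$ equation, the $-\tilde b_{2y}^2 d_y f_y$ term combined with $2k_y w_{1y}+b_y w_{2y}$ in the $e_y$ equation, and the $-\tfrac12\tilde b_{2y}^2 f_y^2$ term combined with $2k_y w_{0y}+b_y w_{4y}$ in the $k_y$ equation), and checking that the nonlinear drift $w_{4y}\bar\mu^2$ of $\nu$ enters only through $\partial_\nu v_y=b_y$; this is routine but sign-sensitive, and a misplaced factor is the most likely source of error. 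The only genuinely delicate point is the membership $v\in\mathbb S$, since the $k_y\bar\mu^2$ term in the ansatz is not dominated by $1+|x|^2$ uniformly in $\bar\mu$; the resolution is that in the proof of Lemma \ref{l:verification} Dynkin's formula (Lemma \ref{l:chain rule}) is applied along the fixed process $(\mu_s,\nu_s)$, which stays in a compact set, so only growth in $x$ for bounded $(\mu,\nu)$ is needed.
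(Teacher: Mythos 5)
Your proposal follows essentially the same route as the paper: substitute the quadratic ansatz \eqref{eq:v1} into the coupled HJB system \eqref{eq:hjb1}, match the coefficients of the monomials $x^2, x, \bar\mu, x\bar\mu, \bar\mu^2, \bar\nu$ and the constant to obtain \eqref{eq:ode2}--\eqref{eq:ode2_terminal}, then invoke Lemma \ref{l:verification} using the boundedness of the coefficient functions on $[0,T]$. Your extra remark on why the $k_y\bar\mu^2$ term does not violate membership in $\mathbb S$ (the $(\mu,\nu)$ process stays in a compact set) addresses a point the paper passes over silently, but it does not change the argument.
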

	\begin{proof}
		With the form of value function $v_{y}$ given in \eqref{eq:v1} and the first and second moment of the conditional population density given in \eqref{eq:mu_sigma}, we have
		\begin{equation*}
			\begin{aligned}
				&\partial_t v_{y}=a_y'(t) x^2 +d_y'(t) x+e_y'(t)\bar \mu+f_y'(t)x\bar \mu + k_y'(t)\bar \mu^2 +  b_y'(t) \bar \nu + c_y'(t),\\
				& \partial_x v_{y}=2xa_{y}(t)+d_{y}(t)+f_y(t)\bar \mu,\\
				& \partial_{xx} v_{y}=2a_{y}(t),\\
				& \partial_{\bar\mu} v_y = e_y(t)+f_y(t) x + 2k_y(t)\bar \mu, \\
				& \partial_{\bar\nu} v_y = b_y(t), 
			\end{aligned}
		\end{equation*}
		for $y \in \mathcal Y$. Plugging them back to the coupled HJBs in \eqref{eq:hjb1}, we get a system of ODEs in \eqref{eq:ode2} by equating $x$, $\bar \mu$, $\bar \nu$-like terms in each equation.
		
		Therefore, any solution $(a_y, b_y, c_y,d_y, e_y, f_y, k_y: y \in \mathcal Y)$ of ODE system \eqref{eq:ode2} leads to the solution of HJB \eqref{eq:hjb1} in the form of the quadratic function given by \eqref{eq:Vhat}.
		Since the $(a_y, b_y, c_y, d_y, e_y, f_y, k_y: y \in \mathcal Y)$ are differentiable functions on the closed set $[0, T]$, they are also bounded, and the function $v$ meets regularity conditions 
		required by Lemma \ref{l:verification} to conclude the desired result.
	\end{proof}

	\subsection{Fixed point condition and the proof of Theorem \ref{t:main}}
	\label{s:proof_of_main_result}
	Going back to the ODE  system  \eqref{eq:ode2}, there are $7 \kappa$ equations, while we have total $13 \kappa$ deterministic functions of $[0, T]\times \mathbb R$ to be determined to characterize MFGs. Those are
	$$(a_y, b_y, c_y, d_y, e_y, f_y, k_y: y \in \mathcal Y) \hbox{ and } (w_{iy}: i = 0, 1, \dots 5, \ y \in \mathcal Y).$$
	%\textcolor{blue}{
	In the following,
	%} 
	we identify the missing $6 \kappa$ equations by checking 
	the fixed point condition:
	\begin{equation}\label{eq:fpc}
		\mu_s = \mathbb E \left[\left.\hat{X}_s \right\vert Y\right], \ \nu_s = \mathbb E \left[\left.\hat{X}_s^2 \right\vert Y\right], \quad \forall s \in [0, T],
	\end{equation}
	where $\mu$ and $\nu$ are two auxiliary processes $(\mu,\nu)[w]$ defined in \eqref{eq:mu_sigma}, see Figure \ref{fig:MFG2}. 
	This leads to a complete characterization of
	the equilibrium for the MFG posed by (P2).
	
	Note that based on the dynamic of the optimal $\hat{X}$ defined in \eqref{eq:Xhat}, 
	the fixed point condition \eqref{eq:fpc} implies that
	the first moment $\hat \mu_s := \mathbb E \left[\left.\hat{X}_s \right\vert Y\right]$  
	and the second moment $\hat \nu_s := \mathbb E \left[\left.\hat{X}_s^2 \right\vert Y\right]$  
	of the optimal path conditioned on $Y$ satisfy
	\begin{equation}
		\label{eq:mu_sigma_gen}
		\begin{aligned}
			\begin{cases}
				\displaystyle \hat \mu_s = \bar\mu +
				\int_t^s \left( \left( \tilde{b}_1(Y_r, r) - \tilde{b}_2^2(Y_r,r) \left( 2a_{Y_r}(r) + f_{Y_r}(r)\right) \right) \hat \mu_r - \tilde{b}_2^2(Y_r, r) d_{Y_r}(r) \right) dr, \\
				\displaystyle 
				\hat \nu_s = \bar\nu + \int_t^s  \left(1+2\tilde{b}_1(Y_r, r) \hat \nu_r -  \tilde{b}_2^2(Y_r, r) \left( 4a_{Y_r}(r) \hat \nu_r + 2 d_{Y_r}(r) \hat\mu_r + 2 f_{Y_r}(r)\hat\mu_r^2 \right)\right)  dr,
			\end{cases}
		\end{aligned}
	\end{equation}
	for $s\ge t$. Note that under the optimal control in \eqref{eq:alpha}, comparing the terms in \eqref{eq:mu_sigma} and  \eqref{eq:mu_sigma_gen}, we obtain another $6 \kappa$ equations:
	\begin{equation}\label{eq:w12}
	\begin{aligned}
		& w_{0y} = \tilde{b}_{1y} - 2 \tilde{b}_{2y}^2 a_y - \tilde{b}_{2y}^2 f_y, \ w_{1y} = - \tilde{b}_{2y}^2 d_y, \ w_{2y} = -2 \tilde{b}_{2y}^2 d_y, \\
		& w_{3y} = -4 \tilde{b}_{2y}^2 a_y + 2 \tilde{b}_{1y}, \ w_{4y} = -2 \tilde{b}_{2y}^2 f_y, \ w_{5y} = 1,
	\end{aligned}
	\end{equation}    
	for $y \in \mathcal Y$. Using further algebraic structures, one can reduce the ODE system of $13 \kappa$ equations composed by  \eqref{eq:ode2} and \eqref{eq:w12}
	into a system of $4 \kappa$ equations of the form \eqref{eq:ode1} for the MFG characterization in Theorem \ref{t:main}.
	
	\begin{proof}
		[{\bf Proof of Theorem \ref{t:main}}]
		Since $a_y$ ($y \in \mathcal Y$) has the same expressions as \eqref{eq:ode1}, its existence, uniqueness and boundedness are shown in Lemma \ref{l:eu_abc}. Given $a_y$ ($y \in \mathcal Y$) and smooth bounded $w$'s, $$\left(b_y,d_y,e_y,f_y: y \in \mathcal Y \right)$$ is a coupled linear system, and their existence, uniqueness and boundedness is shown by Theorem 12.1 in \cite{antsaklis2006}. Similarly, given $(b_y,d_y, f_y: y \in \mathcal Y$), $(k_y, c_y: y \in \mathcal Y$) is a linear system, and their existence and uniqueness is also guaranteed by Theorem 12.1 in \cite{antsaklis2006}. 
		
		The ODE system \eqref{eq:ode2} can be rewritten by
		\begin{equation*}
			\begin{aligned}
				\begin{cases}
				\displaystyle a_y' + 2\tilde{b}_{1y}a_y - 2 \tilde{b}_{2y}^2 a_y^2 + \sum_{i=1}^{\kappa} q_{y,i} a_i + h_y = 0, \\
				\displaystyle d_y' + \tilde{b}_{1y} d_y - 2 \tilde{b}_{2y}^2 a_y d_y - \tilde{b}_{2y}^2 f_y d_y  + \sum_{i=1}^{\kappa} q_{y,i} d_i = 0 , \\
				\displaystyle e_y' -\tilde{b}_{2y}^2 d_y f_y -2 \tilde{b}_{2y}^2 k_y d_y+ e_y \left(\tilde{b}_{1y} - 2 \tilde{b}_{2y}^2 a_y - \tilde{b}_{2y}^2 f_y \right) -2 \tilde{b}_{2y}^2 b_y d_y + \sum_{i=1}^{\kappa} q_{y,i} e_i = 0,\\
				\displaystyle f_y' + \tilde{b}_{1y} f_y - 2 \tilde{b}_{2y}^2 a_y f_y + f_y \left(\tilde{b}_{1y} - 2 \tilde{b}_{2y}^2 a_y - \tilde{b}_{2y}^2 f_y \right)+ \sum_{i=1}^{\kappa} q_{y,i} f_i - 2h_y  = 0, \\
			    \displaystyle k_y' -\frac{1}{2} \tilde{b}_{2y}^2 f_y^2 + 2k_y \left(\tilde{b}_{1y} - 2 \tilde{b}_{2y}^2 a_y - \tilde{b}_{2y}^2 f_y \right) -2 \tilde{b}_{2y}^2b_y f_y + \sum_{i=1}^{\kappa} q_{y,i} k_i= 0, \\
				\displaystyle b_y' +b_y \left(-4 \tilde{b}_{2y}^2 a_y + 2 \tilde{b}_{1y} \right) + \sum_{i=1}^{\kappa} q_{y,i} b_i + h_y = 0, \\
				\displaystyle c_y' + a_y -\frac{1}{2}\tilde{b}_{2y}^2 d_y^2 -2 \tilde{b}_{2y}^2 d_y e_y + b_y + \sum_{i=1}^{\kappa} q_{y,i} c_i = 0,
			\end{cases}
			\end{aligned}
		\end{equation*}
		with the terminal conditions
		\begin{equation*}
			\begin{aligned}
				&a_y(T) = g_y, \ b_y(T) = g_y, \  c_y(T) = 0, \ d_y(T) = 0, \ e_y(T) = 0, \ f_y(T) = -2g_y, \ k_y(T) = 0.
			\end{aligned}
		\end{equation*}
		
		Since $a_y, b_y$ ($y \in \mathcal Y$) has the same expressions as \eqref{eq:ode1}, 
		its existence, uniqueness and boundedness are shown in Lemma \ref{l:eu_abc}. Meanwhile, with the given 
		$(a_y, b_y: y \in \mathcal Y)$, we denote $l_y = 2a_y+f_y$, and then
		\begin{equation*}
			l_y' + 2 \tilde{b}_{1y} l_y - \tilde{b}_{2y}^2 l_y^2 + \sum_{i=1}^{\kappa} q_{y,i} l_i = 0 \text{ , } l_y(T) = 0.
		\end{equation*}
		By Lemma \ref{l:existN} and Lemma \ref{l:bdN} in Appendix, there exists a unique solution for $l_y \ (y \in \mathcal Y)$, which is $l_y = 0, y \in \mathcal Y$. This gives $f_y = -2a_y$ and $d_y' + \tilde{b}_{1y} d_y + \sum_{i=1}^{\kappa} q_{y,i} d_i = 0$, which implies $d_y = 0, y \in \mathcal Y$. Then, the equation for $e_y$ can be simplified as $e_y' + \tilde{b}_{1y} e_y + \sum_{i=1}^{\kappa} q_{y,i} e_i = 0$, which indicates that $e_y = 0, y \in \mathcal Y$. For $k_y, c_y$, with the given of $(a_y, b_y: y \in \mathcal Y)$, we have
		\begin{equation*}
		\begin{aligned}
		 & k_y' + 2 \tilde{b}_{1y} k_y - 2 \tilde{b}_{2y}^2 a_y^2  + 4 \tilde{b}_{2y}^2 a_y b_y + \sum_{i=1}^{\kappa} q_{y,i} k_i = 0 \text{ , } k_y(T) = 0, \\
		 & c_y' + a_y + b_y + \sum_{i=1}^{\kappa} q_{y,i} c_i = 0 \text{ , } c_y(T) = 0.
		\end{aligned}
		\end{equation*}%https://www.overleaf.com/project/60468ef68e315f4d63ae2869
		The existence and uniqueness of the solution for $k_y, c_y \ (y \in \mathcal Y)$ are yielded by Theorem 12.1 in \cite{antsaklis2006}.

		Note that in this case, since $2a_y + f_y = 0$ and $d_y=0$ for $y \in \mathcal Y$, from \eqref{eq:mu_sigma_gen} we have $$\hat{\mu}_s = \bar\mu + \int_t^s \tilde{b}_1(Y_r, r) \hat{\mu}_r \ d r$$
		for all $s \in [t, T]$. Then
		$$\hat\nu_s = \bar\nu + \int_t^s \left(1 + 2 \tilde{b}_1(Y_r, r) \hat{\nu}_r - 4 \tilde{b}_2^2(Y_r, r) a_{Y_r}(r) \hat{\nu}_r + 4 \tilde{b}_2^2(Y_r, r) a_{Y_r}(r) \hat{\mu}_r^2 \right) \, d r.$$
		Plugging $d_y = 0$ for $y \in \mathcal Y$ back to \eqref{eq:alpha}, we obtain the optimal control by
		$$\hat{\alpha}_s = -2 \tilde{b}_2^2(Y_s, s) a_{Y_s}(s) \left(\hat{X}_s - \hat{\mu}_s \right)
		%-2a_{Y_t}\hat{X}_t
		.$$
		Since we have $d_y = 0$ for $y \in \mathcal Y$,  the value function can be simplified from  \eqref{eq:v1} to
		$$v_y(x, t,\bar\mu, \bar\nu) = a_y(t) x^2 {-2a_y(t)x\bar\mu + k_y(t)\bar\mu^2}+ b_y(t) \bar \nu + c_y(t).$$
		By the equivalence Lemma \ref{l:mkv}, it  yields the value function $U$ of Theorem \ref{t:main} .
		Moreover, 
		{since $f_y = -2a_y$ and $k_y \ne 0$,} the ODE system \eqref{eq:ode2} together with \eqref{eq:w12} can be reduced into \eqref{eq:ode1}. From the Lemma \ref{l:eu_abc}, the existence and uniqueness of $(a_y, b_y, c_y, k_y: y \in \mathcal Y)$ in \eqref{eq:ode1} is guaranteed.

	\end{proof}

	\section{The $N$-Player Game and its Convergence to MFGs}
	\label{s:section4}
	
	In this section, we show the convergence of the $N$-player game to MFGs. 
	To simplify the presentation, we may omit the superscript $(N)$ for the processes in the probability space $\Omega^{(N)}$, whenever there is no confusion. 
	First, we solve the $N$-player game in Subsection~\ref{s:raw}, which provides a Riccati system consisting of $O(N^3)$ equations. Subsection~\ref{s:simplify}
	reduces the corresponding Riccati system into an ODE system whose dimension is independent of $N$.
	This becomes the key building block of the convergence rate obtained in Subsection~\ref{s:convergence}.
	To obtain the convergence rate, Subsection \ref{s:convergence} provides an explicit embedding of some processes in $\Omega^{(N)}$ into the probability space $\Omega$. Note that, $\Omega^{(N)}$ is much richer than $\Omega$ since $\Omega^{(N)}$
	contains $N$ Brownian motions while $\Omega$ has only two Brownian motions. Therefore, careful treatment has to be carried out to some processes of our interest, otherwise, such an embedding is in general implausible.

	\subsection{Characterization of the $N$-player game by Riccati system}\label{s:raw}
	The $N$-player game is indeed an $N$-coupled stochastic LQG problem by its very own definition, see Subsection \ref{s:n-player}. Therefore, the solution can be derived via Riccati system with the existing LQG theory given below:
	For $i = 1, 2, \ldots, N$, $y \in \mathcal Y$,
	\begin{equation}
		\label{eq:ABC}
		\begin{cases}
			\displaystyle A_{iy}' + 2 \tilde{b}_{1y} e_i e_i^\top A_{iy} - 2 \tilde{b}_{2y}^2 A_{iy}^\top e_i e_i^\top A_{iy} + \sum_{j \ne i}^{N} \left(2\tilde{b}_{1y} e_j e_j^\top A_{iy} -4 \tilde{b}_{2y}^2 A_{jy}^\top e_je_j^\top A_{iy} \right) \\
			\displaystyle \hspace{1.5 in} + \sum_{j=1}^{\kappa} q_{y,j} A_{ij} + \frac{h_y}{N} \sum_{j \ne i}^{N} \left(e_i-e_j\right) \left(e_i-e_j\right)^\top= 0,\\
			\displaystyle B_{iy}' + \sum_{j \ne i}^{N} \left(\tilde{b}_{1y} e_j e_j^\top B_{iy} -2 \tilde{b}_{2y}^2 A_{iy}^\top e_je_j^\top B_{jy} - 2 \tilde{b}_{2y}^2 A_{jy}^\top e_j e_j^\top B_{iy}\right) \\
			\displaystyle \hspace{1.5 in} + \tilde{b}_{1y} e_i e_i^\top B_{iy} -2 \tilde{b}_{2y}^2 A_{iy}^\top e_i e_i^\top B_{iy} + \sum_{j=1}^{\kappa} q_{y,j} B_{ij} = 0,  \\
			\displaystyle C_{iy}' -\frac{1}{2} \tilde{b}_{2y}^2 B_{iy}^\top e_ie_i^\top B_{iy} - \sum_{j \ne i}^{N} \tilde{b}_{2y}^2 B_{jy}^\top e_je_j^\top B_{iy} + \sum_{j=1}^{N} tr(A_{jy}) + \sum_{j=1}^{\kappa} q_{y,j} C_{ij} =0,\\
			\displaystyle A_{iy}(T) = \frac{g_y}{N}\Lambda_i, \ B_{iy}(T) = 0 \cdot \mathds{1}_{N}, \ C_{iy}(T) = 0,
		\end{cases}
	\end{equation}
	where the solutions consist of 
	$N\times N$ symmetric matrices $A_{iy}$'s,
	$N$-dimensional vectors $B_{iy}$'s,
	and $C_{iy} \in \mathbb R$.  
	In the above, $\mathds{1}_{N}$ is the $N$-dimensional vector with all entries are $1$, $\Lambda_i$'s are $N \times N$ matrices with diagonal $1$ except $(\Lambda_i)_{ii} = N-1$, $\left(\Lambda_i\right)_{ij} = \left(\Lambda_i\right)_{ji} = -1$ for any $j \ne i$ and the rest entries as 0, and $e_i$'s are the $N$-dimensional natural basis. %and $\partial_i v_j = \frac{\partial v_j}{\partial x_i}$.
	
	\begin{lemma}
		\label{l:riccati-N}
		Suppose $(A_{iy}, B_{iy}, C_{iy}: i = 1, 2, \ldots, N, \ y \in \mathcal Y)$ is the solution of \eqref{eq:ABC}.
		Then,  
		the value functions of $N$-player game defined by \eqref{eq:value_i} are
		$$V_i(y, x^{(N)}) = 
		(x^{(N)})^\top A_{iy}(0) x^{(N)} + (x^{(N)})^\top B_{iy}(0) + C_{iy}(0), \quad i=1, 2, \dots, N.$$ 
		Moreover, the path and the control under the equilibrium are
		\begin{equation}
			\label{eq:Xihat}
			d \hat X_{it} = \left( \tilde{b}_1(Y_t, t) \hat X_{it} - \tilde{b}_2^2(Y_t, t) \left( 2 (A_{i  Y_t})_i^\top \hat X_t +(B_{iY_{t}})_i \right)\right) dt + dW_{it}, \quad i=1, 2, \dots, N,
		\end{equation}
		and
		$$ \hat{\alpha}_{it}   = - \tilde{b}_2(Y_t, t) \left(2 (A_{i  Y_t})_i^\top \hat X_t +(B_{iY_{t}})_i \right),$$
		where $(A)_i$ denotes the $i$-th column of matrix $A$, $(B)_i$ denotes the $i$-th entry of vector $B$ and $\hat X_t = [\hat X_{1t}, \hat X_{2t}, \dots, \hat  X_{Nt}]^{\top}$.
	\end{lemma}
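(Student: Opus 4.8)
The plan is to read Lemma \ref{l:riccati-N} as a Nash-equilibrium verification statement and to prove it one player at a time, exactly as in the single-agent verification Lemma \ref{l:verification}. Fix $i \in \{1, \dots, N\}$ and freeze the other players at the feedback controls $\hat\alpha_{kt} = -\tilde b_2(Y_t, t)\bigl(2 (A_{kY_t}(t))_k^\top X_t + (B_{kY_t}(t))_k\bigr)$, $k \ne i$. With this profile frozen, the $N$-dimensional state $X_t = (X_{1t}, \dots, X_{Nt})^\top$ of \eqref{eq:Xi} becomes affine in $X_t$: coordinate $k \ne i$ has drift $\tilde b_{1Y_t} X_{kt} - \tilde b_{2Y_t}^2\bigl(2 e_k^\top A_{kY_t}(t) X_t + (B_{kY_t}(t))_k\bigr)$, coordinate $i$ has drift $\tilde b_{1Y_t} X_{it} + \tilde b_{2Y_t}\alpha_{it}$, each coordinate is driven by its own independent Brownian motion, and the running/terminal costs in \eqref{eq:cost_N} are, by \eqref{eq:running cost}--\eqref{eq:term cost}, quadratic forms in $X_t$ with $Y_t$-dependent coefficients. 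So player $i$ faces a classical regime-switching LQG control problem, and the whole lemma reduces to solving that problem and checking that its optimizer is the claimed $\hat\alpha_i$.

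For the subproblem I would use the quadratic ansatz $V_i(y, x, t) = x^\top A_{iy}(t) x + x^\top B_{iy}(t) + C_{iy}(t)$ with symmetric $A_{iy}$, write down its coupled HJB system (the $\tfrac12\sum_{k}\partial_{x_k x_k}$ term from the $N$ Brownian motions, the frozen affine drifts above, the generator $\sum_j q_{y,j} V_{ij}$ of $Y$, the quadratic cost, and $\inf_a\{\tilde b_{2y} a\,\partial_{x_i} V_{iy} + \tfrac12 a^2\}$), carry out the pointwise minimization — which gives $a^\ast = -\tilde b_2(y,t)\,\partial_{x_i} V_{iy} = -\tilde b_2(y,t)\bigl(2 e_i^\top A_{iy}(t) x + (B_{iy}(t))_i\bigr)$, precisely the stated $\hat\alpha_{it}$ once one uses that $A_{iy}$ is symmetric — then substitute $a^\ast$ back and match the $x^\top(\cdot)x$, $x^\top(\cdot)$, and constant parts of the resulting identity. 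After symmetrizing the $N\times N$ quadratic form this yields exactly the matrix, vector, and scalar ODEs of \eqref{eq:ABC}, while the terminal data $A_{iy}(T) = \tfrac{g_y}{N}\Lambda_i$, $B_{iy}(T)=0$, $C_{iy}(T)=0$ come from rewriting $G(Y_T, X_{iT}, \rho(X_T)) = \tfrac{g_{Y_T}}{N}\sum_{j \ne i}(X_{iT}-X_{jT})^2 = \tfrac{g_{Y_T}}{N}\, X_T^\top \Lambda_i X_T$.

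The verification step is then routine. Given a solution of \eqref{eq:ABC} on $[0,T]$, the coefficients $A_{iy}, B_{iy}, C_{iy}$ are continuous, hence bounded, so the feedback map $x \mapsto -\tilde b_2(y,t)\bigl(2 e_i^\top A_{iy}(t) x + (B_{iy}(t))_i\bigr)$ is uniformly Lipschitz in $x$; the closed-loop system \eqref{eq:Xihat} (all $N$ players using their feedback controls) therefore has a unique strong solution with $\mathbb E[\sup_{t\le T}|\hat X_t|^4] < \infty$, and each $\hat\alpha_i \in \mathcal A^{(N)} = L^4_{\mathbb F^{(N)}}$. Applying Dynkin's formula (Lemma \ref{l:chain rule}) to $V_i(Y_s, X_s, s)$ along an arbitrary admissible $\alpha_i$ with $\hat\alpha_{-i}$ frozen, and using that the HJB makes the integrand dominate the instantaneous cost for every control value with equality at $a = \hat\alpha_{it}$ — word for word as in Lemma \ref{l:verification} — identifies $(x^{(N)})^\top A_{iy}(0) x^{(N)} + (x^{(N)})^\top B_{iy}(0) + C_{iy}(0)$ as $V_i^N(y, x^{(N)})$ and shows $\hat\alpha_i$ is a best response to $\hat\alpha_{-i}$. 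Since this holds simultaneously for each $i$ with one and the same profile $\hat\alpha = (\hat\alpha_1, \dots, \hat\alpha_N)$, the equilibrium condition \eqref{eq:value_i} is met, and the associated closed-loop dynamics are \eqref{eq:Xihat}.

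The main obstacle is organizational rather than analytic: unlike a single-agent HJB, the $N$ equations here are genuinely coupled — player $i$'s HJB contains $A_{ky}$ and $B_{ky}$ for all $k \ne i$ through the frozen drifts — so one cannot solve them sequentially, and the content of the lemma is exactly that the large coupled Riccati system \eqref{eq:ABC} simultaneously encodes all $N$ best responses. The delicate part of the write-up is therefore the algebra: tracking the $\sum_{k\ne i}$ cross-terms produced by the frozen drifts and symmetrizing the $N\times N$ quadratic forms so that the matched coefficients land precisely on \eqref{eq:ABC}; the dimension of this system is $O(\kappa N^3)$, which is what forces the $N$-independent reduction carried out in the next subsection.
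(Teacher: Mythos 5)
Your proposal is correct and follows essentially the same route as the paper: the paper likewise writes the coupled HJB system \eqref{V} with the opponents' equilibrium feedback $-\tilde b_{2y}^2\,\partial_j v_{jy}$ frozen into player $i$'s drift, posits the quadratic ansatz $v_{iy}(x^{(N)},t)=(x^{(N)})^\top A_{iy}(t)x^{(N)}+(x^{(N)})^\top B_{iy}(t)+C_{iy}(t)$, matches coefficients to obtain \eqref{eq:ABC}, and invokes Dynkin's formula for the verification, omitting the details. Your write-up simply makes explicit the per-player best-response verification and the admissibility of the closed-loop controls, which the paper leaves implicit.
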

	\begin{proof}
	%\textcolor{blue}{
	It is standard that, under enough regularities, the value function $V (y, x^{(N)})= (V_1, V_2, \dots, V_N) (y, x^{(N)})$ of the $N$-player game can be lifted to  the solution $v_{iy}(x^{(N)}, t)$ of the following system of HJB equations,
	%} 
	for $i = 1, 2, \ldots, N$ and $y \in \mathcal Y$, 
		\begin{align}
			\begin{cases}
				\displaystyle \partial_t v_{iy} + \tilde{b}_{1y} x_i \partial_i v_{iy} - \frac{1}{2}\left( \tilde{b}_{2y} \partial_i v_{iy}\right)^2 + \sum_{j \ne i}^{N} \left(\tilde{b}_{1y} x_j - \tilde{b}_{2y}^2 \partial_j v_{jy} \right) \partial_j v_{iy} \\
				\displaystyle \hspace{1.5 in} + \frac{1}{2}\Delta v_{iy} + \sum_{j=1}^{\kappa} q_{y,j} v_{ij}
				+ \frac{h_y}{N} \sum_{j \ne i}^{N} \left(\left(e_i-e_j\right)^\top x^{(N)}\right)^2= 0, \\
				\displaystyle v_{iy}(x^{(N)},T) = \frac{g_y}{N}\sum_{j \ne i}^{N} \left((e_i-e_j)^\top x^{(N)}\right)^2.
			\end{cases}
			\label{V}
		\end{align}
		Then, 
		the value functions $V$ of $N$-player game defined by \eqref{eq:value_i} is
		$V_i(y, x^{(N)}) = v_{iy}(x^{(N)}, 0)$ for all $i=1, 2, \dots, N$. Moreover, the path and the control under the equilibrium are
		$$d \hat X_{it} = \left(\tilde{b}_1 (Y_t, t) \hat X_{it} -\tilde{b}_2^2 (Y_t, t) \partial_i v_{i Y_t} (\hat X_t, t) \right) dt + dW_{it},\quad  i=1, 2, \dots, N,$$
		and
		$$ \hat{\alpha}_{it}   = -\tilde{b}_2 (Y_t, t) \partial_i v_{i Y_t} (\hat X_t, t).$$
		The proof is the application of Dynkin's formula and the details are  omitted here.
		Due to its LQG structure, the value function leads to a quadratic function of the form
		$$v_{iy}(x^{(N)}, t) = (x^{(N)})^\top A_{iy}(t) x^{(N)} + (x^{(N)})^\top B_{iy}(t) + C_{iy}(t).$$ 
		For each $i = 1, 2, \dots, N$, after plugging $V_{iy}$ into \eqref{V}, and matching the coefficient of variables, we get the desired results.
	\end{proof}

	\subsection{Reduced Riccati form for the equilibrium}\label{s:simplify}
	So far, the $N$-player game and MFG have been characterized by Lemma \ref{l:riccati-N} and Theorem \ref{t:main}, respectively.
	%Riccati system \eqref{eq:ABC}. 
	One of our main objectives is to investigate the convergence of the generic 
	optimal path $\hat X_{1t}^{(N)}$ of $N$-player game generated 
	\eqref{eq:ABC}-\eqref{eq:Xihat} 
	  to the optimal path 
	$\hat X_t$ of MFG generated by \eqref{eq:ode1}-\eqref{eq:Xhat03}. 
	
	Note that $\hat X_t$  relies only on $\kappa$ functions
	$(a_y: y \in \mathcal Y)$ from the simple ODE system \eqref{eq:ode1} 
	while $\rho(\hat X_t^{(N)})$ depends on $O(N^3)$ functions from
	$(A_{iy}: i= 1, 2, \ldots, N, \  y \in \mathcal Y)$ solved from a huge Riccati system \eqref{eq:ABC}. 
	Therefore, it is almost a hopeless task for a meaningful comparison between these two processes 
	without gaining further insight into the complex structure of the Riccati system \eqref{eq:ABC}.
	
	To proceed, let us first observe some hidden patterns from a numerical result for the solution of Riccati \eqref{eq:ABC}.
	The following matrix shows $A_{20}$ at $t = 1$ for $N = 5$ with the same parameters as in 
	%\textcolor{blue}{
	Figure \ref{fig: figure 1} and Figure \ref{fig: figure 2} in Section \ref{s:simu}:
	%}
\begin{equation*}
\label{eq:table}
A_{20} (1) = 
\begin{bmatrix}
0.1319	& -0.1924 &	0.0202	& 0.0202 & 0.0202\\
			-0.1924 & 0.7696 & -0.1924 & -0.1924 & -0.1924\\
			0.0202 &-0.1924 & 0.1319 & 0.0202 &	0.0202 \\
			0.0202 & -0.1924 & 0.0202 & 0.1319 & 0.0202 \\
			0.0202 & -0.1924 &	0.0202 & 0.0202 & 0.1319
\end{bmatrix}.
\end{equation*}

	Interestingly enough, we observe that the entire 25 entries of $A_{20}(1)$ indeed consists of $4$ distinct values. Moreover, similar computation with different values of $N$ only yields a larger table depending on $N$, but always consists of $4$ values.
	Inspired by this accidental discovery from the above numerical example, we may want to believe and prove a pattern of the matrix $A_{iy}$ in the following form:
	\begin{equation}\label{eq:magic}
		(A_{iy})_{pq} = \begin{cases}
			a_{1y}(t), & \text{ if } p=q=i,\\
			a_{2y}(t), & \text{ if } p=q\ne i,\\
			a_{3y}(t), & \text{ if } p\ne q, p = i \text{ or } q = i, \\
			a_{4y}(t), & \text{ otherwise},
		\end{cases}
	\end{equation}
	for $y \in \mathcal Y$. The next result justifies the above pattern: 
	the $N^2$ entries of the matrix $A_{iy}$ can be embedded to a $2 \kappa$-dimensional vector space no matter how big $N$ is.

	\begin{lemma}
		\label{l:ABCexist}
		There exists a unique solution $(a_{1y}^{N} , a_{2y}^{N} )$ from the ODE system\eqref{eq:a_12}
		\begin{equation}
			\label{eq:a_12}
			\begin{cases}
				\displaystyle a_{1y}' + 2 \tilde{b}_{1y} a_{1y} - \frac{2(N+1)}{N-1} \tilde{b}_{2y}^2 a_{1y}^2 + \sum_{j=1}^{\kappa} q_{y,j} a_{1j} + \frac{N-1}{N} h_y = 0, \\
				\displaystyle a_{2y}' + 2 \tilde{b}_{1y} a_{2y} +\frac{2}{(N-1)^2} \tilde{b}_{2y}^2 a_{1y}^2 -\frac{4N}{N-1} \tilde{b}_{2y}^2 a_{1y} a_{2y} + \sum_{j=1}^{\kappa} q_{y,j} a_{2j} + \frac{h_y}{N} = 0,\\
				\displaystyle a_{1y}(T) = \frac{N-1}{N} g_y, \ a_{2y}(T) = \frac{g_y}{N},
			\end{cases}
		\end{equation}
		for $y \in \mathcal Y$.
		Moreover, the path and the control of player $i$ under the equilibrium are
		\begin{equation}
			\label{eq:XihatN}
			d \hat X_{it}^{(N)}  = \left( \tilde{b}_1(Y_t^{(N)}, t) \hat X_{it}^{(N)} -2 \tilde{b}_2^2(Y_t^{(N)}, t) a_{1Y_t^{(N)}}^{N} (t) \left( 
		\hat X_{it}^{(N)} -\frac{1}{N-1}\sum_{j \ne i}^N  \hat X_{jt}^{(N)}
		\right) \right) dt + dW_{it}^{(N)},
		\end{equation}
		and
		$$ \hat{\alpha}_{it}^{(N)} = -2 \tilde{b}_2(Y_t^{(N)}, t) a_{1Y_t^{(N)}}^{N} (t) \left( 
		\hat X_{it}^{(N)} -\frac{1}{N-1}\sum_{j \ne i}^N  \hat X_{jt}^{(N)}
		\right)$$
		for $i=1, 2, \dots, N$.
	\end{lemma}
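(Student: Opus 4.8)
The plan is to exploit the permutation symmetry of the $N$-player game to collapse the $O(N^3)$-dimensional Riccati system \eqref{eq:ABC} onto the two-dimensional family \eqref{eq:a_12}, and then to read off the equilibrium dynamics from Lemma \ref{l:riccati-N}. First I would check that the structured ansatz \eqref{eq:magic} is compatible with \eqref{eq:ABC}. The terminal datum $A_{iy}(T)=\tfrac{g_y}{N}\Lambda_i$ already has the form \eqref{eq:magic}, with $a_{1y}(T)=\tfrac{N-1}{N}g_y$, $a_{2y}(T)=\tfrac{g_y}{N}$, $a_{3y}(T)=-\tfrac{g_y}{N}$, $a_{4y}(T)=0$; and since the game is invariant under relabeling the players, $A_{jy}$ for $j\ne i$ has the same form \eqref{eq:magic} with the index $i$ replaced by $j$ and the \emph{same} scalar functions. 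Plugging these into \eqref{eq:ABC} and using that $e_je_j^\top$ acts by extracting the $j$-th row and column — so that each product $A_{jy}^\top e_je_j^\top A_{iy}$ is the outer product of a $j$-th column and a $j$-th row — one verifies that, after summing over $j\ne i$, every term again lands in the four-dimensional space of matrices of the form \eqref{eq:magic}. Matching the four entry types then yields a closed system of $4\kappa$ scalar ODEs for $(a_{1y},a_{2y},a_{3y},a_{4y}:y\in\mathcal Y)$.

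Second I would reduce these four functions to two. Setting $v_{iy}:=A_{iy}\mathds{1}_N$ and right-multiplying the $A_{iy}$-equation in \eqref{eq:ABC} by $\mathds{1}_N$, the source term drops out because $(e_i-e_j)^\top\mathds{1}_N=0$, so $(v_{iy})$ solves a linear homogeneous system with terminal value $v_{iy}(T)=\tfrac{g_y}{N}\Lambda_i\mathds{1}_N=0$; hence $v_{iy}\equiv 0$. Equating row sums in \eqref{eq:magic} gives $a_{1y}+(N-1)a_{3y}=0$ and $a_{3y}+a_{2y}+(N-2)a_{4y}=0$, i.e. $a_{3y}=-\tfrac{1}{N-1}a_{1y}$. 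Substituting $a_{3y}=-a_{1y}/(N-1)$ into the $(i,i)$ and $(j,j)$ ($j\ne i$) equations of the four-dimensional system produces exactly \eqref{eq:a_12}, and the equations for $a_{3y},a_{4y}$ become redundant. Existence, uniqueness and boundedness for the $a_{1y}$-block — a Riccati system with positive quadratic coefficient $\tfrac{2(N+1)}{N-1}\tilde b_{2y}^2$ and positive forcing $\tfrac{N-1}{N}h_y$, $\tfrac{N-1}{N}g_y$ — follow from the same argument as for the $a_y$-block of \eqref{eq:ode1}, i.e. Lemma \ref{l:eu_abc} (equivalently Lemmas \ref{l:existN} and \ref{l:bdN}); given the bounded $a_{1y}$, the $a_{2y}$-block is linear and globally solvable by Theorem 12.1 in \cite{antsaklis2006}. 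Running the ansatz in reverse, $(a_{1y},a_{2y})$ reconstructs a solution of \eqref{eq:ABC} on all of $[0,T]$; combined with $B_{iy}\equiv 0$ (the unique solution of its homogeneous terminal-value problem) and the $C_{iy}$ from its linear equation, together with local uniqueness for \eqref{eq:ABC}, this is \emph{the} solution, so Lemma \ref{l:riccati-N} applies.

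Finally I would substitute into the feedback form of Lemma \ref{l:riccati-N}: the $i$-th column of $A_{iy}$ under \eqref{eq:magic} equals $a_{1y}e_i+a_{3y}(\mathds{1}_N-e_i)$, so
$$(A_{iY_t})_i^\top\hat X_t=a_{1Y_t}\hat X_{it}+a_{3Y_t}\sum_{j\ne i}\hat X_{jt}=a_{1Y_t}\Bigl(\hat X_{it}-\tfrac{1}{N-1}\sum_{j\ne i}\hat X_{jt}\Bigr),$$
using $a_{3y}=-a_{1y}/(N-1)$, while $(B_{iY_t})_i=0$. Plugging these into \eqref{eq:Xihat} gives \eqref{eq:XihatN} and the stated feedback control.

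I expect the main obstacle to be the bookkeeping in the first step: confirming that the matrix products in \eqref{eq:ABC} generate no entry pattern outside \eqref{eq:magic}, and tracking the combinatorial factors $N-1$, $N-2$ carefully enough that the reduced constants $\tfrac{2(N+1)}{N-1}$, $\tfrac{2}{(N-1)^2}$, $\tfrac{4N}{N-1}$ in \eqref{eq:a_12} come out exactly right; the analytic parts (well-posedness of \eqref{eq:a_12}, vanishing of $B_{iy}$, the feedback computation) are then routine given the earlier lemmas.
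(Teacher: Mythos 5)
Your proposal is correct and follows the paper's overall strategy — impose the ansatz \eqref{eq:magic} on \eqref{eq:ABC}, reduce to \eqref{eq:a_12}, get well-posedness of the $a_{1y}$-block from Lemmas \ref{l:existN}--\ref{l:bdN} and of the linear $a_{2y}$-block from Theorem 12.1 of \cite{antsaklis2006}, and read off \eqref{eq:XihatN} from the feedback form of Lemma \ref{l:riccati-N} after noting $B_{iy}\equiv 0$. Where you genuinely diverge is in the crucial step that pins down $a_{3y}=-\tfrac{1}{N-1}a_{1y}$. The paper gets this by observing that the ansatz produces two distinct ODEs for $a_{3y}$ (from the $(i,q)$ and $(p,i)$ entry types), forcing $a_{1y}+(N-2)a_{3y}=a_{2y}+(N-2)a_{4y}$; differentiating that identity and substituting back yields the quadratic constraint $a_{1y}^2+(N-2)a_{1y}a_{3y}-(N-1)a_{3y}^2=0$, which factors as $a_{3y}=a_{1y}$ or $a_{3y}=-\tfrac{1}{N-1}a_{1y}$, and the first root is excluded. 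Your route — right-multiplying the $A_{iy}$-equation by $\mathds{1}_N$, noting that both the source term $\sum_{j\ne i}(e_i-e_j)(e_i-e_j)^\top$ and the terminal datum $\tfrac{g_y}{N}\Lambda_i$ annihilate $\mathds{1}_N$, so that $A_{iy}\mathds{1}_N$ solves a linear homogeneous terminal-value problem and vanishes identically — is cleaner and arguably more illuminating: it delivers both linear relations (row $i$ and row $p\ne i$) at once, avoids the quadratic factorization and the need to discard a spurious root, and explains structurally why the reduction works (the cost depends only on differences of states). I verified that substituting $a_{3y}=-a_{1y}/(N-1)$ and $(N-2)a_{4y}=-a_{2y}-a_{3y}$ into the $(i,i)$ and $(p,p)$ equations reproduces the coefficients $\tfrac{2(N+1)}{N-1}$, $\tfrac{2}{(N-1)^2}$, $\tfrac{4N}{N-1}$ of \eqref{eq:a_12} exactly. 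One caveat applying equally to your write-up and the paper's: both arguments verify that the symmetric ansatz is \emph{consistent} with \eqref{eq:ABC} rather than proving every solution has this form; your closing remark about running the ansatz in reverse and invoking local uniqueness of \eqref{eq:ABC} is the right way to close that gap, and is in fact slightly more careful than the published proof.
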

	\begin{proof}
		It is obvious to see that in the Riccati system \eqref{eq:ABC}, $B_{iy} = 0$ for all $i = 1, 2, \dots, N$ and $y \in \mathcal Y$. Note that in this case, for $i = 1, 2, \dots, N$, the  optimal control is given by
		\begin{align}
			\hat{\alpha}_i^{(N)}  = - 2 \tilde{b}_2 (Y_t^{(N)}, t) \sum_{j =1}^N (A_{i  Y_t^{(N)}})_{ij} \hat X_{jt}^{(N)} = -2 \tilde{b}_2 (Y_t^{(N)}, t) \left(A_{i  Y_t^{(N)}}\right)_i^\top \hat X^{(N)}_t. \notag
		\end{align}
		
		Plugging the pattern \eqref{eq:magic} into the differential equation of $A_{iy}$, we have
		\begin{equation*}
			\begin{aligned}
				& a_{1y}' + 2\tilde{b}_{1y} a_{1y} - 2 \tilde{b}_{2y}^2 a_{1y}^2 - 4(N-1) \tilde{b}_{2y}^2 a_{3y}^2 + \sum_{j=1}^{\kappa} q_{y,j} a_{1j} + \frac{N-1}{N} h_y = 0, \notag \\
				& a_{2y}' + 2 \tilde{b}_{1y} a_{2y} - 2 \tilde{b}_{2y}^2 a_{3y}^2 - 4 \tilde{b}_{2y}^2 \left( a_{1y}a_{2y} + (N-2)a_{3y}a_{4y} \right) + \sum_{j=1}^{\kappa} q_{y,j} a_{2j} + \frac{h_y}{N} = 0,\notag \\
				& a_{3y}' + 2 \tilde{b}_{1y} a_{3y} - 2 \tilde{b}_{2y}^2 a_{1y}a_{3y} - 4  \tilde{b}_{2y}^2 \left( a_{1y}a_{3y}+ (N-2)a_{3y}^2 \right) + \sum_{j=1}^{\kappa} q_{y,j} a_{3j} - \frac{h_y}{N} = 0, \notag\\
				& a_{3y}' + 2 \tilde{b}_{1y} a_{3y} - 2 \tilde{b}_{2y}^2 a_{1y}a_{3y} - 4  \tilde{b}_{2y}^2 \left( a_{2y}a_{3y}+ (N-2)a_{3y} a_{4y} \right) + \sum_{j=1}^{\kappa} q_{y,j} a_{3j} - \frac{h_y}{N} = 0, \notag\\
				& a_{4y}' + 2 \tilde{b}_{1y} a_{4y} -2 \tilde{b}_{2y}^2 a_{3y}^2 -4 \tilde{b}_{2y}^2 \left(a_{2y}a_{3y} + a_{1y}a_{4y} + (N-3)a_{3y}a_{4y} \right) + \sum_{j=1}^{\kappa} q_{y,j} a_{4j} = 0,   
			\end{aligned}
		\end{equation*}
		which gives $a_{1y} +(N-2)a_{3y} = a_{2y} + (N-2) a_{4y}$ since two expressions for $a_{3y}$ should be identical. 
		This implies that $\left(a_{1y} +(N-2)a_{3y}\right)' =\left( a_{2y} + (N-2) a_{4y}\right)'$ or
		\begin{equation*}
			\begin{aligned}
		       &-2\tilde{b}_{1y} a_{1y} + 2\tilde{b}_{2y}^2 a_{1y}^2+ 4(N-1)\tilde{b}_{2y}^2 a_{3y}^2 -\frac{N-1}{N} h_y - \sum_{j=1}^{\kappa} q_{y,j} a_{1j} \\
			    &+(N-2) \left(-2 \tilde{b}_{1y} a_{3y} + 2 \tilde{b}_{2y}^2 a_{1y} a_{3y} + 4 \tilde{b}_{2y}^2 \left(a_{2y} a_{3y} + (N-2) a_{3y} a_{4y} \right) - \sum_{j=1}^{\kappa} q_{y,j} a_{3j} + \frac{h_y}{N} \right) \\
				=& -2\tilde{b}_{1y} a_{2y} + 2\tilde{b}_{2y}^2 a_{3y}^2 + 4\tilde{b}_{2y}^2 \left(a_{1y} a_{2y} +(N-2) a_{3y} a_{4y} \right) - \sum_{j=1}^{\kappa} q_{y,j} a_{2j} - \frac{h_y}{N} \\
				&+(N-2) \left(-2 \tilde{b}_{1y} a_{4y} + 2 \tilde{b}_{2y}^2 a_{3y}^2 + 4 \tilde{b}_{2y}^2 \left(a_{1y} a_{4y} + a_{2y} a_{3y} + (N-3) a_{3y} a_{4y} \right) - \sum_{j=1}^{\kappa} q_{y,j} a_{4j} \right).
			\end{aligned}
		\end{equation*}
		After combining terms and substituting $a_{2y}+(N-2)a_{4y}$ with $a_{1y} +(N-2) a_{3y}$, we get $a_{1y}^2 +(N-2)a_{1y}a_{3y} - (N-1)a_{3y}^2 = 0$, which yields $a_{3y} = a_{1y}$ or $a_{3y} = -\frac{1}{N-1} a_{1y}$. Note that $a_{3y}\ne a_{1y}$ due to their different differential equations. Hence, we can conclude that $a_{3y} = -\frac{1}{N-1} a_{1y}$. In conclusion, for $i = 1, 2, \dots, N$, $A_{iy}$ ($y \in \mathcal Y$) has the following expressions:
		\begin{equation*}
			(A_{iy})_{pq} =
			\begin{cases}
				a_{1y}(t), & \text{ if } p=q=i,\\
				a_{2y}(t), & \text{ if } p=q\ne i,\\
				-\frac{1}{N-1}a_{1y}(t), & \text{ if } p\ne q, p = i \text{ or } q = i,\\
				\frac{1}{(N-1)(N-2)}a_{1y}(t)- \frac{1}{N-2}a_{2y} (t), & \text{ otherwise}.
			\end{cases}
		\end{equation*}
		The existence and uniqueness of \eqref{eq:ABC} is equivalent to the existence and uniqueness of \eqref{eq:a_12}. For $a_{1y}$, the existence and uniqueness can be deduced from Lemma \ref{l:existN} and \ref{l:bdN}. Given $a_{1y}$'s, $a_{2y}$'s are linear equations, thus their existence and uniqueness are guaranteed by Theorem 12.1 in \cite{antsaklis2006}. Together with previous discussions, we conclude the results.
	\end{proof}

	\subsection{Convergence}\label{s:convergence}
	Based on the current progress, let us reiterate our goal (P1) for the convergence.
	Our objective is the convergence of the joint distribution $\mathcal L(\hat X_{1t}^{(N)}, Y_t^{(N)})$ 
	of $N$-player game generated 
	\eqref{eq:a_12}-\eqref{eq:XihatN} in the probability space $\Omega^{(N)}$
	  to the distribution 
	$\mathcal L (\hat X_t, Y_t) $ of MFG generated by \eqref{eq:ode1}-\eqref{eq:Xhat03} in the probability space $\Omega$.  More precisely, we want to find a number $\eta>0$ satisfying 
	\begin{equation}
	\label{eq:W2}
	\mathbb W_2 \left(\mathcal L(\hat X_{1t}^{(N)}, Y_t^{(N)}), \mathcal L (\hat X_t, Y_t) \right) = O \left(N^{-\eta} \right),
	\end{equation}
	where $\mathbb W_2$ is the 2-Wasserstein metric.
	This procedure is given in the following two steps:
	\begin{enumerate}
	\item
	 We will construct a process $Z^N$ in the probability space $\Omega$, who provides
	 exact copy of the joint distribution in the sense of 
	  $$\mathcal L(\hat X_{1t}^{(N)}, Y_t^{(N)}) = \mathcal L(Z^N, Y).$$
	 Note that, the \eqref{eq:XihatN} shows that $\hat X_{1t}^{(N)}$ correlates 
	 to $N$ many Brownian motions $\{W_i^{(N)}: i = 1, 2, \ldots, N\}$
	 from a much richer space $\Omega^{(N)}$ while $\Omega$ is a much 
	 smaller space   having only two Brownian motions $W$ and $B$.
	 Therefore, such an embedding essentially requires to represent $\hat X_{1t}^{(N)}$ by two independent Brownian motions 
	 and is in general not possible. 
	 However, due to the symmetric structure of MFG (or the nature of the mean field effect),
	 the embedding is possible and the details are provided in 
	 Lemma \ref{eq:coupling}.
	 \item By Proposition \ref{p:conv}, we can use distribution copy $(Z^N, Y)$ in $\Omega$ to write
	 \begin{equation}
	 \label{eq:w22}
	 \mathbb W_2^2 \left(\mathcal L(\hat X_{1t}^{(N)}, Y_t^{(N)}), \mathcal L (\hat X_t, Y_t) \right) 
	 \le \mathbb  E \left[ \left\vert Z_t^N - \hat X_t \right\vert^2 \right].
	 \end{equation}
	 To obtain the estimate of the above right hand side, we shall compare the \eqref{eq:ZN} 
	 of $Z^N$ and \eqref{eq:Xhat03} of $\hat X$, and 
	  it becomes essential to obtain the convergence rate of the ODE system \eqref{eq:a_12} towards the ODE system 
	 \eqref{eq:ode1}. The details are provided in Lemma \ref{l:compare}.
	\end{enumerate}

	\begin{lemma}
		\label{eq:coupling}
		Let $\{X_0^i: i\in \mathbb N\}$ be i.i.d. random variables in $\Omega$ independent to $(W, B, Y)$ 
		with $X_0^1 = X_0$.
		Let $Z^N$ be the solution of
		\begin{equation}\label{eq:ZN}
			Z_{t}^{N} = X_0 + \int_0^t \tilde{b}_1(Y_s, s) Z_s^{N} ds - 
			\int_0^t 2 \tilde{b}_2^2(Y_s, s) \hat a_{1Y_s}^{N} (s)
			\left(Z_{s}^N - \bar{X}_s^N
			\right) ds + W_{t},
		\end{equation}
		where 
		$$d \bar{X}_t^N = \tilde{b}_1(Y_t, t) \bar{X}_t^N d t + \frac{\sqrt{N-1}}{N} d B_t + \frac{1}{N} d W_t, 
		\quad \bar{X}_0^N = \frac 1 N \sum_{i=1}^N X_0^i,$$ 
		and 
		$$\hat a_{1y}^{N} =  \frac{N}{N-1} a_{1y}^{N},$$
		where $a_{1y}^{N}$ is from the ODE system\eqref{eq:a_12}.
		Then, $(Z^N_t, Y_t)$ in $(\Omega, \mathcal F_T, \mathbb P)$  has the same distribution 
		as $(\hat X_{1t}^{(N)}, Y_t^{(N)})$ in $(\Omega^{(N)}, \mathcal F_T^{(N)}, \mathbb P^{(N)})$.
	\end{lemma}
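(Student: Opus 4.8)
The plan is to show that, after a linear recombination of the driving Brownian motions, the generic player's equilibrium path $\hat X_1^{(N)}$ together with the population average $\bar X_t^{(N)} := \frac1N\sum_{i=1}^N \hat X_{it}^{(N)}$ form, in $\Omega^{(N)}$, a two-dimensional affine diffusion modulated by $Y^{(N)}$ that is driven by \emph{only} two Brownian motions, and then to recognize $(Z^N, \bar X^N)$ of the statement as the solution of the identical system in $\Omega$; since the laws of the driving data and of the initial data coincide, so do the laws of the solutions, and in particular the laws of $(\hat X_{1t}^{(N)}, Y_t^{(N)})$ and $(Z_t^N, Y_t)$.

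First I would work in $\Omega^{(N)}$ and set $W^{(N)}_{-1} := \frac{1}{\sqrt{N-1}}\sum_{j=2}^{N} W_j^{(N)}$. Because $W_2^{(N)},\dots,W_N^{(N)}$ are i.i.d.\ standard Brownian motions independent of $W_1^{(N)}$ and of $Y^{(N)}$, the triple $(W_1^{(N)}, W^{(N)}_{-1}, Y^{(N)})$ consists of two independent standard Brownian motions and an independent CTMC with generator $Q$; hence it has exactly the joint law of $(W, B, Y)$ in $\Omega$. Summing the equilibrium dynamics \eqref{eq:XihatN} over $i=1,\dots,N$, the key cancellation is that the mean field drift vanishes in the aggregate, $\sum_{i=1}^N\bigl(\hat X_{it}^{(N)} - \tfrac{1}{N-1}\sum_{j\ne i}\hat X_{jt}^{(N)}\bigr) = N\bar X_t^{(N)} - N\bar X_t^{(N)} = 0$, while $\tfrac1N\sum_{i=1}^N dW_{it}^{(N)} = \tfrac1N\,dW_{1t}^{(N)} + \tfrac{\sqrt{N-1}}{N}\,dW^{(N)}_{-1,t}$, so that $\bar X^{(N)}$ obeys the \emph{closed} linear equation
\[ d\bar X_t^{(N)} = \tilde{b}_1(Y_t^{(N)},t)\,\bar X_t^{(N)}\,dt + \tfrac1N\,dW_{1t}^{(N)} + \tfrac{\sqrt{N-1}}{N}\,dW^{(N)}_{-1,t}, \qquad \bar X_0^{(N)} = \tfrac1N\sum_{i=1}^N x_i^{(N)}, \]
which is exactly the equation defining $\bar X^N$ under $W\leftrightarrow W_1^{(N)}$, $B\leftrightarrow W^{(N)}_{-1}$, $Y\leftrightarrow Y^{(N)}$. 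For the generic player I would then use the algebraic identity $\hat X_{1t}^{(N)} - \tfrac1{N-1}\sum_{j\ne1}\hat X_{jt}^{(N)} = \tfrac{N}{N-1}\bigl(\hat X_{1t}^{(N)} - \bar X_t^{(N)}\bigr)$ to rewrite \eqref{eq:XihatN} with $i=1$ as
\[ d\hat X_{1t}^{(N)} = \Bigl(\tilde{b}_1(Y_t^{(N)},t)\hat X_{1t}^{(N)} - 2\tilde{b}_2^2(Y_t^{(N)},t)\,\hat a_{1Y_t^{(N)}}^{N}(t)\bigl(\hat X_{1t}^{(N)} - \bar X_t^{(N)}\bigr)\Bigr)\,dt + dW_{1t}^{(N)}, \]
with $\hat a_{1y}^{N} = \tfrac{N}{N-1}a_{1y}^{N}$, which is precisely \eqref{eq:ZN}.

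Having arrived here, the pair $(\hat X_1^{(N)}, \bar X^{(N)})$ solves in $\Omega^{(N)}$ the same coupled linear SDE system---with the same deterministic, $Y$-modulated coefficients, the solvability of $a_{1y}^{N}$ being provided by Lemma \ref{l:ABCexist}---that $(Z^N, \bar X^N)$ solves in $\Omega$. For a frozen path of the chain these coefficients are globally Lipschitz (indeed affine) in the state, so the solution is an explicit variation-of-constants functional of the initial data and of the noise path $(W_1^{(N)}, W^{(N)}_{-1}, Y^{(N)})$; under (A2) the initial data $\bigl(x_1^{(N)}, \tfrac1N\sum_i x_i^{(N)}\bigr)$ has the same law as $\bigl(X_0^1, \tfrac1N\sum_i X_0^i\bigr)$ and is independent of the noise, and the two noise triples share the same law, so the two solution processes share the same law; restricting to time $t$ gives $\mathcal L(\hat X_{1t}^{(N)}, Y_t^{(N)}) = \mathcal L(Z_t^N, Y_t)$. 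I expect the only genuinely non-routine step to be this reduction---establishing the aggregate cancellation and the identity expressing the interaction term through $\bar X^{(N)}$ alone---since it is precisely what collapses the $N$-dimensional noise to an effective pair of Brownian motions and makes the embedding into $\Omega$ possible; the concluding comparison is the standard weak-uniqueness (pathwise-functional) argument for linear SDEs modulated by an independent finite-state chain.
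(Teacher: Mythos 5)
Your proposal is correct and follows essentially the same route as the paper: the same aggregate cancellation yielding the closed equation for $\bar X^{(N)}$, the same identity $\hat X_{1t}^{(N)} - \tfrac1{N-1}\sum_{j\ne 1}\hat X_{jt}^{(N)} = \tfrac{N}{N-1}(\hat X_{1t}^{(N)} - \bar X_t^{(N)})$ producing $\hat a^N_{1y}$, the same recombination of the $N$ Brownian motions into the pair $(W_1^{(N)}, W_{-1}^{(N)})$, and the same conclusion via matching the laws of initial data and noise. The only cosmetic difference is that the paper writes the variation-of-constants functional explicitly as the maps $G_t$ and $\bar G_t$, whereas you invoke it abstractly as a pathwise functional; the content is identical.
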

	\begin{proof}
		Continued from the Lemma \ref{l:ABCexist}, player $i$'s path in the $N$-player game follows
		\begin{equation*}
			\label{eq:X111}
			\hat X_{it}^{(N)} = x_i^{(N)} + \int_0^t \tilde{b}_1(Y_s^{(N)}, s) \hat X_{is}^{(N)} ds - \int_0^t 2 \tilde{b}_2^2(Y_s^{(N)}, s) a_{1Y_s^{(N)}}^{N} (s)
			\left( 
			\hat X_{is}^{(N)} -\frac{1}{N-1}\sum_{j \ne i}^N  \hat X_{js}^{(N)}
			\right) ds + W_{it}^{(N)}.
		\end{equation*}
		With the notation
		$$\bar X^{(N)}_s = \frac{1}{N} \sum_{i=1}^N \hat  X_{is}^{(N)},
		$$
		one can rewrite the path by
		\begin{equation}
			\label{eq:X_Ni}
			\hat X_{it}^{(N)} = x_i^{(N)} + \int_0^t \tilde{b}_1(Y_s^{(N)}, s) \hat X_{is}^{(N)} ds - \int_0^t 2 \tilde{b}_2^2(Y_s^{(N)}, s) \hat{a}_{1Y_s^{(N)}}^{N} (s)
			\left( 
			\hat X_{is}^{(N)} - \bar{X}_s^{(N)}
			\right) ds + W_{it}^{(N)}.
		\end{equation}
		By adding up the above equations \eqref{eq:X_Ni} indexed by $i=1$ to $N$, one can have
		\begin{equation}
			\label{eq:X_bar}
			\begin{aligned}
			\bar X^{(N)}_t &= \bar x^{(N)} + \int_0^t \tilde{b}_1(Y_s^{(N)}, s) \bar{X}_s^{(N)} ds + \frac 1 N \sum_{i=1}^N W_{it}^{(N)}  \\
			&= \bar x^{(N)} + \int_0^t \tilde{b}_1(Y_s^{(N)}, s) \bar{X}_s^{(N)} ds 
			+ \frac  {\sqrt {N-1}} {N} \left(\sqrt{N-1} \bar W_{-it}^{(N)}\right) + \frac 1 N W_{it}^{(N)},
			\end{aligned}
		\end{equation}
		where $\bar W_{-it}^{(N)} :=  \frac 1 {N-1}\sum_{j\neq i} W_{jt}^{(N)}$.

		Next, 
		we define
		solution maps 
		of \eqref{eq:X_Ni} and \eqref{eq:X_bar}:
		\begin{equation}
		\label{eq:Gbar}
		\bar G_t(x, \phi, W_1, W_2) = \mathcal E_t(\phi) 
		\left(x + 
		\int_0^t \mathcal E_s(- \phi) d (W_{1s} + W_{2s})
		\right)
		\end{equation}
		and
			\begin{equation}
	\label{eq:function G}
	G_t(x, \phi_1, \phi_2, \phi_3, W) = x\mathcal E_t(\phi_1 - \phi_2) + \mathcal E_t(\phi_1 - \phi_2) \int_0^t \mathcal E_s (-\phi_1 + \phi_2) \left(\phi_2(s) \phi_3(s) ds + dW_s \right),
	\end{equation}
where
$$\mathcal E_t(\phi)
		= \exp \left\{ \int_0^t \phi_s ds \right\}.
		$$
		Now, we can rewrite $\bar X_t^{(N)}$ of \eqref{eq:X_bar} and $\hat X_{1t}^{(N)}$ of  \eqref{eq:X_Ni} as
		$$\bar X_t^{(N)} = \bar G_t 
		\left (\frac 1 N \sum_{i=1}^N x_i^{(N)}, \tilde b_1(Y_.^{(N)}, \cdot),  
		\frac  {\sqrt {N-1}} {N} \left(\sqrt{N-1} \bar W_{-1}^{(N)}\right), \frac 1 N W_{1}^{(N)}\right ), $$
		and 
		$$\hat X_{1t}^{(N)} 
		= 
		G_t \left(x_1^{(N)}, \tilde{b}_1(Y^{(N)}_{\cdot}, \cdot), 2 \tilde{b}_2(Y^{(N)}_{\cdot}, \cdot) \hat a^N_1(Y^{(N)}_{\cdot}, \cdot), 
		\bar {X}^{(N)} (\cdot), W_1^{(N)} \right)$$
		Meanwhile, $(Z^N, \bar X^N)$ of \eqref{eq:ZN} can also be written in the form of
		$$\bar X_t^{N} = \bar G_t 
		\left (\frac 1 N \sum_{i=1}^N X_0^i, \tilde b_1(Y_., \cdot), 
		\frac  {\sqrt {N-1}} {N} B, \frac 1 N W \right ), $$
		and 
		\begin{equation}
		\label{eq:ZN1}
		Z_t^N = G_t \left(X_0, \tilde{b}_1(Y_{\cdot}, \cdot), 2 \tilde{b}_2(Y_{\cdot}, \cdot) 
		\hat a^N_1(Y_{\cdot}, \cdot), \bar{X}^{N}(\cdot), W \right)
		\end{equation}
		Finally, the fact that the distribution of $(Z^N, Y)$  in the space $\Omega$ is
		identical distribution to $(\hat X_{1}^{(N)}, Y^{(N)})$ in $\Omega^{(N)}$ comes from the followings:
		\begin{itemize}
		\item $\tilde b_1, \tilde b_2, \hat a_1^N$ are deterministic functions.
		\item The random processes $(\sqrt{N-1} \bar W_{-1}^{(N)}, W_1^{(N)}, Y^{(N)})$ 
		are independent mutually in $\Omega^{(N)}$, while the random elements $(B, W, Y)$ are also independent triples. 
		Moreover, two random triples have identical joint distributions. 
		\item Initial states are generated from identical joint distributions $\{x_i^{(N)}: i = 1, 2, \dots, N\}$ and 
		$\{X_0^i: i = 1, 2, \dots, N\}$.
%		\item
		\end{itemize}
		Therefore, $(Z^N, Y)$  and
		 $(\hat X_{1}^{(N)}, Y^{(N)})$ have the same distributions. This completes the proof.
			\end{proof}
	
In view of \eqref{eq:w22}, we shall estimate the second moment $\mathbb E \left[ \left\vert Z_t^N - \hat X_t \right\vert^2 \right]$.
First, we can rewrite 
 $\hat X$ of \eqref{eq:Xhat03} using above representations via $G_t$:
		\begin{equation*}
			\label{eq:hatXe}
			\hat X_t = G_t \left(X_0, \tilde{b}_1(Y_{\cdot}, \cdot), 2 \tilde{b}_2(Y_{\cdot}, \cdot) a(Y_{\cdot}, \cdot), \hat{\mu}(\cdot), W \right),
		\end{equation*}
which leads to a better comparison with $Z^N$ in the form of \eqref{eq:ZN1}.
To proceed, the following properties of $G_t$ are useful for the estimate of the second moment, 
whose proof is relegated to the Appendix \ref{s:appendix lipschitz}.
Throughout the proof of the next lemma, we will use $K$ in various places as a generic constant which varies from line to line.
	
	\begin{lemma}\label{l:compare}
	    The convergence rate  under the Wasserstein metric $\mathbb W_2(\cdot, \cdot)$ is 
	    $$\mathbb W_2 \left(\mathcal L(\hat X_{1t}^{(N)}, Y_t^{(N)}), \mathcal L (\hat X_t, Y_t) \right) = 
	    O \left(N^{- \frac 1 2} \right).$$
	\end{lemma}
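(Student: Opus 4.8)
The plan is to string the embedding of Lemma \ref{eq:coupling} together with three Gronwall-type estimates. By Lemma \ref{eq:coupling} the pair $(Z^N_t, Y_t)$ constructed on $\Omega$ has the same law as $(\hat X_{1t}^{(N)}, Y_t^{(N)})$; since the MFG equilibrium $\hat X$ also lives on $\Omega$ and is driven by the same $Y$ and $W$, the pair $\big((Z^N_t, Y_t),(\hat X_t, Y_t)\big)$ is an admissible coupling whose $Y$-marginals coincide. Hence, as recorded in \eqref{eq:w22},
$$\mathbb W_2^2\left(\mathcal L(\hat X_{1t}^{(N)}, Y_t^{(N)}), \mathcal L(\hat X_t, Y_t)\right) \le \mathbb E\left[\left|Z_t^N - \hat X_t\right|^2\right],$$
so it suffices to show $\mathbb E\big[\sup_{t\le T}|Z_t^N - \hat X_t|^2\big] = O(N^{-1})$.

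The first ingredient is the convergence rate of the reduced Riccati coefficients. A direct computation shows that $\hat a_{1y}^{N} = \tfrac{N}{N-1} a_{1y}^{N}$ solves the MFG Riccati equation \eqref{eq:ode1} for $a_y$ with the term $2\tilde b_{2y}^2 a_y^2$ replaced by $\tfrac{2(N+1)}{N}\tilde b_{2y}^2(\hat a_{1y}^{N})^2$ and with the same terminal value $g_y$. Therefore $\delta_y^{N} := \hat a_{1y}^{N} - a_y$ solves a linear ODE system with zero terminal condition, coefficients bounded uniformly in $N$ (using the a priori bounds of Lemma \ref{l:eu_abc} together with Lemmas \ref{l:existN}--\ref{l:bdN} applied to \eqref{eq:a_12}), and inhomogeneous forcing $\tfrac2N\tilde b_{2y}^2(\hat a_{1y}^{N})^2 = O(N^{-1})$; a backward Gronwall estimate then gives $\sup_{t\le T}\max_{y\in\mathcal Y}|\delta_y^{N}(t)| = O(N^{-1})$. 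The second ingredient is the population-center error $\bar X^N - \hat\mu$. Subtracting the dynamics of $\hat\mu$ from Theorem \ref{t:main} from that of $\bar X^N$ in Lemma \ref{eq:coupling} gives a linear SDE with drift $\tilde b_1(Y_t,t)\,\cdot\,$, which one solves explicitly; using boundedness of $\tilde b_1$ and Itô's isometry,
$$\mathbb E\left[\sup_{t\le T}\left|\bar X_t^N - \hat\mu_t\right|^2\right] \le K\left(\mathbb E\left|\tfrac1N\textstyle\sum_{i=1}^{N} X_0^i - \mathbb E[X_0]\right|^2 + \tfrac1N\right) = O(N^{-1}),$$
where the first term equals $\mathrm{Var}(X_0)/N = O(N^{-1})$ by independence of the $X_0^i$ and $\mathbb E[X_0^2]<\infty$, and the $\tfrac1N$ comes from the noise coefficient $\tfrac{N-1}{N^2}+\tfrac1{N^2}=\tfrac1N$.

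Finally I would compare $Z^N$ and $\hat X$ directly. Both solve \eqref{eq:ZN} and \eqref{eq:Xhat03} driven by the same $W$ from the same initial value $X_0$, so $\Delta_t := Z_t^N - \hat X_t$ satisfies a pathwise linear ODE with $\Delta_0 = 0$ and no martingale part. Writing
$$\hat a_{1Y_t}^{N}\left(Z_t^N - \bar X_t^N\right) - a_{Y_t}\left(\hat X_t - \hat\mu_t\right) = \hat a_{1Y_t}^{N}\Delta_t - \hat a_{1Y_t}^{N}\left(\bar X_t^N - \hat\mu_t\right) + \delta_{Y_t}^{N}\left(\hat X_t - \hat\mu_t\right),$$
the drift of $\Delta_t$ is a bounded coefficient times $\Delta_t$ plus a forcing term controlled by $|\bar X_t^N - \hat\mu_t|$ and $|\delta_{Y_t}^{N}|\,|\hat X_t - \hat\mu_t|$. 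Since $\mathbb E\int_0^T|\hat X_s - \hat\mu_s|^2\,ds$ is finite and independent of $N$ (a standard linear-SDE moment bound using $\mathbb E[X_0^2]<\infty$ and boundedness of $\tilde b_1,\tilde b_2, a_{Y}$), Gronwall's inequality yields
$$\mathbb E\left[\sup_{t\le T}\left|\Delta_t\right|^2\right] \le K\left(\|\delta^{N}\|_\infty^2\,\mathbb E\!\int_0^T\!\left|\hat X_s - \hat\mu_s\right|^2 ds + \mathbb E\!\int_0^T\!\left|\bar X_s^N - \hat\mu_s\right|^2 ds\right) = O(N^{-2}) + O(N^{-1}) = O(N^{-1}),$$
and taking square roots, combined with the coupling bound above, gives $\mathbb W_2(\cdot,\cdot) = O(N^{-1/2})$. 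Most of the genuine difficulty has already been absorbed into Lemmas \ref{l:ABCexist} and \ref{eq:coupling} (the $N$-invariant reduction of \eqref{eq:ABC} and the embedding into $\Omega$); within the present argument the one point requiring real care is the uniform-in-$N$ boundedness of the Riccati solutions, which simultaneously supplies the $O(N^{-1})$ rate for $\delta^{N}$ and keeps all Gronwall constants $N$-independent.
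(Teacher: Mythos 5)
Your proposal is correct and follows the paper's strategy in all essentials: the same reduction via the coupling of Lemma \ref{eq:coupling} and the bound \eqref{eq:w22}, the same two error sources (the Riccati coefficient gap $\hat a_{1y}^{N}-a_y$ estimated by a backward Gr\"onwall argument on the ODE difference with $O(N^{-1})$ forcing $\tfrac2N\tilde b_{2y}^2(\hat a_{1y}^N)^2$, and the population-center gap $\bar X^N-\hat\mu$ estimated via the explicit linear SDE, It\^o isometry and the variance of the empirical mean), and the same final rate. The one genuine divergence is the last assembly step: the paper writes $Z^N$ and $\hat X$ as images of an explicit solution map $G_t$ and invokes the Lipschitz continuity of $(\phi_2,\phi_3)\mapsto G_t(x,\phi_1,\phi_2,\phi_3,W)$ proved in Appendix \ref{s:appendix lipschitz}, whereas you subtract the two SDEs \eqref{eq:ZN} and \eqref{eq:Xhat03} directly, observe that $\Delta_t=Z_t^N-\hat X_t$ has no martingale part, and run Gr\"onwall pathwise with the forcing split exactly as in your displayed decomposition. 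Your route is more elementary and self-contained (it bypasses the Appendix lemma entirely, at the cost of needing the extra but routine observation that $\mathbb E\int_0^T|\hat X_s-\hat\mu_s|^2\,ds$ is bounded uniformly in $N$), and it handles the randomness of the ``coefficient'' $\bar X^N$ somewhat more transparently than the paper's application of a Lipschitz constant depending on $|\phi_3|_\infty$. You are also right to flag the uniform-in-$N$ boundedness of $\hat a_{1y}^N$ as the point requiring care; the paper uses it implicitly through its constant $K_1$, and Lemma \ref{l:bdN} applied to \eqref{eq:a_12} does supply it since the bound $e^{KT}(\sum_y g_y+T\sum_y h_y)$ there is independent of $N$.
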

	\begin{proof}
	    In view of \eqref{eq:w22}, we start with
	   \begin{equation*}
	   \begin{aligned}
	       & \mathbb W_2^2 \left(\mathcal L(\hat X_{1t}^{(N)}, Y_t^{(N)}), \mathcal L (\hat X_t, Y_t) \right) 
	 \le \mathbb E \left[ \left\vert Z_t^N - \hat X_t \right\vert^2 \right]
 \\
	       =& \mathbb E \left[\left\vert 
	       G_t \left(X_0, \tilde{b}_1(Y_{\cdot}, \cdot), 2 \tilde{b}_2(Y_{\cdot}, \cdot) 
		\hat a^N_1(Y_{\cdot}, \cdot), \bar{X}^{N}(\cdot), W \right)
	       - 
	       G_t \left(X_0, \tilde{b}_1(Y_{\cdot}, \cdot), 2 \tilde{b}_2(Y_{\cdot}, \cdot) a(Y_{\cdot}, \cdot), \hat{\mu}(\cdot), W \right)\right\vert^2 \right]
	       \\
	       := & \mathbb E \left[ \left | I_1(t) - I_2(t)  \right| \right].
	   \end{aligned}
	   \end{equation*}
	   Applying the Lipschitz continuity of  $(\phi_2, \phi_3) \mapsto G_t(x, \phi_1, \phi_2, \phi_3, W)$ by 
	   Appendix \ref{s:appendix lipschitz} on the conditional expectation $\mathbb E \left[ \left | I_1(t) - I_2(t)  \right| \Big\vert  Y \right] $, we have
	   \begin{equation*}
	   \begin{aligned}
	       \mathbb E |Z_t^N - \hat X_t|^2
	      &\leq K 
	      \mathbb E \left[ \sup_{0 \leq t \leq T} \left(2 \tilde{b}_2(Y_t, t) \hat{a}_{1Y_t}^{N}(t) - 2 \tilde{b}_2(Y_t, t) a_{Y_t} (t) \right)^2 
	      + \sup_{0 \leq t \leq T} \left(\bar{X}^N(t) - \hat{\mu}(t) \right)^2  \right] \\
	       & \leq K 
	       \mathbb E \left[
	        \sup_{0 \leq t \leq T}  \left\vert \tilde{b}_2(Y_t, t) \right\vert^2  
	        \sup_{0 \leq t \leq T} \left\vert \hat{a}_{1Y_t}^{N}(t) - a_{Y_t} (t)\right\vert^2 
	        +  \sup_{0 \leq t \leq T} \left\vert \bar{X}^N(t) - \hat{\mu}(t)  \right\vert^2 \right] \\
	        & \leq K 
	       \mathbb E \left[
	        \sup_{0 \leq t \leq T} \left\vert \hat{a}_{1Y_t}^{N}(t) - a_{Y_t} (t)\right\vert^2 
	        +  \sup_{0 \leq t \leq T} \left\vert \bar{X}^N(t) - \hat{\mu}(t)  \right\vert^2 \right] 
	        	   \end{aligned}
	   \end{equation*}
	   From the dynamic of $\bar{X}^{N}$ and $\hat{\mu}$, 
	   \begin{equation*}
	   \begin{cases}
	       \displaystyle d \left(\bar{X}_t^{N} - \hat{\mu}_t \right) = \tilde{b}_1(Y_t, t) \left(\bar{X}_t^{N} - \hat{\mu}_t \right) dt + \frac{\sqrt{N-1}}{N} dB_t + \frac{1}{N} d W_t, \\
	       \displaystyle \bar{X}_0^{N} - \hat{\mu}_0 
	       = \frac 1 N \sum_{i=1}^N X_0^i - \hat{\mu}_0,
  	   \end{cases}
	   \end{equation*}
	   which can be written in terms of $\bar G_t$ of \eqref{eq:Gbar}:
	   $$
	   \bar{X}^{N}(t) - \hat{\mu}(t) = \bar G_t\left (\frac 1 N \sum_{i=1}^N X_0^i - \hat \mu_0, \tilde b_1(Y_., \cdot), 
		\frac  {\sqrt {N-1}} {N} B, \frac 1 N W \right ).
	   $$
	   Using the fact of $\left\vert \tilde b_{1y} \right\vert_\infty <\infty$ and Ito's isometry, this yields the following estimation:
	   $$
	   \mathbb E \left[  \sup_{0 \leq t \leq T} 
	   \left\vert \bar{X}^N(t) - \hat{\mu}(t)  \right\vert^2 \right] 
	   \le K \left(\frac 1 N + \mathbb E  \left\vert \frac 1 N \sum_{i=1}^N X_0^i - \hat \mu_0 \right\vert^2 \right).
	   $$
	   Note that, by central limit theorem, we have
	   $$ N \mathbb E \left[ \left\vert \frac 1 N \sum_{i=1}^N X_0^i - \hat \mu_0 \right\vert^2 \right]
	   = \mathbb E \left[ \left\vert   \frac {\sum_{i=1}^N (X_0^i - \hat \mu_0) } {\sqrt N} \right\vert^2 \right] \to Var(X_0^1) <\infty,\quad  N\to \infty,
	   $$
	   and we conclude that
	   \begin{equation}
	   \label{eq:rate1}
	    \mathbb E \left[  \sup_{0 \leq t \leq T} 
	   \left\vert \bar{X}^N(t) - \hat{\mu}(t)  \right\vert^2 \right] 
	   = O(N^{-1}).
	   \end{equation} 
	   %In the above, we use conventional small $o$ notation for the deterministic function of variable $N$.
	   
	   Next we investigate the boundness of
	   $$\sup_{0 \leq t \leq T} \left\vert  \hat{a}_{1Y_t}^{N}(t) - a_{Y_t} (t) \right\vert^2.$$
	   From \eqref{eq:a_12} and $\hat a_{1y}^{N} =  \frac{N}{N-1} a_{1y}^{N}$, we have
	   \begin{equation*}
	   \begin{cases}
	       \displaystyle \left(\hat{a}^{N}_{1y} \right)' + 2 \tilde{b}_{1y} \hat{a}_{1y}^{N} - \frac{2(N+1)}{N} \tilde{b}_{2y}^2 \left( \hat{a}_{1y}^N \right)^2 + \sum_{i=1}^{\kappa} q_{y,i} \hat{a}_{1i}^N + h_y = 0 \\
	       \displaystyle \hat{a}^{N}_{1y}(T) = g_y.
	   \end{cases}
	   \end{equation*}
	   Define $u_y = a_y - \hat{a}^{N}_{1y}$, let $\tau = T-t$ and denote $u_y(\tau):= u_y(T-t)$, we have
	   \begin{equation}
	   \label{eq:u diff}
	   \begin{cases}
	       \displaystyle u_y'(\tau) = 2 \tilde{b}_{1y}(\tau) u_{y}(\tau) - 2 \tilde{b}_{2y}^2(\tau) \left(a_y(\tau) + \hat{a}^N_{1y}(\tau) \right) u_y(\tau) + \frac{2}{N} \tilde{b}_{2y}^2(\tau) \left(\hat{a}_{1y}^{N}(\tau) \right)^2 + \sum_{i=1}^{\kappa} q_{y,i} u_i(\tau) \\
	       u_y(0) = 0,
	   \end{cases}
	   \end{equation}
	   which gives that
	   \begin{equation*}
	       u_y(\tau) = \int_0^{\tau} \left( 2 \tilde{b}_{1y}(s) u_{y}(s) - 2 \tilde{b}_{2y}^2(s) \left(a_y(s) + \hat{a}^N_{1y}(s) \right) u_y(s) + \frac{2}{N} \tilde{b}_{2y}^2(s) \left(\hat{a}_{1y}^{N}(s) \right)^2 + \sum_{i=1}^{\kappa} q_{y,i} u_i(s) \right) ds.
	   \end{equation*}
	   Thus for $\tau \in [0, T]$,
	   \begin{equation*}
	   \begin{aligned}
	       |u_y(\tau)| & \le \int_0^{\tau} \left( 2 \left\vert \tilde{b}_{1y} \right\vert_{\infty} |u_{y}(s)| + 2 \left\vert\tilde{b}_{2y} \right\vert_{\infty}^2 \left(|a_y|_{\infty} + \left\vert\hat{a}^N_{1y}\right \vert_{\infty} \right) |u_y(s)|  \right. \\ 
	       & \hspace{0.5 in} \left. + \frac{2}{N} \left\vert \tilde{b}_{2y} \right\vert_{\infty}^2 \left\vert\hat{a}_{1y}^{N} \right\vert_{\infty}^2 + \sum_{i=1}^{\kappa} |q_{y,i}| |u_i(s)| \right) ds.
	   \end{aligned}
	   \end{equation*}
	   Let $\left(\left\vert \tilde{b}_{1y} \right\vert_{\infty}, \left\vert \tilde{b}_{2y} \right\vert_{\infty}, |a_y|_{\infty}, \left\vert\hat{a}_{1y}^{N} \right\vert_{\infty}, \sup_{i \in \mathcal Y} |q_{y,i}| \right) \leq K_1$, then
	   $$|u_y(\tau)| \leq \frac{2}{N}  K_1^4 T + \int_0^{\tau} \left( \left(2K_1 + 4K_1^3 \right) |u_y(s)| + K_1 \sum_{i=1}^{\kappa} |u_i(s)|\right) ds.$$
	   By adding up the above equation indexed by $y=1$ to $\kappa$, one can have
	   $$\sum_{y=1}^{\kappa} |u_y(\tau)| \leq \frac{2 \kappa K_1^4 T}{N} +  \left(2K_1 + 4 K_1^3 + \kappa K_1 \right) \int_0^{\tau} \sum_{y=1}^{\kappa} |u_y(s)| d s.$$
	   Let $K_2 = 2 \kappa K_1^4 T$ and $K_3 = 2K_1 + 4 K_1^3 + \kappa K_1$, by the Gr\"onwall's inequality,
	   $$\sum_{y=1}^{\kappa} |u_y(\tau)| \leq \frac{K_2}{N} e^{K_3 \tau} \leq \frac{K_2}{N} e^{K_3 T}, \quad \forall \tau \in [0, T],$$
	   which implies that
	   $$\sum_{y=1}^{\kappa} |u_y(\tau)| \leq \frac{K}{N}, \quad \forall \tau \in [0, T].$$
	   Thus, we have
	   \begin{equation}
	   \label{eq:rate2}
	   \sup_{0 \leq t \leq T} \left\vert \hat{a}_{1Y_t}^{N}(t) - a_{Y_t} (t)  \right\vert^2 \leq \frac{K}{N^2}, \hbox{ almsot surely }.	   \end{equation}
	   Therefore, the convergence is obtained from \eqref{eq:rate1} and \eqref{eq:rate2}:
	   \begin{equation*}
	   \begin{aligned}
	      \mathbb W_2^2 \left(\mathcal L(Z_t^N), \mathcal L(\hat{X}_t) \right)
	      \leq & K 
	       \mathbb E \left[
	        \sup_{0 \leq t \leq T} \left\vert \hat{a}_{1Y_t}^{N}(t) - a_{Y_t} (t)\right\vert^2 
	        +  \sup_{0 \leq t \leq T} \left\vert \bar{X}^N(t) - \hat{\mu}(t)  \right\vert^2 \right]  = O(N^{-1}).
	   \end{aligned}
	   \end{equation*}

	\end{proof}

	%%%%%%%%%%%%%%%%%%%%%%%%%%%%%%%%%%%%%%%%%%%%
	\section{Numerical results}
	\label{s:section6}
	
	\subsection{Simulations of Riccati system, the value function and optimal control of the generic palyer}
	\label{s:simu}
	We have derived a $4 \kappa$ dimensional Riccati ODE system 
	\eqref{eq:ode1} to determine the parameter functions 
\begin{center}
    $(a_y, b_y, c_y, k_y: y \in \mathcal Y)$
\end{center} 
	needed for the characterization of the equilibrium and the value function. Meanwhile, we also show the solvability of the Riccati ODE system in Section \ref{s:section3}. 
	
	As mentioned earlier, different from the MFG characterization with the common noise, the derived Riccati system is essentially finite-dimensional. In this subsection, we present a numerical experiment and show some numerical results for solving the Riccati system
	to demonstrate its computational advantages.
	
	For the illustration purpose, assume the finite time horizon is given with $T =5$ and that the coefficients of the dynamic equation are listed below:
	\begin{align*}
	    & \mathcal Y = \{0, 1\}, \\
	    & Q = \left[\begin{array}{cc}
			- 0.5 & 0.5 \\
			0.6 & - 0.6
		\end{array}\right], \\
		& \tilde{b}_1(\cdot, \cdot) = 0, \ \tilde{b}_2(\cdot, \cdot) = 1,  \\
		& h_0 = 2, \ h_1 = 5, \ g_0 = 3, \ g_1 = 1,\\
		& \mu_0 = 0, \nu_0 = 2.
	\end{align*}
	Firstly, using the forward Euler’s method with the step size $\delta = 10^{-2}$, we can obtain trajectories of $(a_y, b_y, c_y: y \in \mathcal Y)$, which is the solution of ODE system \eqref{eq:ode1}. Next, using the trajectories of the parameter functions and Markov chain $Y_t$, we can achieve the simulations for $\hat{\alpha_t}$ and $\hat{X_t}$. The Matlab code can be found at \url{https://github.com/JiaminJIAN/Regime_switching_MFG}.
	
	\begin{figure}[H]
		\begin{minipage}{\linewidth}
			\centering
			\subcaptionbox{$a_{y}(t)$}
			{\includegraphics[scale=0.36] {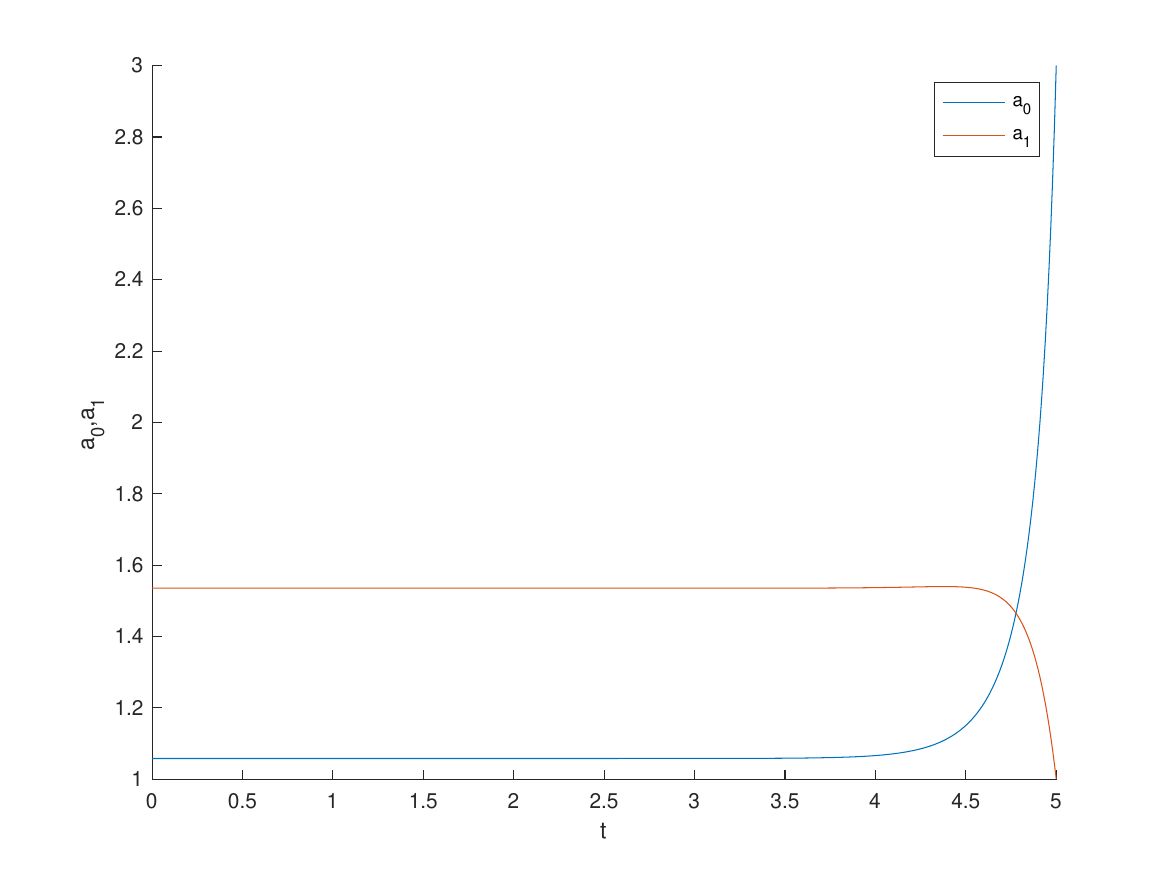}}\quad
			\subcaptionbox{Value function $V$, optimal control $\alpha$, and conditional second moment $\nu$}
			{\includegraphics[scale=0.36] {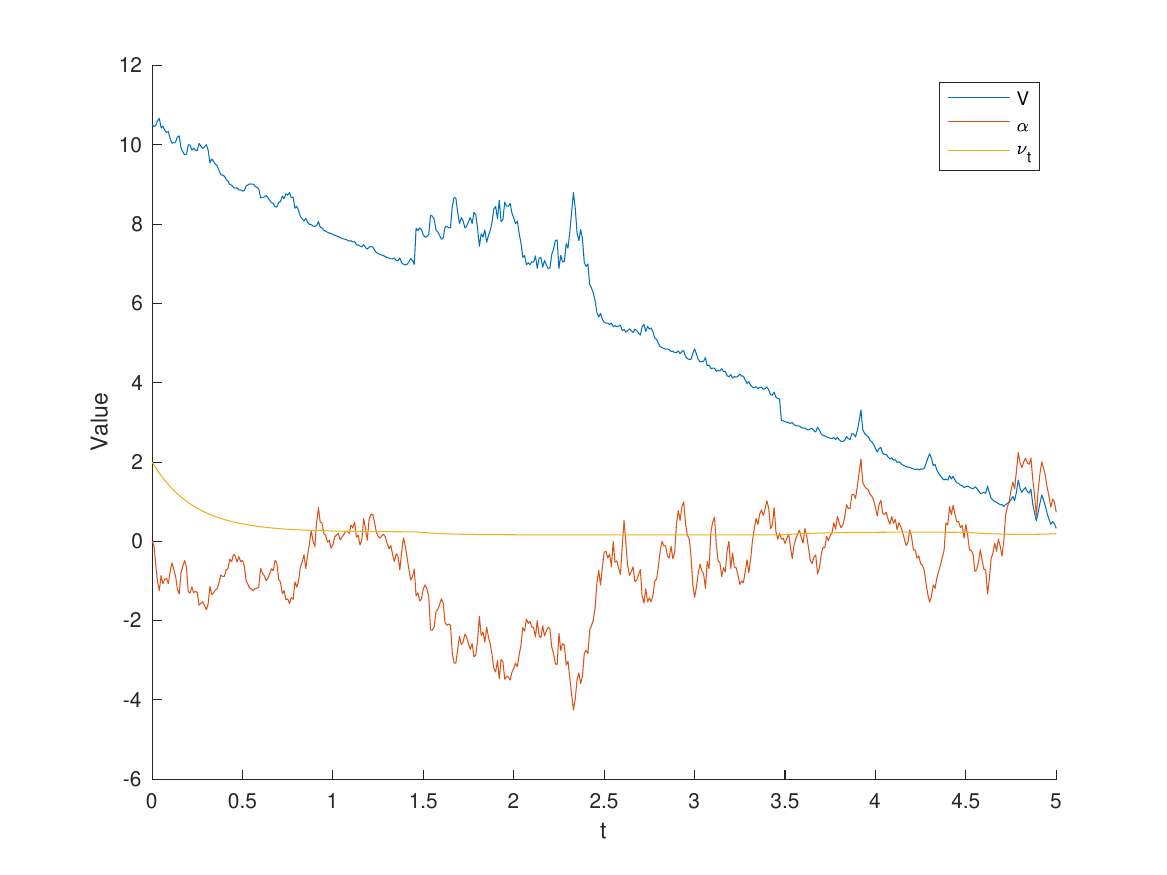}}
			\caption{Simulations for $a_{y}, V, \alpha$ and $\nu$.}
			\label{fig: figure 1}
		\end{minipage}
	\end{figure}
	
	\begin{figure}[H]
		\begin{minipage}{\linewidth}
			\centering
			\subcaptionbox{$b_{y}(t)$}
			{\includegraphics[scale=0.36] {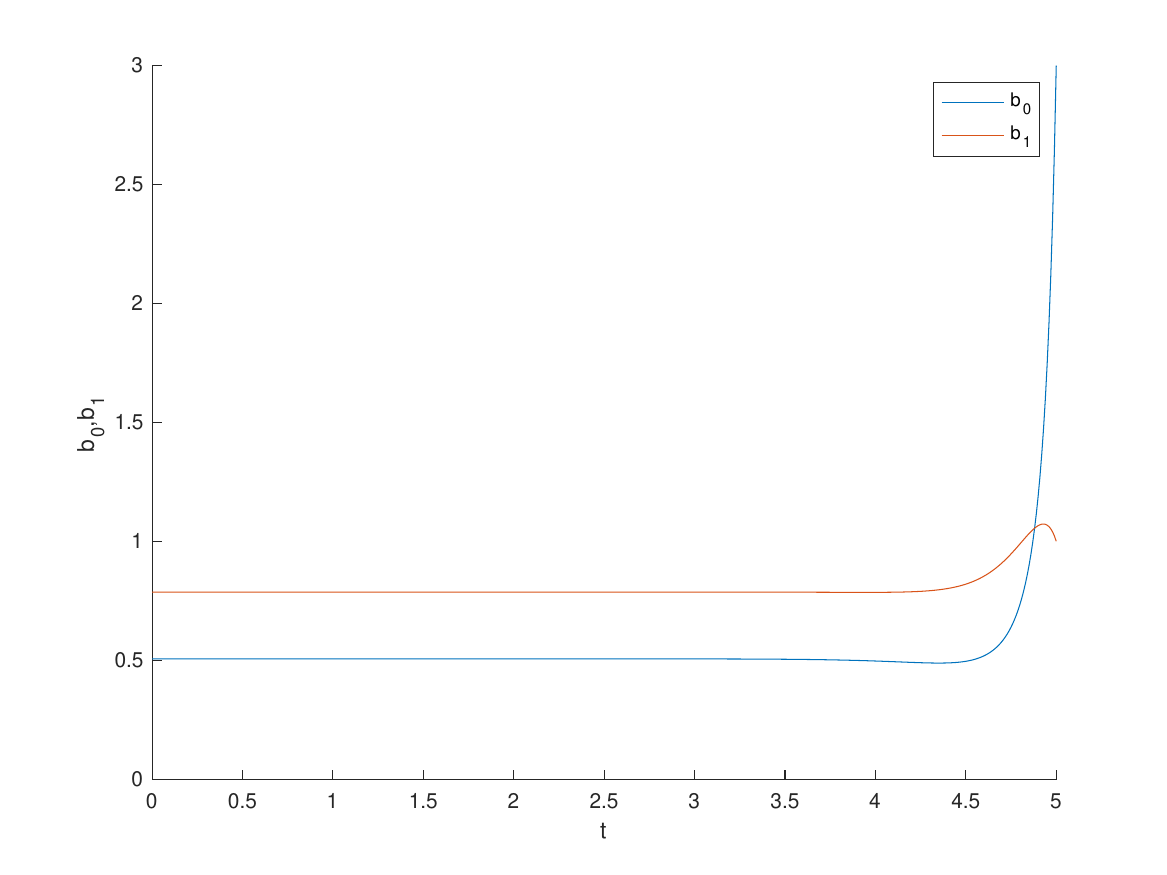}}\quad
			\subcaptionbox{$c_{y}(t)$}
			{\includegraphics[scale=0.36] {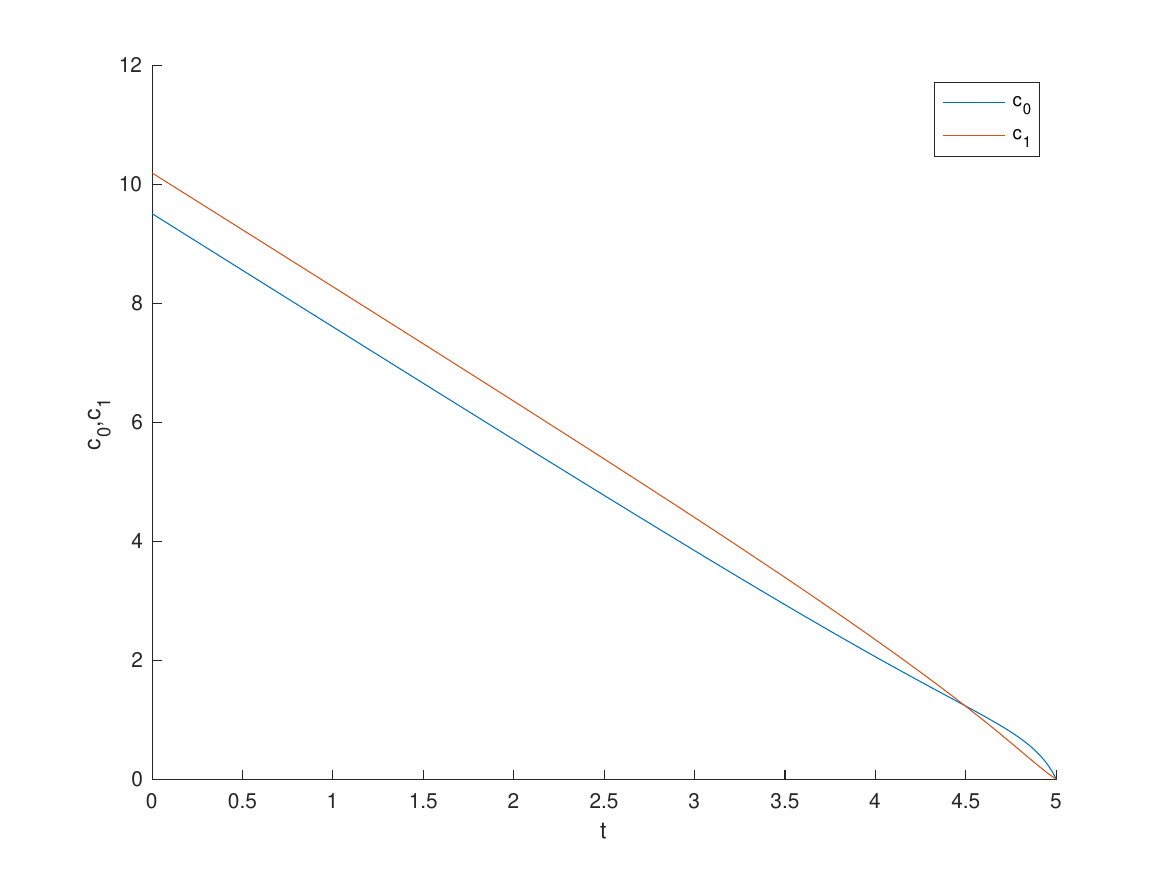}}
			\caption{Simulations for $b_{y}$ and $c_{y}$.}
			\label{fig: figure 2}
		\end{minipage}
	\end{figure}

	As shown in figure \ref{fig: figure 1},  people tend to centralize since the conditional second moment of the population density $\nu_t$ is always decreasing.
	
	\subsection{Convergence of the $N$-player game}
	
	In section \ref{s:section4}, we showed that the generic player's path for the $N$-player game is convergent to the generic player's path for MFGs. In this subsection, we demonstrate the convergence of the conditional first moment, conditional second moment, and the value functions of the $N$-player game to the corresponding terms of the generic player in the Mean Field Game setup by using some numerical examples.
	
	The following figures show the value functions, $\mu^{(N)}$ and $\nu^{(N)}$ under $N \in \{10, 20, 50, 100\}$ with the same parameters' settings as in figure \ref{fig: figure 1} and figure \ref{fig: figure 2} in section \ref{s:simu}. We can clearly see the convergence to the solution of the generic player.
	
	\begin{figure}[H]
		\begin{minipage}{\linewidth}
			\centering
			\subcaptionbox{$\mu_t$: conditional mean of the population density}
			{\includegraphics[scale=0.35] {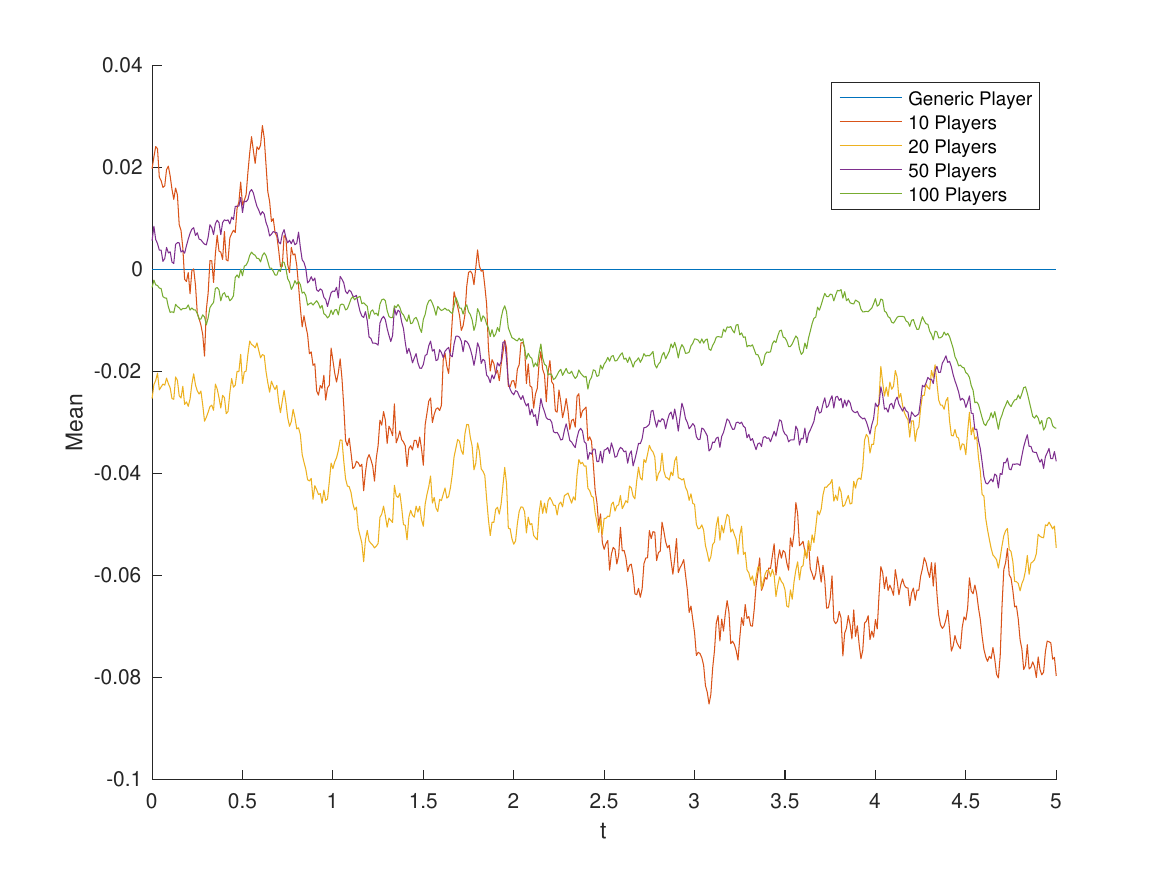}}\quad
			\subcaptionbox{$\nu_t$: conditional 2nd moment of the population density}
			{\includegraphics[scale=0.35] {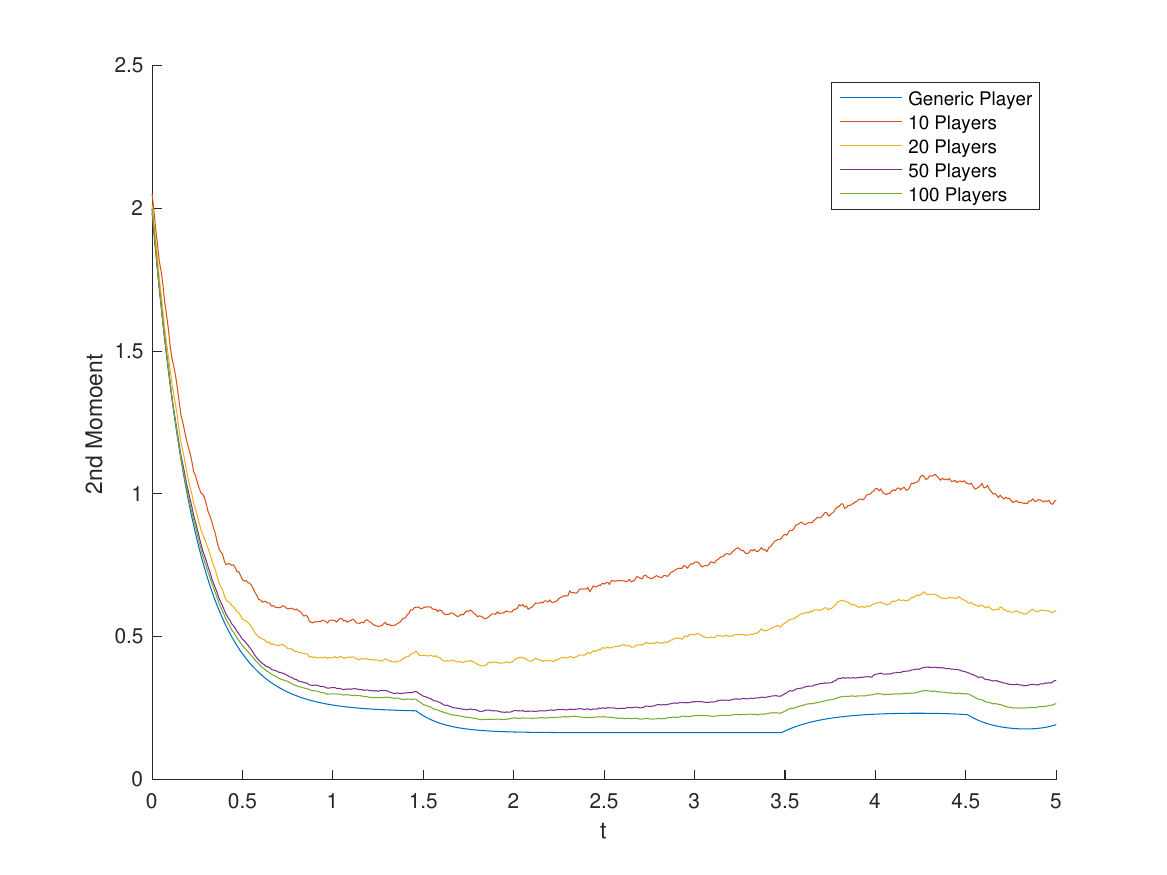}}
			\caption{Simulations for $\mu_t$ and $\nu_t$.}
			\label{fig: figure 3}
		\end{minipage}
	\end{figure}
	\begin{figure}[H]
		\centering
		\includegraphics[scale=0.35] {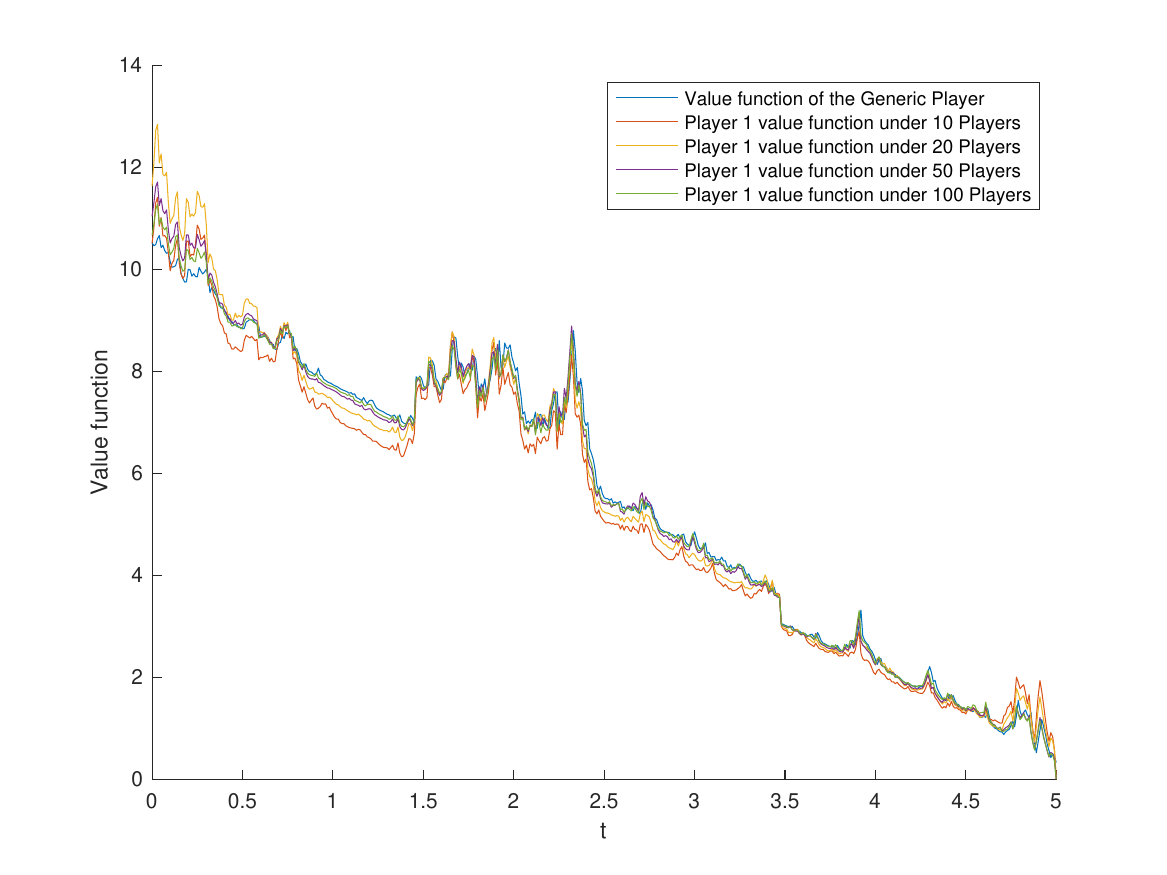}\quad
		\caption{Simulation of player 1's optimal value function $V$.}
		\label{fig: figure 4}
	\end{figure}
	
\section{
%\textcolor{blue}{
Conclusion
%}
}
\label{s:conclusion}

%\textcolor{blue}{
This paper investigates the convergence rate of the $N$-player game, governed by a Markov chain common noise, towards its asymptotic MFG under the Linear-Quadratic-Gaussian structure. To achieve this, firstly, we introduce a Markovian structure using two auxiliary processes for the first and second moments of the MFG equilibrium and employ the fixed point condition in MFG. By doing so, we characterize the equilibrium measure in MFG with a finite-dimensional Riccati system of ODEs. Consequently, we obtain the equilibrium path, equilibrium control, and the value function in MFG.
%}

%\textcolor{blue}{
Subsequently, we address the $N$-player game under the LQG structure, and we characterize its equilibrium path, equilibrium control, and the value function through a Riccati system of ODEs with a dimension of $O(N^3)$. Leveraging the $N$-invariant algebraic structure of this system of ODEs, we establish a dimension reduction result, facilitating a comparison between the equilibrium path $\hat X_1^{(N)}$ in the $N$-player game and the equilibrium path $\hat X$ in the MFG.
%}

%\textcolor{blue}{
To demonstrate the convergence between the two equilibrium paths, we embed $\hat X_1^{(N)}$ from $\Omega^{(N)}$ to $\Omega$ using a distribution copy $Z^N \in \Omega$, leading to the achievement of the convergence result and the computation of the convergence rate. Lastly, some numerical examples are presented to demonstrate the convergence result.
%}

%\textcolor{blue}{
In the future, firstly, we can consider the MFG in more general settings, such as with time delays and Poisson jumps. Next, except for considering the LQG structure, we could consider the convergence of MFG with common noise under more general structures. Furthermore, in this paper, we require positive values for all sensitivities in the cost functional. We find that there is no global solution for MFG when the coefficient of the cost functional is negative, while there is a global solution when the coefficient is positive. So, it is also an interesting problem to investigate the explosion when some sensitivities take negative. 
%}

	\section{Appendix}
	\label{s:appendix}
	
	\subsection{Some explicit solutions on LQG-MFGs}
	\label{s:no_common}
	
	In this part, we only provide explicit solutions to some LQG-MFGs without the common noise.
	The methodology could be the utilization of the standard 
	Stochastic Maximum Principle or Dynamic Programming approach, 
	and all proofs will be omitted. 
	
	Suppose the position of a generic player $X_t$ follows
	\begin{equation*}
		%\begin{cases}
		dX_t = \alpha_t dt + \sigma dW_t, \
		X_0 \sim \mathcal{N}(0,1).
		%\end{cases}
	\end{equation*} 
	The goal of the generic player is to minimize the running cost
	\begin{equation*}
		\inf_{\alpha \in \mathcal{A}} \mathbb{E}\left[ \int_0^T\left(\frac{1}{2}\alpha_t^2 + 
		h \int_{\mathbb{R}} (X_t-y)^2m(t, dy)\right)dt \right], 
	\end{equation*}
	subject to
	\begin{equation*}
		m_t = \mathcal{L}aw(X_t), \quad \forall t \in [0, T],
	\end{equation*}
	where $h \in \mathbb R$ is a constant.
	
	Denote 
	$$ 
	V(x,t) = \inf_{\alpha} \mathbb{E}\left[ \left. \int_t^T \left( \frac{1}{2}\alpha_s^2 + h \int_{\mathbb{R}} 
	(X_s-y)^2m(s, dy) \right) ds \right \rvert   X_t = x\right].$$ 
	Note that the model can be characterized by Hamilton-Jacobian-Bellman equation coupled by 
	Fokker-Planck-Kolmogorov equation:
	$$\begin{array}{ll}
		\begin{cases}
			\partial_{t} V + \frac{1}{2}\sigma^2 \partial_{xx}V - \frac{1}{2} (\partial_{x} V)^2 + F(x,m) = 0, & 
			(t,x) \in [0,T]\times \mathbb{R}, \\
			\partial_{t} m - \frac{1}{2} \sigma^2 \partial_{xx}m - \partial_x(m \partial_{x} V) = 0, & 
			(t,x) \in [0,T]\times \mathbb{R}, \\
			m_0 \sim \mathcal{N}(0,1), V(x,T) = 0, & x\in \mathbb{R},
		\end{cases}
	\end{array}
	$$
	where $F(x,m) = h \int_{\mathbb{R}} (x-y)^2 m(dy)$.
	
	The monotonicity condition on the source term $F$ in the variable $m$ plays a crucial role in the uniqueness of 
	the MFG system. A monotone function $f:\mathbb R\mapsto \mathbb R$ is
	said to be increasing if it satisfies $(f(x_1) - f(x_2))(x_1 - x_2) \ge 0$ , and decreasing if $-f$ is increasing.
	This definition can be generalized to an infinite dimensional function $F(x, m)$.
	\begin{definition}
		The real function $F$ on $\mathbb{R} \times\mathcal{P}_2(\mathbb{R})$ is said to be monotone, if, for all $m \in \mathcal{P}_2(\mathbb{R})$, the mapping $\mathbb{R} \ni x \mapsto F(x,m) $ is at most of quadratic growth, 
		and for all $m_1$, $m_2$ it satisfies
		$$
		\int_{\mathbb{R}}\left( F(x,m_1)-F(x,m_2) \right)d(m_1-m_2)(x) \ge 0. 
		$$
		$F$ is said to be anti-monotone, if $(-F)$ is monotone.
	\end{definition}
	According to \cite{Car10}, if $F$ is monotone, then MFGs have at most one solution.
	Interestingly, the monotonicity of $F$ is dependent on the sign of $h$.
	\begin{lemma}
		\label{l:F}
		$F(x,m) = h \int_{\mathbb R} (x-y)^2 m(dy)$ is monotone if $h<0$, and anti-monotone if $h>0$.
	\end{lemma}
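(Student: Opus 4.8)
The plan is to reduce the infinite-dimensional monotonicity inequality to an elementary identity by expanding the square and exploiting that $m_1,m_2$ are probability measures. First I would write
\[
F(x,m) = h\Big(x^2 - 2x[m]_1 + [m]_2\Big),
\]
where $[m]_1 = \int_{\mathbb R} y\, m(dy)$ and $[m]_2 = \int_{\mathbb R} y^2\, m(dy)$ are both finite for $m\in\mathcal P_2(\mathbb R)$. This already settles the at-most-quadratic-growth requirement in the definition of monotonicity, since $x\mapsto F(x,m)$ is a quadratic polynomial with bounded coefficients.

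Next I would compute the difference, which is affine in $x$:
\[
F(x,m_1)-F(x,m_2) = h\Big(-2x\big([m_1]_1-[m_2]_1\big) + \big([m_1]_2-[m_2]_2\big)\Big).
\]
Integrating against the signed measure $m_1-m_2$, the constant term contributes $\big([m_1]_2-[m_2]_2\big)\int_{\mathbb R} d(m_1-m_2) = 0$, because $m_1$ and $m_2$ both have total mass one, while the linear term contributes $-2\big([m_1]_1-[m_2]_1\big)\int_{\mathbb R} x\, d(m_1-m_2)(x) = -2\big([m_1]_1-[m_2]_1\big)^2$. Hence
\[
\int_{\mathbb R}\Big(F(x,m_1)-F(x,m_2)\Big)\,d(m_1-m_2)(x) = -2h\,\big([m_1]_1-[m_2]_1\big)^2 .
\]

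From this closed form the conclusion is immediate: the right-hand side is nonnegative for every pair $m_1,m_2$ exactly when $h\le 0$, so $F$ is monotone when $h<0$; and it is nonpositive for every pair exactly when $h\ge 0$, so $-F$ is monotone, i.e. $F$ is anti-monotone, when $h>0$. There is no genuine obstacle here; the only point that needs a moment of care is observing that the second-moment terms cancel thanks to the unit mass of the measures, so that the sign of the whole expression is dictated solely by $h$ (and by the squared gap between the first moments). I would also note in passing that the inequality is strict unless $m_1$ and $m_2$ have the same mean, which is why this $F$ is not strictly monotone and is consistent with the failure of Lasry--Lions or displacement monotonicity for the MFG discussed in Section~\ref{s:mfg}.
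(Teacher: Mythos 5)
Your proof is correct. The paper states Lemma \ref{l:F} without proof (all proofs in Appendix \ref{s:no_common} are explicitly omitted), and your computation --- expanding $F$ through the first two moments, observing that the second-moment term integrates to zero against the signed measure $m_1-m_2$ of total mass zero, and reducing the pairing to the closed form $-2h\left([m_1]_1-[m_2]_1\right)^2$ whose sign is dictated by $h$ --- is exactly the standard argument one would supply.
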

	A natural question is how the MFG system behaves differently to the monotonicity of $F$?
	
	\subsubsection{Case I: $h > 0$}
	\begin{lemma}\label{l:h>0}
		For $h > 0$, there exists a solution (may not be unique) to the MFG system in the form of $V(x,t) = f_1(t)x^2 + f_3(t)$ and $m(t) \sim \mathcal{N}(0,\gamma(t))$, where
		\begin{align*}
			& f_1(t) = \sqrt{\frac{h}{2}}\frac{1-e^{-2\sqrt{2h}(T-t)}}{1+e^{-2\sqrt{2h}(T-t)}} \text{ , } \gamma(t) =  e^{-\int_0^t 4f_1(s)ds} \left(1+ \int_0^t \sigma^2 e^{\int_0^s 4f_1(u)du} ds \right),\\
			& f_3(t) = \int_t^T (\sigma^2 f_1(s) + h \gamma(s)) ds.
		\end{align*}
	\end{lemma}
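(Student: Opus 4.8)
The plan is to prove Lemma \ref{l:h>0} by a direct ansatz-and-verification argument for the coupled HJB--FPK system, exploiting the quadratic/Gaussian structure to reduce everything to scalar ODEs. First I would substitute the postulated Gaussian law $m(t)\sim\mathcal N(0,\gamma(t))$ into the source term. Since this measure has mean zero, the running cost collapses to a quadratic in $x$:
$$F(x,m(t)) = h\int_{\mathbb R}(x-y)^2\,m(t,dy) = h\bigl(x^2+\gamma(t)\bigr).$$
Plugging $V(x,t)=f_1(t)x^2+f_3(t)$ into the HJB equation, using $\partial_xV=2f_1x$, $\partial_{xx}V=2f_1$, and matching the $x^2$-coefficient and the constant term separately, I obtain the decoupled pair
$$f_1'-2f_1^2+h=0,\quad f_1(T)=0,\qquad\text{and}\qquad f_3'+\sigma^2f_1+h\gamma=0,\quad f_3(T)=0.$$
The first is a scalar Riccati equation with constant coefficients and equilibria $\pm\sqrt{h/2}$; one checks directly that the stated hyperbolic-tangent profile solves it with the terminal condition, and integrating the second equation backward from $T$ yields the claimed formula for $f_3$. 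The point of the hypothesis $h>0$ enters here: the solution is a \emph{bounded} $\tanh$ profile defined on all of $[0,T]$, in contrast with the blow-up that a negative $h$ would produce.

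Next I would treat the Fokker--Planck equation. With the feedback $\partial_xV=2f_1x$ inserted, the FPK equation reads $\partial_tm-\tfrac12\sigma^2\partial_{xx}m-\partial_x(2f_1x\,m)=0$, which is precisely the forward Kolmogorov equation of the linear SDE $dX_t=-2f_1(t)X_t\,dt+\sigma\,dW_t$ with $X_0\sim\mathcal N(0,1)$. Because this drift is linear and homogeneous in $x$, the law of $X_t$ remains Gaussian for all $t$, with mean identically $0$ and variance $\gamma(t)$ solving the linear ODE $\gamma'=-4f_1\gamma+\sigma^2$, $\gamma(0)=1$. The integrating-factor solution of this ODE is exactly the expression for $\gamma(t)$ in the statement, and $\gamma(t)>0$ throughout since it equals a strictly positive exponential times $1$ plus a nonnegative integral, so the Gaussian $\mathcal N(0,\gamma(t))$ is well defined.

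Finally I would close the fixed-point loop and invoke a verification step. The computation above shows that if the population law is $\mathcal N(0,\gamma(t))$, then the optimally controlled state has exactly that law, so the Gaussian-mean-zero ansatz used to evaluate $F$ is self-consistent; thus $(V,m)$ solves the MFG system. A routine verification argument — the quadratic $V$ has Lipschitz gradient, hence the feedback control $\hat\alpha_t=-2f_1(t)X_t$ is admissible in $\mathcal A$ and $V$ is indeed the value function — upgrades the formal PDE computation to a genuine MFG solution. I do not expect a substantive obstacle: the only real subtlety is the global (in time) solvability of the Riccati equation, which is automatic for $h>0$; uniqueness is not asserted, consistent with Lemma \ref{l:F}, since $F$ is anti-monotone in this regime and the Lasry--Lions argument does not apply.
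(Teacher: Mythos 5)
Your proof is correct and follows exactly the route the paper indicates (the paper states the lemma with all proofs omitted, citing the dynamic programming approach as the intended methodology): the Gaussian ansatz reduces $F$ to $h(x^2+\gamma(t))$, matching coefficients in the HJB gives the scalar Riccati equation $f_1'-2f_1^2+h=0$ with $f_1(T)=0$ solved globally by the stated $\tanh$ profile when $h>0$, the FPK equation becomes the forward equation of a linear SDE preserving the centered Gaussian family with variance $\gamma'=-4f_1\gamma+\sigma^2$, and the fixed point is self-consistent. The verification step and the remark on non-uniqueness via anti-monotonicity are both in order.
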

	
	\subsubsection{Case II: $h<0$}
	\begin{lemma}\label{l:h<0}
		For $h < 0$, there exists a unique solution  in $( t_0,T ]$ to the MFG system in the form of 
		$V(x,t) = g_1(t)x^2 + g_3(t)$ and $m(t) \sim \mathcal{N}(0,\lambda(t))$, where
		\begin{align*}
			& g_1(t) = - \sqrt{- \frac{h}{2}}\tan\left(\sqrt{-2h}(T-t)  \right) \text{ , } \lambda(t) = e^{-\int_0^t 4g_1(s)ds} \left(1+ \int_0^t \sigma^2 e^{\int_0^s 4g_1(u)du} ds \right), \\
			& g_3(t) = \int_t^T (\sigma^2 g_1(s) + h \lambda(s)) ds \text{ , } t_0 = max\left(0,T-\frac{1}{\sqrt{-2h}}\frac{\pi}{2}\right).
		\end{align*}
	\end{lemma}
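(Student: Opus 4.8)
The plan is to run the classical Gaussian/quadratic ansatz for linear--quadratic mean field games and then close the loop with a monotonicity-based uniqueness argument. \textbf{Step 1: ansatz and reduction to ODEs.} Since the state dynamics, the running cost, and the initial law $\mathcal N(0,1)$ are all symmetric under $x\mapsto-x$, I would look for a solution with $m(t)=\mathcal N(0,\lambda(t))$, $\lambda(0)=1$, and $V(x,t)=g_1(t)x^2+g_3(t)$, $g_1(T)=g_3(T)=0$. For such an $m$ the coupling reads $F(x,m(t))=h\int_{\mathbb R}(x-y)^2m(t,dy)=hx^2+h\lambda(t)$ because the mean vanishes. Substituting the ansatz into the HJB equation and matching the coefficient of $x^2$ and the constant term gives the pair (decoupled in $g_1$)
\begin{equation*}
g_1'=2g_1^2-h,\qquad g_3'=-\sigma^2 g_1-h\lambda,
\end{equation*}
with $g_1(T)=g_3(T)=0$; feeding the optimal drift $-\partial_x V=-2g_1(t)x$ into the Fokker--Planck equation keeps $m(t)$ Gaussian with zero mean and forces the variance to solve the linear ODE $\lambda'=-4g_1\lambda+\sigma^2$, $\lambda(0)=1$.

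\textbf{Step 2: solve the ODEs and verify.} The $g_1$-equation is an autonomous Riccati equation; for $h<0$ separation of variables (or a direct differentiation using $\sqrt{-2h}=2\sqrt{-h/2}$) gives $g_1(t)=-\sqrt{-h/2}\,\tan(\sqrt{-2h}\,(T-t))$. This is smooth and finite precisely on the backward interval where $\sqrt{-2h}(T-t)<\pi/2$, that is on $(t_0,T]$ with $t_0=\max(0,T-\tfrac{1}{\sqrt{-2h}}\tfrac\pi2)$, and $g_1\to-\infty$ as $t\downarrow t_0$ when $t_0>0$; in particular a genuine solution on all of $[0,T]$ requires $t_0=0$, i.e. $T\le \pi/(2\sqrt{-2h})$. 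Solving the linear $\lambda$-equation with the integrating factor $\exp(\int_0^t 4g_1)$ yields the stated formula for $\lambda$ (which dominates its homogeneous part $\exp(-\int_0^t 4g_1)$, hence stays strictly positive), and integrating the linear $g_3$-equation backward from $T$ gives $g_3(t)=\int_t^T(\sigma^2g_1+h\lambda)\,ds$. It then remains to verify that the resulting $(V,m)$ genuinely solves the HJB--FPK system with the correct terminal/initial data, that $V$ has at most quadratic growth and $m\in C((t_0,T];\mathcal P_2(\mathbb R))$, and that the feedback $\alpha^*_t=-2g_1(t)X_t$ is admissible (it is linear, hence Lipschitz on compact subintervals), so that the fixed-point identity $m(t)=\mathcal{L}aw(X_t)$ indeed holds; this part is a routine substitution.

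\textbf{Step 3: uniqueness.} By Lemma \ref{l:F}, for $h<0$ the coupling $F$ is monotone, so by the Lasry--Lions argument (cf. \cite{Car10}) the MFG system has at most one solution on any interval on which a solution exists; hence on $(t_0,T]$ it must coincide with the one built above. I expect the genuinely delicate points to be, first, making the monotonicity/duality computation rigorous in a class broad enough to cover \emph{all} competing solutions --- one needs enough integrability for the pairing $\int(F(x,m_1)-F(x,m_2))\,d(m_1-m_2)$ and for the integration by parts in the duality identity, which is where the restriction to $\mathcal P_2$ and the quadratic growth of $V$ enter; and second, being precise about the maximal interval of existence, since the closed form makes the blow-up at $t_0$ transparent but this is exactly why the statement is local in time and why, for large $T$, the $h<0$ MFG has no solution at all, in sharp contrast with the $h>0$ case of Lemma \ref{l:h>0}.
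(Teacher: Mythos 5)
Your proposal is correct and follows exactly the route the paper intends: the paper omits all proofs in this appendix but states that the methodology is the standard dynamic-programming/quadratic-ansatz reduction of the HJB--FPK system to the Riccati ODE $g_1'=2g_1^2-h$ (whose backward solution from $g_1(T)=0$ is the stated tangent function, blowing up at $t_0$), the linear variance equation $\lambda'=-4g_1\lambda+\sigma^2$, and the quadrature for $g_3$, with uniqueness for $h<0$ supplied by the monotonicity of $F$ (Lemma \ref{l:F}) and the Lasry--Lions argument cited from \cite{Car10}. Your verification of the coefficients and of the maximal existence interval is accurate, and your cautionary remarks about the integrability needed in the duality argument and about the local-in-time nature of the statement are consistent with the paper's own remark following the lemma.
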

	
	\subsubsection{Remark}
	When $h>0$, the cost is anti-monotone, and there exists at least one global solution.
	When $h<0$, the cost is monotone, and there exists at most one solution. 
	Unfortunately, this solution lives in a short period. 
	Lemma \ref{l:h<0} coincides with the notes in Section 3.8 of \cite{carmona2018} saying that due to the opposite time evolution of the system of HJB-FPK, the existence of the solution may exist for only a short period.

	%%%%%%%%%%%%%%%%%%%%%
	\subsection{Dynkin's formula for a regime-switching diffusion with a quadratic function} 
	\label{s:dynkin}
	
	Since the running cost \eqref{eq:running cost} has a quadratic growth in the state variable, the value function $V[\hat m](y, x, t)$ is expected to possess similar growth.
	Next, we present a version of Dynkin's formula for the functions of quadratic growth, which is sufficient for our purpose. Throughout this subsection, we will use $K$ in various places as a generic constant that varies from line to line.
	The notions of this subsection are independent of other parts of the paper.
	
	\begin{lemma}
		\label{l:chain rule}
		Let $X$ be the $\mathbb R^d$-valued process satisfying
		$$ X_t = X_0 + \int_0^t \left( \tilde{b}_1(Y_s, s) X_s + \tilde{b}_2(Y_s, s) \alpha_s \right) ds + \int_0^t \sigma(s) dW_s,$$
		where $Y$ is CTMC with a generator 
		$$Y \sim Q = (q_{ij})_{i, j = 1, 2, \ldots, \kappa},$$
		%$X$, $\alpha$ and $\sigma$ are bounded and take values in $\mathbb R^3$, 
		Suppose $\sigma(\cdot)$, $\tilde{b}_1(y, \cdot)$ and $\tilde{b}_2(y, \cdot)$ are continuous functions on $[0, T]$ 
		for every $y \in \mathcal Y :=\{1,2, \ldots, \kappa\}$. 
		If $X_0\in L^4$, $\alpha\in L_\mathbb F^4$ and 
		$f:\mathcal Y \times  R^d \times \mathbb R \mapsto \mathbb R$ satisfies, for some large $K$
		$$\sup_{y\in \mathcal Y, t\in [0, T]} 
		\{|f(y, x, t)| + (1 + |x|) |\nabla f(y, x, t)| + (1 + |x|)^2 |\Delta f(y, x, t)| +  |\partial_t f(y, x, t)| \}
		\le K(|x|^2 +1), $$
		%$\|\Delta f\|_\infty + \|\partial_t f\|_\infty <\infty$, 
		then the following identity holds for all $t\in [0, T]$:
		\begin{equation*}
			%\label{eq:ch1}
			\mathbb E \left[f(Y_t, X_t, t)\right] = 
			\mathbb E \left[f(Y_0, X_0, 0)\right] +
			\mathbb E \left[ \int_0^t (\partial_t + \mathcal L^{\alpha_s} + \mathcal Q) f(Y_s, X_s, s) ds \right],
		\end{equation*}
		where
		$$\mathcal L^a f(y, x, s) = \left(\frac 1 2 \text{Tr} \left(\sigma_s \sigma_s^\top \Delta \right)  + \left(\tilde{b}_{1y} x + \tilde{b}_{2y} a \right) \cdot \nabla_x \right) f(y, x, s)$$
		and
		$$\mathcal Q f(y, x, s) = \sum_{i=1}^n q_{y, i} f(i, x, s).$$
		
	\end{lemma}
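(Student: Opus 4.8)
The plan is to reduce the quadratic-growth identity to the classical Itô formula for a regime-switching diffusion by a truncation argument, with the fourth-moment hypotheses $X_0\in L^4$ and $\alpha\in L^4_{\mathbb F}$ supplying exactly the uniform integrability needed to pass to the limit.

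\textbf{Step 1 (the identity for nice functions).} First I would record the chain rule for test functions $\phi\in C^{1,2}(\mathcal Y\times\mathbb R^d\times[0,T])$ that are bounded together with $\nabla_x\phi$ and $\Delta\phi$. For such $\phi$, since $Y$ is an independent finite-state CTMC, the Itô formula for the continuous diffusion $X$ together with the jump structure of $Y$ gives the decomposition of $\phi(Y_t,X_t,t)-\phi(Y_0,X_0,0)$ into a finite-variation part $\int_0^t(\partial_t+\mathcal L^{\alpha_s}+\mathcal Q)\phi\,ds$, a Brownian martingale $\int_0^t\sigma_s\nabla_x\phi\cdot dW_s$, and a compensated pure-jump martingale carried by $Y$ whose compensator is precisely $\mathcal Q\phi$. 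Because $\phi$ and its derivatives are bounded and $\sigma,\tilde b_1,\tilde b_2$ are continuous (hence bounded) on $[0,T]$, both martingales are genuine martingales, so taking expectations removes them and yields the stated identity for $\phi$.

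\textbf{Step 2 (a priori moment bound).} Next I would prove $\mathbb E[\sup_{t\le T}|X_t|^4]<\infty$. Using the linear growth of the drift in $X$, the boundedness of $\tilde b_1,\tilde b_2,\sigma$ on $[0,T]$, the Burkholder--Davis--Gundy inequality for the stochastic integral, and Gr\"onwall's inequality, one obtains $\mathbb E[\sup_{s\le t}|X_s|^4]\le K\bigl(1+\mathbb E|X_0|^4+\mathbb E\int_0^T|\alpha_s|^4\,ds\bigr)<\infty$. In particular $\sup_{t\le T}\mathbb E|X_t|^2<\infty$, which is what controls the quadratic growth of $f$ and its derivatives.

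\textbf{Step 3 (truncation and limit).} Choose a smooth cutoff $\chi_n$ with $\chi_n\equiv 1$ on $\{|x|\le n\}$, $\chi_n\equiv 0$ on $\{|x|\ge 2n\}$, $|\nabla\chi_n|\le K/n$ and $|\Delta\chi_n|\le K/n^2$. Applying Step~1 to $\phi_n=\chi_n f$ (which is bounded with bounded derivatives) gives
$$\mathbb E\bigl[\phi_n(Y_t,X_t,t)\bigr]=\mathbb E\bigl[\phi_n(Y_0,X_0,0)\bigr]+\mathbb E\left[\int_0^t(\partial_t+\mathcal L^{\alpha_s}+\mathcal Q)(\chi_n f)\,ds\right].$$
Here $\mathcal Q(\chi_n f)=\chi_n\,\mathcal Q f$ exactly, since $\chi_n$ is independent of $y$, so the only corrections come from $\mathcal L^{\alpha_s}$ via $\Delta(\chi_n f)=\chi_n\Delta f+2\nabla\chi_n\cdot\nabla f+f\Delta\chi_n$ and $\nabla(\chi_n f)=\chi_n\nabla f+f\nabla\chi_n$. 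As $n\to\infty$, the boundary terms and the principal integrand $\chi_n(\partial_t+\mathcal L^{\alpha_s}+\mathcal Q)f$ converge to their untruncated counterparts by dominated convergence, the dominating functions being integrable by the quadratic-growth bounds on $f,\nabla f,\Delta f,\partial_t f$ together with the second- and fourth-moment bounds of Step~2.

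\textbf{Main obstacle.} The delicate point is showing that the cutoff correction terms vanish in the limit. They are all supported on the annulus $\{n\le|X_s|\le 2n\}$ and, using $|x|/n\le 2$ there, are dominated pointwise by $K\bigl(|X_s|^2+|\alpha_s|\,|X_s|+1\bigr)\mathbf{1}_{\{|X_s|\ge n\}}$; the term $f\,\nabla\chi_n$ against the $\tilde b_{2y}\alpha_s$ drift is the worst, producing the $|\alpha_s||X_s|$ factor. To send $\mathbb E\int_0^t K(|X_s|^2+|\alpha_s||X_s|+1)\mathbf{1}_{\{|X_s|\ge n\}}\,ds\to 0$ one needs the family $\{|X_s|^2+|\alpha_s||X_s|+1:s\le T\}$ to be uniformly integrable on $[0,T]\times\Omega$, since $\mathbf{1}_{\{|X_s|\ge n\}}\to 0$ in probability by Step~2. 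This uniform integrability is exactly where $X_0\in L^4$ and $\alpha\in L^4_{\mathbb F}$ are used: they bound this family in $L^2(ds\times d\mathbb P)$, hence guarantee uniform integrability, while the same bounds keep the Brownian martingale in Step~1 a true martingale after truncation so that its expectation remains zero throughout. Collecting these limits yields the asserted identity.
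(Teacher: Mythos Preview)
Your proof is correct, but it takes a genuinely different route from the paper. The paper argues directly: by It\^o's formula, $M_t^f := f(Y_t,X_t,t)-f(Y_0,X_0,0)-\int_0^t(\partial_t+\mathcal L^{\alpha_s}+\mathcal Q)f\,ds$ is a local martingale, and the whole proof consists in showing $M_t^f$ is uniformly integrable (hence a true martingale with zero expectation). This is done by checking that each of the four terms on the right is uniformly $L^2$-bounded in $t$, using the same $L^4$ moment estimate on $X$ that you derive in Step~2. No truncation is introduced, and there are no cutoff correction terms to track.

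Your approach instead first establishes the identity for bounded test functions, then approximates $f$ by $\chi_n f$ and passes to the limit, carefully bounding the commutator terms $f\nabla\chi_n$, $\nabla\chi_n\cdot\nabla f$, $f\Delta\chi_n$ on the annulus and invoking uniform integrability to kill them. This is more hands-on and avoids any explicit appeal to the ``local martingale $+$ UI $\Rightarrow$ true martingale'' principle, at the cost of more bookkeeping. The paper's method is shorter because it never has to compute or control the correction terms; on the other hand, your argument makes the role of the $L^4$ hypotheses (uniform integrability of $|X_s|^2+|\alpha_s||X_s|$) more transparent. Both rest on the same moment bound and the same quadratic-growth control of $f$ and its derivatives.
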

	\begin{proof}
		It's enough to show that the local martingale defined by It\^{o}'s formula
		\begin{equation}
			\label{eq:Mtf}
			M_t^f = f(Y_t, X_t, t) - f(Y_0, X_0, 0) - \int_0^t (\partial_t + \mathcal L^{\alpha_s} + \mathcal Q) f(Y_s, X_s, s) ds
		\end{equation}
		is uniformly integrable, hence is a true martingale. 
		
		First, note that from the assumptions on $X_0$ and $\alpha$, we have
		\begin{equation*}
		\begin{aligned}
	    \mathbb E \left[ \|X_t\|^4 \right]
		& \leq K \mathbb E \left[ \| X_0 \|^4 + \int_0^t \|\tilde{b}_1(Y_s, s) X_s + \tilde{b}_2(Y_s, s) \alpha_s\|^4 ds + \int_0^t \|\sigma_s W_s\|^4 ds \right] \\
		& \leq K \mathbb E \left[ \| X_0 \|^4 + \int_0^t \| X_s \|^4 ds + \int_0^t \| \alpha_s \|^4 ds + \int_0^t \|\sigma_s W_s\|^4 ds \right] \\
		& \leq K + K \int_0^t \mathbb E  \left[ \|X_s\|^4 \right] ds,
		\end{aligned}
		\end{equation*}
		where $K$ is a generic constant that varies from line to line. Then, by the Gr\"onwall's inequality, 
		$$\mathbb E \left[ \|X_t\|^4 \right] \leq Ke^{Kt} \leq K,$$
		which implies that $\{X_t: 0\le t\le T\}$ is $L^4$ bounded uniformly in $t$.
		
		On the other hand, since $x \mapsto f(y, x, t)$ is at most 
		quadratic growth uniformly in $(y, t)$, 
		we conclude that $f (Y_t, X_t, t)$ is uniformly  $L^2$ bounded  from the fact
		$$\sup_{t \in [0,T]} \mathbb E \left[f^2 (Y_t, X_t, t) \right] \le K \sup_{t\in [0, T]} \mathbb E \left[ \|X_t\|^4 \right] + K \le K.$$
		The uniform $L^2$-boundedness of $\int_0^t \partial_t f(Y_s, X_s, s) ds$ follows from our assumption on $\partial_t f$.
		Similarly, since $\mathcal Q f$ has a quadratic growth uniformly in $y$ 
		and $t$, and $$\left\{\int_0^t \mathcal Q f(Y_s, X_s, s) ds: 0\le t\le T\right\}$$ is $L^2$ bounded.
		At last, we have
		$$\displaystyle \begin{array}
			{ll}
			\displaystyle \mathbb E \left[ \left(\int_0^t \mathcal{L}^{\alpha_s} f(Y_s, X_s, s) ds \right)^2 \right]  
			\\ \displaystyle  \le 
			K \mathbb E \left[ \int_0^t \left(\left(\tilde{b}_1(Y_s, s) X_s + \tilde{b}_2(Y_s, s) \alpha_s \right)\cdot \nabla f + \frac 1 2 \text{Tr} \left(\sigma_s\sigma_s^\top \Delta f \right) \right)^2(Y_s, X_s, s) ds \right] 
			\\
			\displaystyle  \le K \mathbb E \left[ \int_0^t \|\tilde{b}_1(Y_s, s) X_s + \tilde{b}_2(Y_s, s) \alpha_s\|^2 \|\nabla f\|^2 (Y_s, X_s, s) ds \right] 
			\\ \displaystyle \hspace{1in}
			+ K \mathbb E \left[ \int_0^t  \frac 1 4 \|\text{Tr} \left(\sigma_s\sigma_s^\top \Delta f \right)\|^2(Y_s, X_s, s) ds \right] \\
			\displaystyle  \le K \mathbb E \left[ \int_0^t  \|\alpha_s\|^4  ds \right] + K \mathbb E \left[ \int_0^t  \|X_s\|^4  ds \right] 
			+K \mathbb E \left[ \int_0^t  |\nabla f|^4 (Y_s, X_s, s) ds \right] 
			\\ \displaystyle \hspace{1in} + K \mathbb E \left[ \int_0^t  \frac 1 4 \|\text{Tr}\Delta f\|^2 \left(Y_s, X_s, s \right) ds \right]. \\
		\end{array}
		$$
		Since $\nabla f$ is linear growth in $x$, the second term $\sup_{t \in [0, T]} \mathbb E \left[\int_0^t  \|\nabla f\|^4 (Y_s, X_s, s) ds\right]$ is finite.
		Together with assumptions on $\Delta f$ and $\alpha$, we have uniform $L^2$-boundedness of $\int_0^t \mathcal L^{\alpha_s} f (Y_s, X_s, s) ds$.
		
		As a result, each term of the right-hand side of \eqref{eq:Mtf} is uniform $L^2$-bounded in $t$, and thus $M_t^f$ belongs to $L^2_{\mathbb F}$ and this implies the uniform integrability.
	\end{proof}
	
	\subsection{Proof of the property of G}
	\label{s:appendix lipschitz}
	
	\begin{lemma}
	    Define
		$$\mathcal E_t(\phi)
		= \exp \left\{ \int_0^t \phi_s ds \right\},
		$$
		and
	\begin{equation*}
	    G_t(x, \phi_1, \phi_2, \phi_3, W) = x\mathcal E_t(\phi_1 - \phi_2) + \mathcal E_t(\phi_1 - \phi_2) \int_0^t \mathcal E_s (-\phi_1 + \phi_2) \left(\phi_2(s) \phi_3(s) ds + dW_s \right),
	\end{equation*}
	where $x$ is a given constant, $\phi_1, \phi_2, \phi_3$ are RCLL functions on $[0, T]$.
	Then
	\begin{equation*}
	    \begin{aligned}
	       & \mathbb E \left[ \left\vert G_t(x^1, \phi_1, \phi_2^1, \phi_3^1, W) - G_t(x^2, \phi_1, \phi_2^2, \phi_3^2, W) \right\vert^2 \right] \\
	       \leq & K \left(|x^1 - x^2|^2 + \sup_{0 \leq t \leq T}  |\phi_2^1(t) - \phi_2^2(t)|^2 + \sup_{0 \leq t \leq T} 
	       | \phi_3^1(t) - \phi_3^2(t) |^2 \right).
	    \end{aligned}
	    \end{equation*}
	\end{lemma}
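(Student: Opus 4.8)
The plan is to decompose $G_t$ into its three natural pieces and bound each difference separately, using throughout two elementary facts about the exponential weights. Write
$$
G_t(x,\phi_1,\phi_2,\phi_3,W) = \mathcal E_t(\phi_1-\phi_2)\,x \;+\; \mathcal E_t(\phi_1-\phi_2)\int_0^t \mathcal E_s(\phi_2-\phi_1)\,\phi_2(s)\phi_3(s)\,ds \;+\; \mathcal E_t(\phi_1-\phi_2)\int_0^t \mathcal E_s(\phi_2-\phi_1)\,dW_s ,
$$
noting $\mathcal E_s(\phi_2-\phi_1)=\mathcal E_s(\phi_1-\phi_2)^{-1}$. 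Since $\phi_1$ and each $\phi_2^i,\phi_3^i$ are RCLL on the compact interval $[0,T]$, they are bounded; hence there is a constant $K$, depending only on their $L^\infty$-norms and on $T$, with $K^{-1}\le \mathcal E_t(\pm(\phi_1-\phi_2^i))\le K$ for all $t\in[0,T]$ and $i=1,2$. Moreover, from $|e^a-e^b|\le e^{a\vee b}|a-b|$ one gets the Lipschitz estimate $\sup_{0\le t\le T}\bigl|\mathcal E_t(\phi_1-\phi_2^1)-\mathcal E_t(\phi_1-\phi_2^2)\bigr|\le K\sup_{0\le t\le T}|\phi_2^1(t)-\phi_2^2(t)|$, and likewise for $\mathcal E_s(\phi_2^i-\phi_1)$.

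For the initial-value term I would write $\mathcal E_t(\phi_1-\phi_2^1)x^1-\mathcal E_t(\phi_1-\phi_2^2)x^2$ as $\mathcal E_t(\phi_1-\phi_2^1)(x^1-x^2) + x^2\bigl(\mathcal E_t(\phi_1-\phi_2^1)-\mathcal E_t(\phi_1-\phi_2^2)\bigr)$, which by the two facts above is $O\bigl(|x^1-x^2|+\sup_{[0,T]}|\phi_2^1-\phi_2^2|\bigr)$. For the Lebesgue (drift) term I would telescope the integrand $\mathcal E_t(\phi_1-\phi_2)\mathcal E_s(\phi_2-\phi_1)\phi_2(s)\phi_3(s)$ factor by factor: each factor is bounded and, for fixed $\phi_1$, Lipschitz in $(\phi_2,\phi_3)$ with respect to the sup-norm, so the difference of the two integrands is pointwise in $s$ bounded by $K\bigl(\sup_{[0,T]}|\phi_2^1-\phi_2^2|+\sup_{[0,T]}|\phi_3^1-\phi_3^2|\bigr)$, and integrating over $s\in[0,t]\subseteq[0,T]$ costs only the factor $T$.

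The one term that is not pure real-variable manipulation is the stochastic integral $\mathcal E_t(\phi_1-\phi_2)\int_0^t \mathcal E_s(\phi_2-\phi_1)\,dW_s$, but its integrand is deterministic, so it is a Wiener integral and It\^o's isometry applies directly. Splitting the difference as
$$
\bigl(\mathcal E_t(\phi_1-\phi_2^1)-\mathcal E_t(\phi_1-\phi_2^2)\bigr)\int_0^t\mathcal E_s(\phi_2^1-\phi_1)\,dW_s \;+\; \mathcal E_t(\phi_1-\phi_2^2)\int_0^t\bigl(\mathcal E_s(\phi_2^1-\phi_1)-\mathcal E_s(\phi_2^2-\phi_1)\bigr)\,dW_s ,
$$
the second moment of the first summand is at most $\sup_{[0,T]}|\mathcal E_t(\phi_1-\phi_2^1)-\mathcal E_t(\phi_1-\phi_2^2)|^2\int_0^T\mathcal E_s(\phi_2^1-\phi_1)^2\,ds\le K\sup_{[0,T]}|\phi_2^1-\phi_2^2|^2$, and that of the second is at most $K\int_0^T|\mathcal E_s(\phi_2^1-\phi_1)-\mathcal E_s(\phi_2^2-\phi_1)|^2\,ds\le K\sup_{[0,T]}|\phi_2^1-\phi_2^2|^2$. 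Finally I would combine the three pieces by the triangle inequality and $(a+b+c)^2\le 3(a^2+b^2+c^2)$ and take expectations, obtaining the stated bound. I do not expect a genuine obstacle; the only point requiring care is the bookkeeping of constants, which must depend solely on the $L^\infty$-bounds of $\phi_1,\phi_2^i,\phi_3^i$ and on $T$ — precisely what guarantees the constant is uniform in $N$ when this lemma is applied to the Riccati data $\hat a_1^N$ in the proof of Lemma~\ref{l:compare}.
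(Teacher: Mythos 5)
Your proposal is correct and follows essentially the same route as the paper's proof: both decompose $G_t$ into the initial-value, drift, and Wiener-integral pieces, bound the exponential weights above and below by constants depending only on the $L^\infty$-norms and $T$, use the mean-value-theorem Lipschitz estimate for $\mathcal E_t(\cdot)$ in its exponent, and apply It\^o's isometry (with the same add-and-subtract splitting) to the deterministic-integrand stochastic integral. The only cosmetic difference is that the paper varies $x$, $\phi_3$, and $\phi_2$ one at a time while you telescope all three simultaneously; the estimates and constants are identical.
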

	\begin{proof}
	    Firstly, it can be shown that $G(\cdot, \phi_1, \phi_2, \phi_3, W)$ is Lipschitz continuous with respect to $x$
	    \begin{equation*}
	    \begin{aligned}
	        \mathbb E \left[ \left\vert G_t(x^1, \phi_1, \phi_2, \phi_3, W) - G(x^2, \phi_1, \phi_2, \phi_3, W) \right\vert \right] & \leq  \left\vert x^1 \mathcal{E}_t(\phi_1 - \phi_2) - x^2 \mathcal{E}_t(\phi_1 - \phi_2) \right\vert \\
	        & \leq \mathcal{E}_t(\phi_1 - \phi_2) |x^1 - x^2| \\
	        & \leq K(|\phi_1|_{\infty}, |\phi_2|_{\infty}, T) \mathbb |x^1 - x^2|.
	    \end{aligned}
	    \end{equation*}
	    Next, we have
	    \begin{equation*}
	    \begin{aligned}
	        & \mathbb E \left[ \left\vert G_t(x, \phi_1, \phi_2, \phi_3^1, W) - G(x, \phi_1, \phi_2, \phi_3^2, W) \right\vert^2 \right] \\
	        = &  \left\vert  \mathcal{E}_t(\phi_1 - \phi_2) \int_0^t \mathcal{E}_s(\phi_1 - \phi_2) \phi_2(s) (\phi_3^1(s) - \phi_3^2(s)) ds \right\vert^2 \\
	        \leq & \mathcal{E}_t(2\phi_1 - 2\phi_2) \left( \int_0^t \mathcal{E}_s(\phi_1 - \phi_2) |\phi_2(s)| |(\phi_3^1(s) - \phi_3^2(s))| ds \right)^2 \\
	        \leq &  K(|\phi_1|_{\infty}, |\phi_2|_{\infty}, T)  \left( \int_0^T |\phi_3^1(s) - \phi_3^2(s)| ds \right)^2 \\
	        \leq & K(|\phi_1|_{\infty}, |\phi_2|_{\infty}, T) 
	         \sup_{0 \leq t \leq T} \left\vert \phi_3^1(t) -\phi_3^2(t) \right \vert^2.
	    \end{aligned}
	    \end{equation*}
	   Similarly, for $\phi_2^1(\cdot), \phi_2^2(\cdot) \in C([0, T])$, 
	    \begin{equation*}
	    \begin{aligned}
	        & \mathbb E \left[ \left\vert G_t(x, \phi_1, \phi_2^1, \phi_3, W) - G(x, \phi_1, \phi_2^2, \phi_3, W) \right\vert^2 \right] \\
	        \leq & K \left\vert  x \mathcal E_t(\phi_1 - \phi_2^1) - x \mathcal E_t(\phi_1 - \phi_2^2)  \right\vert^2 \\
	        & \hspace{0.2 in} + K  \left\vert \mathcal E_t(\phi_1 - \phi_2^1) \int_0^t \mathcal E_s(-\phi_1 + \phi_2^1) \phi_2^1(s) \phi_3(s) ds - \mathcal E_t(\phi_1 - \phi_2^2) \int_0^t \mathcal E_s(-\phi_1 + \phi_2^2) \phi_2^2(s) \phi_3(s) ds  \right\vert^2 \\
	        & \hspace{0.2 in} + K \mathbb E \left[ \left\vert \mathcal E_t(\phi_1 - \phi_2^1) \int_0^t \mathcal E_s(-\phi_1 + \phi_2^1) d W_s - \mathcal E_t(\phi_1 - \phi_2^2) \int_0^t \mathcal E_s(-\phi_1 + \phi_2^2) d W_s  \right\vert^2  \right] \\
	        := & K(J_1 + J_2 + J_3).
	    \end{aligned}
	    \end{equation*}
	    Note that by the mean-value theorem and the continuity of $\phi_1, \phi_2^1$ and $\phi_2^2$ on $[0, T]$, we can get
	    \begin{equation*}
	    \begin{aligned}
	       J_1 &= \left\vert  x \mathcal E_t(\phi_1 - \phi_2^1) - x \mathcal E_t(\phi_1 - \phi_2^2)  \right\vert^2 \\
	        &= x^2 \left(e^{\int_0^t (\phi_1(s) - \phi_2^1(s))ds} - e^{\int_0^t (\phi_1(s) - \phi_2^2(s))ds} \right)^2 \\
	        & \leq K\left(x, \left\vert \phi_2^1 \right\vert_{\infty}, \left\vert \phi_2^2 \right\vert_{\infty}, T  \right) e^{\int_0^t 2\phi_1(s) ds} \left\vert \phi_2^1 - \phi_2^2 \right\vert_{\infty}^2 \\
	        & \leq K \left(x, |\phi_1|_{\infty}, \left\vert \phi_2^1 \right\vert_{\infty}, \left\vert \phi_2^2 \right\vert_{\infty}, T  \right) \left\vert \phi_2^1 - \phi_2^2 \right\vert_{\infty}^2,
	    \end{aligned}
	    \end{equation*}
	    and 
	    \begin{equation*}
	    \begin{aligned}
	       J_3 &= \mathbb E \left[ \left\vert \mathcal E_t(\phi_1 - \phi_2^1) \int_0^t \mathcal E_s(-\phi_1 + \phi_2^1) d W_s - \mathcal E_t(\phi_1 - \phi_2^2) \int_0^t \mathcal E_s(-\phi_1 + \phi_2^2) d W_s  \right\vert^2 \right] \\
	       &= \mathbb E \left[ \left\vert \mathcal E_t(\phi_1 - \phi_2^1) \int_0^t \mathcal E_s(-\phi_1 + \phi_2^1) d W_s - \mathcal E_t(\phi_1 - \phi_2^1) \int_0^t \mathcal E_s(-\phi_1 + \phi_2^2) d W_s  \right. \right.  \\
	       & \hspace{0.5in} +  \left. \left. \mathcal E_t(\phi_1 - \phi_2^1) \int_0^t \mathcal E_s(-\phi_1 + \phi_2^2) d W_s - \mathcal E_t(\phi_1 - \phi_2^2) \int_0^t \mathcal E_s(-\phi_1 + \phi_2^2) d W_s  \right\vert^2 \right] \\
	       &\leq 2 \mathcal E_t(2\phi_1 - 2\phi_2^1) \int_0^t \left(\mathcal E_s(-\phi_1 + \phi_2^1)- \mathcal E_s(-\phi_1 + \phi_2^2) \right)^2 ds \\
	       & \hspace{0.5in} + 2 \left(\mathcal E_t(\phi_1 - \phi_2^1) - \mathcal E_t(\phi_1 - \phi_2^2) \right)^2 \int_0^t \mathcal E_s(-2\phi_1 +2 \phi_2^2) ds \\
	       & \leq K \left(|\phi_1|_{\infty}, \left\vert \phi_2^1 \right\vert_{\infty}, \left\vert \phi_2^2 \right\vert_{\infty}, T  \right) \left\vert \phi_2^1 - \phi_2^2 \right\vert_{\infty}^2.
	    \end{aligned}
	    \end{equation*}
	    Lastly, using the similar argument, we have
	    \begin{equation*}
	    \begin{aligned}
	       J_2 &= \left\vert \mathcal E_t(\phi_1 - \phi_2^1) \int_0^t \mathcal E_s(-\phi_1 + \phi_2^1) \phi_2^1(s) \phi_3(s) ds - \mathcal E_t(\phi_1 - \phi_2^2) \int_0^t \mathcal E_s(-\phi_1 + \phi_2^2) \phi_2^2(s) \phi_3(s) ds  \right\vert^2  \\
	       &= \left\vert \mathcal E_t(\phi_1 - \phi_2^1) \int_0^t \mathcal E_s(-\phi_1 + \phi_2^1) \phi_2^1(s) \phi_3(s) ds - \mathcal E_t(\phi_1 - \phi_2^2) \int_0^t \mathcal E_s(-\phi_1 + \phi_2^1) \phi_2^1(s) \phi_3(s) ds  \right. \\
	       & \hspace{0.5in} +  \left.  \mathcal E_t(\phi_1 - \phi_2^2) \int_0^t \mathcal E_s(-\phi_1 + \phi_2^1) \phi_2^1(s) \phi_3(s) ds - \mathcal E_t(\phi_1 - \phi_2^2) \int_0^t \mathcal E_s(-\phi_1 + \phi_2^2) \phi_2^2(s) \phi_3(s) ds  \right\vert^2 \\
	       &\leq 2 \left\vert \left( \mathcal E_t(\phi_1 - \phi_2^1) - \mathcal E_t(\phi_1 - \phi_2^2) \right) \int_0^t \mathcal E_s(-\phi_1 + \phi_2^1) \phi_2^1(s) \phi_3(s) ds \right\vert^2  \\
	       & \hspace{0.5in} + 2  \left\vert \mathcal E_t(\phi_1 - \phi_2^2) \left( \int_0^t \mathcal E_s(-\phi_1 + \phi_2^1) \phi_2^1(s) \phi_3(s) ds - \int_0^t \mathcal E_s(-\phi_1 + \phi_2^2) \phi_2^2(s) \phi_3(s) ds  \right) \right\vert^2 \\
	       & \leq K \left(|\phi_1|_{\infty}, \left\vert \phi_2^1 \right\vert_{\infty}, \left\vert \phi_2^2 \right\vert_{\infty}, \left\vert \phi_3 \right\vert_{\infty}, T \right) \left\vert \phi_2^1 - \phi_2^2 \right\vert_{\infty}^2 \\
	       & \hspace{0.5in} + 2  \left\vert \mathcal E_t(\phi_1 - \phi_2^2) \left( \int_0^t \mathcal E_s(-\phi_1 + \phi_2^1) \phi_2^1(s) \phi_3(s) ds - \int_0^t \mathcal E_s(-\phi_1 + \phi_2^2) \phi_2^1(s) \phi_3(s) ds  \right) \right. \\
	       & \hspace{0.5in} + \left. \mathcal E_t(\phi_1 - \phi_2^2) \left( \int_0^t \mathcal E_s(-\phi_1 + \phi_2^2) \phi_2^1(s) \phi_3(s) ds - \int_0^t \mathcal E_s(-\phi_1 + \phi_2^2) \phi_2^2(s) \phi_3(s) ds  \right) \right\vert^2 \\
	       & \leq K \left(|\phi_1|_{\infty}, \left\vert \phi_2^1 \right\vert_{\infty}, \left\vert \phi_2^2 \right\vert_{\infty}, \left\vert \phi_3 \right\vert_{\infty}, T \right) \left\vert \phi_2^1 - \phi_2^2 \right\vert_{\infty}^2.
	    \end{aligned}
	    \end{equation*}
	    Sum up the above inequalities for $J_1, J_2$ and $J_3$, then
	    $$\mathbb E \left[ \left\vert G_t(x, \phi_1, \phi_2^1, \phi_3, W) - G(x, \phi_1, \phi_2^2, \phi_3, W) \right\vert^2 \right] \leq K \left(x, |\phi_1|_{\infty}, \left\vert \phi_2^1 \right\vert_{\infty}, \left\vert \phi_2^2 \right\vert_{\infty}, \left\vert \phi_3 \right\vert_{\infty}, T \right) \left\vert \phi_2^1 - \phi_2^2 \right\vert_{\infty}^2.$$
	    Thus, we can obtain the desired result.

	\end{proof}

	\subsection{Proof of the existence and uniqueness of the ODE system}
	
	Consider the following ODE system
	\begin{equation}
		\label{eq:a}
		\begin{aligned}
			\begin{cases}
				\displaystyle a_y' + C_1 \tilde{b}_{1y} a_y - C_2 \tilde{b}_{2y}^2 a_y^2 + \sum_{i=1}^{\kappa} q_{y,i} a_i + h_y = 0, \\
				\displaystyle a_y(T) = g_y,
			\end{cases}
		\end{aligned}
	\end{equation}
	for $y \in \mathcal Y = \{1, 2, \dots, \kappa\}$, where $C_1, C_2, h_y, g_y$ are in $\mathbb R^+$. We need to show the existence and uniqueness of the solution to \eqref{eq:a}. Define $T_y^{(N)}$ as
	\begin{align*}
		& T_y^{(N)}[a](t)  = \left[\left(g_y + \int_t^T \left(h_y + C_1 \tilde{b}_{1y}(s) a_y(s) - C_2 \tilde{b}^2_{2y}(s) a_y^2(s) + \sum_{i=1}^{\kappa}q_{y,i}a_i(s) \right) ds \right) \wedge N\right]\vee 0,
	\end{align*}
	where $a = [a_1, a_2, \dots, a_{\kappa}]^\top$. Let $D = \{ f \in C([0,T]): 0 \le \sup_{t \in [0,T]}f(t)\le N \}$. Note that $T_{y}^{(N)}(y \in \mathcal Y)$ maps $D^\kappa$ to $D^\kappa$.
	\begin{lemma}
		\label{l:existN}
		For fixed $N$, there exists a unique solution in $C([0,T])$ to 
		\begin{equation}
			\label{aN}
			a = T_y^{(N)}[a].
		\end{equation}
	\end{lemma}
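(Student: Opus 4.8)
The plan is to read \eqref{aN} as a fixed point equation $a = T^{(N)}[a]$ with $T^{(N)} = (T_1^{(N)},\dots,T_\kappa^{(N)})$ and to apply the Banach fixed point theorem on $D^{\kappa}$, which I will equip with a Bielecki-type weighted norm to get a \emph{global} contraction in one shot. It is convenient to write $T_y^{(N)}[a] = \Psi\circ\Phi_y[a]$, where $\Phi_y[a](t) = g_y + \int_t^T\!\big(h_y + C_1\tilde b_{1y}(s)a_y(s) - C_2\tilde b_{2y}^2(s)a_y^2(s) + \sum_{i=1}^{\kappa}q_{y,i}a_i(s)\big)\,ds$ and $\Psi(x) = (x\wedge N)\vee 0$ is the truncation, which is $1$-Lipschitz on $\mathbb R$.

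First I would check that $T^{(N)}$ maps $D^{\kappa}$ into itself: since $\Psi$ takes values in $[0,N]$, each component $T_y^{(N)}[a]$ has range in $[0,N]$, so a fortiori $0\le\sup_{[0,T]}T_y^{(N)}[a]\le N$; and for $a\in D^{\kappa}$ the functions $a_y$ are continuous hence bounded on $[0,T]$, the integrand of $\Phi_y[a]$ is continuous, $t\mapsto\Phi_y[a](t)$ is $C^1$, and composition with the continuous $\Psi$ keeps it in $C([0,T])$. Next comes the key estimate. For $a,b\in D^{\kappa}$ we have $\sup_{[0,T]}|a_y|,\sup_{[0,T]}|b_y|\le N$, hence $|a_y^2(s)-b_y^2(s)| = |a_y(s)+b_y(s)|\,|a_y(s)-b_y(s)| \le 2N\,|a_y(s)-b_y(s)|$; this is precisely where the truncation earns its keep, since the quadratic term is only \emph{locally} Lipschitz. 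Using that $\Psi$ is $1$-Lipschitz and setting, with $\|v\| := \max_{y\in\mathcal Y}|v_y|$,
$$ L \;:=\; \max_{y\in\mathcal Y}\Bigl(C_1\,|\tilde b_{1y}|_\infty + 2C_2 N\,|\tilde b_{2y}|_\infty^2\Bigr) + \kappa\,\max_{y,i}|q_{y,i}| \;<\;\infty $$
(finite by (A0)), one obtains
\begin{equation*}
\bigl|T_y^{(N)}[a](t) - T_y^{(N)}[b](t)\bigr| \;\le\; L\int_t^T \|a(s) - b(s)\|\,ds, \qquad \forall\,y\in\mathcal Y,\ t\in[0,T].
\end{equation*}

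Finally I would introduce $\|f\|_\lambda := \sup_{t\in[0,T]} e^{-\lambda(T-t)}\,\|f(t)\|$, a norm equivalent to the sup norm, and fix $\lambda>L$. Since $\|a(s)-b(s)\|\le e^{\lambda(T-s)}\|a-b\|_\lambda$, the displayed estimate gives $|T_y^{(N)}[a](t)-T_y^{(N)}[b](t)| \le L\|a-b\|_\lambda\int_t^T e^{\lambda(T-s)}\,ds \le (L/\lambda)\,e^{\lambda(T-t)}\,\|a-b\|_\lambda$; multiplying by $e^{-\lambda(T-t)}$ and taking the supremum over $t$ and the maximum over $y$ yields $\|T^{(N)}[a]-T^{(N)}[b]\|_\lambda \le (L/\lambda)\|a-b\|_\lambda$, a strict contraction. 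As $D^{\kappa}$ is a closed subset of $C([0,T];\mathbb R^\kappa)$ and hence complete under $\|\cdot\|_\lambda$, Banach's fixed point theorem yields a unique $a^\star\in D^{\kappa}$ with $a^\star = T^{(N)}[a^\star]$. Uniqueness within all of $C([0,T];\mathbb R^\kappa)$ is then automatic, since any solution of \eqref{aN} equals $T^{(N)}$ applied to itself and therefore lies in the range of $T^{(N)}$, which is contained in $D^{\kappa}$. I do not expect a genuine obstacle here; the only point requiring care is that $T^{(N)}$ is \emph{not} a contraction on all of $C([0,T];\mathbb R^\kappa)$ (again because of the quadratic term), which is circumvented by working on the bounded invariant set $D^{\kappa}$ where the Lipschitz constant $2N$ is available — equivalently, one could replace the weighted-norm step by solving on $[T-\delta,T]$ with $L\delta<1$ and patching backwards over finitely many steps, $L$ being independent of the subinterval.
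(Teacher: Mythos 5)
Your proposal is correct and follows essentially the same route as the paper: both work on the truncated set $D^{\kappa}$, exploit the cutoff to get the Lipschitz constant $2N$ for the quadratic term, and apply the Banach fixed point theorem under an exponentially weighted (Bielecki-type) norm with a sufficiently large weight parameter. The only differences are cosmetic (your weight $e^{-\lambda(T-t)}$ versus the paper's $e^{kt}$, and your added remarks on the self-mapping property and on uniqueness extending from $D^{\kappa}$ to all of $C([0,T])$), so no further comparison is needed.
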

	\begin{proof}
		Denote the norm $\|f \|_k = \left\| e^{kt} \max_{y \in \mathcal Y}\lvert f_{y}\rvert\right\|_\infty$, where $k$ needs to be determined later and $f$ is a $\kappa$ dimensional vector with entry of $f_{y}, y \in \mathcal Y$, which is equivalent to the infinite norm. Define the iteration rule $a_{y}^{(n+1)} = T_{y}^{(N)}\left[a_y^{(n)}\right]$ for $y \in \mathcal Y$. Note that
		\begin{equation*}
			\begin{aligned}
				& \left\| e^{kt}\left(a_y^{(n+1)}(t) - a_y^{(n)}(t) \right) \right\|_\infty \\
				\le & \sup_{t \in[0,T]} e^{kt}\int_t^T \left(C_1 \left \lvert \tilde{b}_{1y} \right\rvert_{\infty} \left \lvert a_y^{(n)}(s) - a_y^{(n-1)}(s) \right\rvert + C_2 \left \lvert \tilde{b}_{2y} \right\rvert^2_{\infty} \left\lvert \left(a_y^{(n)}(s)\right)^2-\left(a_y^{(n-1)}(s)\right)^2\right\rvert \right. \\
				& \hspace{1 in} + \left. \sum_{i=1}^{\kappa} q_{y,i} \left\lvert a_i^{(n)}(s)-a_i^{(n-1)}(s)\right\rvert \right)  ds \\
				\le & \sup_{t \in[0,T]} e^{kt}\int_t^T \left(C_1 \left \lvert \tilde{b}_{1y} \right\rvert_{\infty} \left \lvert a_y^{(n)}(s) - a_y^{(n-1)}(s) \right\rvert + 2NC_2 \left \lvert \tilde{b}_{2y} \right\rvert^2_{\infty} \left\lvert a_y^{(n)}(s)-a_y^{(n-1)}(s)\right\rvert \right. \\
				& \hspace{1 in} + \left. \sum_{i=1}^{\kappa} q_{y,i} \left\lvert a_i^{(n)}(s)-a_i^{(n-1)}(s)\right\rvert \right)  ds \\
				\le & \sup_{t \in[0,T]} e^{kt}\int_t^T e^{-ks}\left( C_1 \left \lvert \tilde{b}_{1y} \right\rvert_{\infty} +  2NC_2 \left \lvert \tilde{b}_{2y} \right\rvert^2_{\infty} + \kappa \max_{i \in \mathcal Y} |q_{y,i}| \right) \left\| a^{(n)}- a^{(n-1)} \right\|_k ds \\
				\le & \frac{C_1 \left \lvert \tilde{b}_{1y} \right\rvert_{\infty} +  2NC_2 \left \lvert \tilde{b}_{2y} \right\rvert^2_{\infty} + \kappa \max_{i \in \mathcal Y} |q_{y,i}|}{k} \left\| a^{(n)}- a^{(n-1)} \right\|_k.
			\end{aligned}
		\end{equation*}
		Choose $k > C_1 \left \lvert \tilde{b}_{1y} \right\rvert_{\infty} +  2NC_2 \left \lvert \tilde{b}_{2y} \right\rvert^2_{\infty} + \kappa \max_{i \in \mathcal Y} |q_{y,i}|$, then 
		\begin{align*}
			\left\| a^{(n+1)}- a^{(n)} \right\|_k \le \frac{C_1 \left \lvert \tilde{b}_{1y} \right\rvert_{\infty} +  2NC_2 \left \lvert \tilde{b}_{2y} \right\rvert^2_{\infty} + \kappa \max_{i \in \mathcal Y} |q_{y,i}|}{k} \left\| a^{(n)}- a^{(n-1)} \right\|_k,
		\end{align*}
		which gives us a contraction mapping from $D^\kappa$ to $D^\kappa$. Hence, by the Banach fixed point theorem, there exists a unique solution to \eqref{aN}.
	\end{proof}
	Next, we want to show that for large enough $N$, the solution to \eqref{aN} is also the solution to \eqref{eq:a}. 
	\begin{lemma}\label{l:bdN}
		For $$N \ge e^{KT} \left(\sum_{y=1}^{\kappa} g_y+T \sum_{y=1}^{\kappa} h_y \right),$$ where $K := C_1 \max_{y \in \mathcal Y} \left\lvert \tilde{b}_{1y} \right\rvert_{\infty} + \max_{i \in \mathcal Y} \sum_{y=1}^{\kappa} |q_{y,i}|$, the solution $a^{(N)}$ to \eqref{aN} satisfies the inequalities
		\begin{align}
			\label{eq:bound}
			0 \le g_y + \int_t^T \left(h_y +C_1 \tilde{b}_{1y}(s) a_y^{(N)}(s) - C_2 \tilde{b}_{2y}^2(s) \left(a_y^{(N)}(s) \right)^2 + \sum_{i=1}^{\kappa} q_{y,i} a_i^{(N)}(s) \right) ds \le N 
		\end{align}
		for all $t \in [0,T]$, where $y \in \mathcal Y$.
	\end{lemma}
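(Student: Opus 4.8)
The plan is to show that, for $N$ as large as required, the truncations $(\cdot\wedge N)\vee 0$ built into the operator $T^{(N)}$ are inactive at the fixed point, so that $a^{(N)}$ in fact solves the untruncated system \eqref{eq:a}. Write
\[
\Phi_y(t):=g_y+\int_t^T\!\Big(h_y+C_1\tilde{b}_{1y}(s)a_y^{(N)}(s)-C_2\tilde{b}_{2y}^2(s)\big(a_y^{(N)}(s)\big)^2+\textstyle\sum_{i=1}^\kappa q_{y,i}a_i^{(N)}(s)\Big)\,ds
\]
for the quantity that \eqref{eq:bound} asserts to lie in $[0,N]$. The fixed point identity $a^{(N)}=T^{(N)}[a^{(N)}]$ says precisely $a_y^{(N)}(t)=(\Phi_y(t)\wedge N)\vee 0$, so that $0\le a_y^{(N)}\le N$ automatically; moreover $\Phi_y\in C^1([0,T])$ with $\Phi_y(T)=g_y$ and $\Phi_y'(t)=-\big(h_y+C_1\tilde{b}_{1y}(t)a_y^{(N)}(t)-C_2\tilde{b}_{2y}^2(t)(a_y^{(N)}(t))^2+\sum_i q_{y,i}a_i^{(N)}(t)\big)$. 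It remains to show $0\le\Phi_y(t)\le N$ for all $y,t$.

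For the lower bound I would argue by a first-exit/comparison argument. Suppose $\min_y\Phi_y$ were negative somewhere; since $\min_y\Phi_y(T)=\min_y g_y>0$ and $\Phi$ is continuous, there is a largest $t_0\in[0,T)$ with $\min_y\Phi_y(t_0)=0$, attained at some $y_0$, and with $\Phi_y(t)>0$ for all $y$ and all $t\in(t_0,T]$. Because $\Phi_{y_0}(t_0)=0$ we have $a_{y_0}^{(N)}(t_0)=(0\wedge N)\vee 0=0$, so both the Riccati term and the $\tilde{b}_1$-term vanish there and
\[
\Phi_{y_0}'(t_0)=-h_{y_0}-\sum_{i\ne y_0}q_{y_0,i}\,a_i^{(N)}(t_0)\le -h_{y_0}<0,
\]
using $q_{y_0,i}\ge 0$ for $i\ne y_0$ and $a_i^{(N)}\ge 0$. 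But $\Phi_{y_0}(t_0)=0<\Phi_{y_0}(t)$ for $t>t_0$ forces the one-sided derivative $\Phi_{y_0}'(t_0)\ge 0$, a contradiction. The point to stress is that this step is self-contained: the only "bad" entry $q_{y_0,y_0}<0$ multiplies $a_{y_0}^{(N)}(t_0)=0$, and the quadratic term vanishes there too, so no information on the upper bound is used.

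For the upper bound, now that $\Phi_y\ge 0$ we have $a_y^{(N)}=\Phi_y\wedge N$. On any interval $[s,T]$ on which $\Phi_y\le N$ for all $y$ one has $a^{(N)}=\Phi$, so $\Phi$ solves the genuine ODE there; summing the equations, discarding the nonpositive terms $-C_2\tilde{b}_{2y}^2\Phi_y^2$, and estimating $C_1\sum_y\tilde{b}_{1y}\Phi_y\le (C_1\max_y|\tilde{b}_{1y}|_\infty)S$ and $\sum_{y,i}q_{y,i}\Phi_i=\sum_i\Phi_i\sum_y q_{y,i}\le(\max_i\sum_y|q_{y,i}|)S$ for $S:=\sum_y\Phi_y\ge 0$, one gets $-S'(t)\le\sum_y h_y+KS(t)$ with $K$ as in the statement. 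Backward Gr\"onwall then gives
\[
S(t)\le e^{K(T-t)}\Big(\sum_y g_y+T\sum_y h_y\Big)\le e^{KT}\Big(\sum_y g_y+T\sum_y h_y\Big)\le N,\qquad t\in[s,T],
\]
hence $\Phi_y(t)\le S(t)\le N$ on $[s,T]$. A continuity argument upgrades this to all of $[0,T]$: let $[t_1,T]$ be the maximal interval on which every $\Phi_y\le N$ (it is closed, nonempty, and contains $T$ since $\max_y g_y\le\sum_y g_y\le N$); if $t_1>0$ the displayed estimate at $t_1$ is strict, so $\max_y\Phi_y(t_1)<N$, and by continuity $\Phi_y<N$ just to the left of $t_1$, contradicting maximality. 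Hence $t_1=0$.

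I expect the main obstacle to be the bookkeeping around the coupling — both the mutual coupling of the $\kappa$ Riccati equations through the generator $Q$, and the apparent circular dependence between the two a priori bounds. The resolution is the order of operations together with two structural facts: the lower bound is proved first and is self-contained because the negative diagonal of $Q$ is switched off exactly at the critical point (and likewise the Riccati quadratic), while the generator contribution to the upper bound is absorbed by the crude $\sum_y|q_{y,i}|$-estimate feeding a single scalar Gr\"onwall inequality, with the quadratic term always on the favorable side.
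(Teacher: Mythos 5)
Your proof is correct, and at its core it follows the same two-step strategy as the paper: positivity via a contradiction at a critical time that exploits $h_y>0$ and $q_{y,i}\ge 0$ for $i\ne y$, then an upper bound by summing over $y\in\mathcal Y$, discarding the quadratic term, and applying Gr\"onwall with the same constant $K$. Two points of execution differ, both in your favor. First, for positivity the paper runs an induction through the states: it locates the last zero of $a_1$, deduces that some other $a_y$ must be negative there, and repeats until the states are exhausted; as written this presupposes that $a_1$ itself vanishes, which does not follow from the hypothesis that some $a_y$ fails to be positive somewhere. Your single first-exit argument applied to $\min_y\Phi_y$ avoids both the induction and that gap. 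Second, you keep the distinction between the truncated fixed point $a^{(N)}=(\Phi\wedge N)\vee 0$ and the untruncated integral $\Phi$ (which is what \eqref{eq:bound} is actually about), using $a^{(N)}\ge 0$ as a free input to the lower bound and a continuation argument to deactivate the cap $N$ before invoking the ODE; the paper simply computes with \eqref{eq:a} directly, which is legitimate only once the truncations are known to be inactive. One small tightening: in your continuation step the strict inequality $\max_y\Phi_y(t_1)<N$ for $t_1>0$ should be read off from the sharper Gr\"onwall bound $S(t_1)\le e^{K(T-t_1)}\left(\sum_y g_y+(T-t_1)\sum_y h_y\right)$, which is strictly below $N$ because $h_y>0$; the chain you displayed, with $T\sum_y h_y$ inside, is strict only when $K>0$.
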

	\begin{proof}
		For simplicity of notations, $a_y$ is used instead of $a_y^{(N)}$ for $y \in \mathcal Y$ if there is no confusion. 
		
		First, for $y \in \mathcal Y$, we prove the positiveness of $a_y$ by contradiction. Suppose $a_y \ (y \in \mathcal Y)$ are not positive functions on $[0,T]$. Since $a_1$ is continuous and $a_1(T) = g_1 > 0$, there exists some $\tau_1 \in [0,T]$ as the closest time to $T$ such that $a_1(\tau_1) = 0$. Note that finding such a $\tau_1$ is possible. Let $t_n \in [0,T]$ be a non-decreasing sequence such that $a_1(t_n) = 0$, there exists some $\tau_1$ such that $t_n \rightarrow \tau_1 < T$ as $n \to \infty$ since $a_1$ is continuous and $a_1(T) = g_1 >0$. By the continuity of $a_1$, we have $a_1(\tau_1) = 0$, which gives the desirable point $\tau_1$. Then for all $t \in (\tau_1,T]$, $a_1(t) > 0$ and it implies that $a_1'(\tau_1) >0$. In this case, plugging $t = \tau_1$ to \eqref{eq:a}, we have 
		$$a_1'(\tau_1) = -h_1- \sum_{i\neq 1}^{\kappa} q_{1,i} a_i(\tau_1) > 0,$$ 
		which implies there is some $y \in \mathcal Y$ and $y \neq 1$ such that $a_y(\tau_1) < 0$. Without loss of generality, we let $a_2(\tau_1) < 0$. Since $a_2$ is continuous on $[0,T]$ and $a_2(T) = g_2 > 0$, from the intermediate value theorem, there exists some $\tau_2 \in (\tau_1,T)$ such that $a_2(\tau_2) = 0$ and $a_2'(\tau_2) > 0$. This indicates that $a_2'(\tau_2) = -h_2 - \sum_{i \neq 2}^{\kappa} q_{2,i} a_i(\tau_2) > 0$
		by plugging $t = \tau_2$ back to \eqref{eq:a}, and it implies that there is some $y \in \mathcal Y$ and $y \neq 1, 2$ such that $a_y(\tau_2) < 0$ since we already know $a_1(\tau_2) > 0$. Without loss of generality, we can let $a_3(\tau_2) < 0$. By induction with the same argument, there is a $\tau_{\kappa} \in (\tau_{\kappa - 1}, T)$ such that $a_{\kappa}(\tau_{\kappa}) = 0$ and $a_{\kappa}'(\tau_{\kappa}) > 0$, which gives
		$$a_{\kappa}'(\tau_{\kappa}) + h_{\kappa} + \sum_{i \neq \kappa}^{\kappa} q_{\kappa,i} a_i(\tau_{\kappa}) = 0.$$
		But it contradicts with the fact that 
		$$a_{\kappa}'(\tau_{\kappa}) > 0, \ h_{\kappa} > 0, \ q_{\kappa, i} > 0, \ a_i(\tau_{\kappa}) > 0$$ 
		for $i \in \{1, 2, \dots, \kappa - 1\}$. Thus the positiveness of $a_y$ on $[0, T]$ for all $y \in \mathcal Y$ is obtained.
		
		Next, we prove the upper boundness for the integral in \eqref{eq:bound}. Note that for all $t \in [0, T]$ and $y \in \mathcal Y$, let $\tau = T - t$, we have
		$$a_y'(\tau) = h_y + C_1 \tilde{b}_{1y}(\tau) a_y(\tau) - C_2 \tilde{b}^2_{2y}(\tau) a_y^2(\tau) + \sum_{i=1}^{\kappa}q_{y,i}a_i(\tau),$$
		and thus
		\begin{align*}
			\sum_{y=1}^{\kappa} a_y'(\tau)
			&= \sum_{y=1}^{\kappa} h_y + C_1 \sum_{y=1}^{\kappa} \tilde{b}_{1y}(\tau) a_y(\tau) - C_2 \sum_{y=1}^{\kappa} \tilde{b}^2_{2y}(\tau) a_y^2(\tau) + \sum_{y=1}^{\kappa} \sum_{i=1}^{\kappa}q_{y,i}a_i(\tau) \\
			&\leq \sum_{y=1}^{\kappa} h_y + C_1 \max_{y \in \mathcal Y} \left\lvert \tilde{b}_{1y} \right\rvert_{\infty} \sum_{y=1}^{\kappa} a_y(\tau) + \sum_{y=1}^{\kappa} \sum_{i=1}^{\kappa} |q_{y,i}| a_i(\tau) \\
			& \leq \sum_{y=1}^{\kappa} h_y + \sum_{i=1}^{\kappa} \left( C_1 \max_{y \in \mathcal Y} \left\lvert \tilde{b}_{1y} \right\rvert_{\infty} +  \sum_{y=1}^{\kappa} |q_{y,i}| \right) a_i(\tau) \\
			& \leq \sum_{y=1}^{\kappa} h_y + K \sum_{i=1}^{\kappa} a_i(\tau),
		\end{align*}
		where 
		$$K := C_1 \max_{y \in \mathcal Y} \left\lvert \tilde{b}_{1y} \right\rvert_{\infty} + \max_{i \in \mathcal Y} \sum_{y=1}^{\kappa} |q_{y,i}|$$
		with $\sum_{y=1}^{\kappa} a_y(T) = \sum_{y=1}^{\kappa} g_y$. By Gr\"onwall's inequality, for all $\tau \in [0, T]$,
		$$\sum_{y=1}^{\kappa} a_y(\tau) \le e^{KT} \left(\sum_{y=1}^{\kappa} g_y+T \sum_{y=1}^{\kappa} h_y \right).$$ 
		Hence $a_y(t) \le e^{KT} \left(\sum_{y=1}^{\kappa} g_y+T \sum_{y=1}^{\kappa} h_y \right)$ for all $t \in [0, T]$ and $y \in \mathcal Y$.
		Hence, when 
		$$e^{KT} \left(\sum_{y=1}^{\kappa} g_y+T \sum_{y=1}^{\kappa} h_y \right) \leq N,$$
		\eqref{eq:bound} holds.
	\end{proof}

	\begin{lemma}
		\label{l:eu_abc}
		With the given of $h_y, g_y \in \mathbb R^{+}$, $y \in \mathcal Y$, there exists a unique solution to the Riccati system \eqref{eq:ode1}.
	\end{lemma}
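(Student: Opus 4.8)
The plan is to exploit the cascade (triangular) structure of the Riccati system \eqref{eq:ode1}: the equations for $(a_y)_{y\in\mathcal Y}$ form a closed quadratic subsystem; once $(a_y)$ is known, the equations for $(b_y)$ are linear; and once $(a_y,b_y)$ are known, the equations for $(k_y)$ and for $(c_y)$ are each linear systems with a known forcing term. So the whole statement reduces to (i) solving the quadratic subsystem for $(a_y)$ globally on $[0,T]$ and uniquely, and (ii) invoking standard linear ODE theory for the three downstream stages.

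First I would handle $(a_y)$. The first line of \eqref{eq:ode1} is exactly \eqref{eq:a} with $C_1=C_2=2$, and $h_y,g_y\in\mathbb R^+$ by hypothesis (this is where (A1) is used). Lemma \ref{l:existN} provides, for each truncation level $N$, a unique fixed point $a^{(N)}\in C([0,T])^\kappa$ of the truncated integral operator $T^{(N)}$, which solves the truncated ODE. Lemma \ref{l:bdN} then shows that for $N\ge e^{KT}\bigl(\sum_y g_y+T\sum_y h_y\bigr)$ the cutoff is never active along $a^{(N)}$: the components are nonnegative (by the ordering/contradiction argument running through the $\kappa$ coordinates, which uses $h_y>0$ and $q_{y,i}>0$ for $i\ne y$) and bounded above by that same constant (via Grönwall). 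Hence for all large $N$ the fixed point $a^{(N)}$ is a genuine solution of the untruncated first equation of \eqref{eq:ode1} on all of $[0,T]$, and it is independent of $N$. For uniqueness, any $C^1$ solution $(a_y)$ of the subsystem is, by the very same positivity argument and the same Grönwall bound, nonnegative and bounded by that constant; for $N$ large it therefore takes values in $[0,N]$, hence is a fixed point of $T^{(N)}$, which is unique by Lemma \ref{l:existN}. This pins $(a_y)$ down uniquely, and being $C^1$ on the compact interval $[0,T]$ it is bounded with continuous derivative, so it feeds bounded continuous coefficients into the downstream equations.

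Next, with $(a_y)$ fixed, the second line of \eqref{eq:ode1} is a linear first-order system for $(b_y)_{y\in\mathcal Y}$ with terminal data $b_y(T)=g_y$, coefficients $2\tilde b_{1y}-4\tilde b_{2y}^2 a_y$ and generator entries $q_{y,i}$ all continuous and bounded on $[0,T]$ by (A0) and the boundedness of $(a_y)$; existence and uniqueness of a global solution follow from Theorem 12.1 in \cite{antsaklis2006}, exactly as already invoked in the proof of Theorem \ref{t:main}. With $(a_y,b_y)$ in hand and bounded, the $k_y$-equation reads $k_y' + 2\tilde b_{1y}k_y + \sum_i q_{y,i}k_i = 2\tilde b_{2y}^2 a_y^2 - 4\tilde b_{2y}^2 a_y b_y$, a linear system in $(k_y)$ with bounded continuous coefficients and bounded continuous forcing and $k_y(T)=0$; similarly the $c_y$-equation $c_y' + \sum_i q_{y,i}c_i = -(a_y+b_y)$ is linear in $(c_y)$ with $c_y(T)=0$. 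Both are uniquely globally solvable by the same linear ODE theorem. Chaining the three stages gives existence and uniqueness of $(a_y,b_y,c_y,k_y)$ for the full system \eqref{eq:ode1}.

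The only genuinely delicate point is the global solvability of the quadratic subsystem for $(a_y)$: a coupled Riccati equation can in principle blow up before reaching $t=0$, so the core of the argument is the a priori control already packaged in Lemmas \ref{l:existN}--\ref{l:bdN}, namely nonnegativity of the $a_y$ (ruling out escape to $-\infty$) together with the Grönwall upper bound (ruling out escape to $+\infty$), which forces the truncated problem never to feel its cutoff. Everything after that is routine linear theory, so I would keep the actual write-up short and simply assemble these cited ingredients in the cascade order above.
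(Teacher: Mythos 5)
Your proposal is correct and follows essentially the same route as the paper: the $a_y$ subsystem is handled via the truncated fixed-point construction of Lemma \ref{l:existN} combined with the positivity and Gr\"onwall bounds of Lemma \ref{l:bdN}, after which the $b_y$, $k_y$, and $c_y$ equations are solved as linear systems by Theorem 12.1 of \cite{antsaklis2006}. Your write-up is in fact slightly more careful than the paper's on one point worth keeping: you explicitly argue that \emph{any} solution of the untruncated quadratic subsystem satisfies the same a priori bounds and is therefore a fixed point of $T^{(N)}$ for large $N$, which is the step that actually transfers uniqueness from the truncated problem to \eqref{eq:ode1}.
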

	\begin{proof}
		The existence, uniqueness and boundedness of the solution to $a_y$ ($y \in \mathcal Y$) are shown in Lemma \ref{l:existN} and Lemma \ref{l:bdN}. Given $(a_y: y \in \mathcal Y)$, the coefficient functions $b_y$ ($y \in \mathcal Y$) form a linear ordinary differential equation system. Their existence and uniqueness are guaranteed by Theorem 12.1 in \cite{antsaklis2006}. Similarly, with the given of $(a_y, b_y: y \in \mathcal Y)$, the coefficient functions $c_y, k_y$ ($y \in \mathcal Y$) also form a linear ordinary differential equation system. Applying the Theorem 12.1 in \cite{antsaklis2006}, we can obtain the existence and uniqueness of $c_y, k_y$ ($y \in \mathcal Y$).
	\end{proof}

%%%%%%%%%%%%%%
	\subsection{Multidimensional Problem}
	\label{s:section5}
	
	In this subsection, we consider the multidimensional problem, which is a straightforward extension of the previous one-dimensional setup. The same type of Ricatti system to characterize the equilibrium and the value function is obtained, and we have a similar result as the Theorem \ref{t:main}.
	
	Suppose that $X_t$, $W_t$ and $\alpha_t$ take values in $\mathbb R^d$, and all components of $W_t$ are independent. Suppose that the dynamic of the generic player is given by
	\begin{equation*}
		X_t = X_0 + \int_0^t \left(\tilde{b}_1(Y_s, s) X_s + \tilde{b}_2(Y_s, s) \alpha_s \right) ds + W_t.
	\end{equation*}
	Consider the cost function
	\begin{equation*}
		\begin{aligned}
			&J[m](y,x,t,\bar\mu,\bar\nu) \\
			=& \mathbb E \left[\int_t^T \left(\frac{1}{2} \|\alpha_s\|_2^2 +h(Y_s) \int_{\mathbb R^d} \|X_s-z\|_2^2 m(dz) \right)ds+ \right. \\
			& \left.\left. g(Y_T)\int_{\mathbb R^d} \|X_T-z\|_2^2 m(dz) \right\vert  X_t = x, Y_t = y, \mu_t = \bar\mu, \nu_t = \bar\nu\right]  \\
			=& \mathbb E\left[\int_t^T \left(\frac{1}{2} \alpha_s^\top \alpha_s +h(Y_s) \left(X_s^\top X_s -2\mu_s^\top X_s+ \nu_s \cdot\1_d \right)\right)ds + \right. \\
			& \left.\left. g(Y_T)\left(X_T^\top X_T -2\mu_T^\top X_T+ \nu_T \cdot\1_d \right)\right\vert  X_t = x, Y_t = y, \mu_t = \bar\mu,\nu_t = \bar\nu \right],
		\end{aligned}
	\end{equation*}
	where $m$ is the joint density function in $\mathbb R^d$, and $\mu, \nu$ take value in $\mathbb R^d$. For $y \in \mathcal Y$, define the Riccati system
	\begin{equation}
		\label{eq:ODEsys_multi}
		\begin{cases}
				\displaystyle a_y' + 2\tilde{b}_{1y} a_y - 2\tilde{b}^2_{2y} a_y^2 + \sum_{i=1}^{\kappa} q_{y,i} a_i + h_y = 0, \\
				\displaystyle b_y' + \left(2 \tilde{b}_{1y} - 4\tilde{b}^2_{2y} a_y \right) b_y + \sum_{i=1}^{\kappa} q_{y,i} b_i + h_y  =0, \\
				\displaystyle c_y' + d a_y + d b_y + \sum_{i=1}^{\kappa} q_{y,i} c_i  = 0,\\
				\displaystyle k_y' -2\tilde{b}^2_{2y} a_y^2 + 4 \tilde{b}^2_{2y} a_y b_y + 2 \tilde{b}_{1y} k_y + \sum_{i=1}^{\kappa} q_{y,i} k_i = 0, \\
				\displaystyle a_y(T) = b_y(T) = g_y\text{ , } c_y(T) =  k_y(T) = 0. 
			\end{cases}
	\end{equation}
	\begin{theorem}[Verification theorem for MFGs]
		\label{t:main_multi}
		There exists a unique solution 
		$(a_y, b_y, c_y, k_y: y \in \mathcal Y)$ for the Riccati system \eqref{eq:ODEsys_multi}. 
		With these solutions, for $t \in [0, T]$,
		the MFG equilibrium path follows $\hat X = \hat X [\hat m]$ is given by
		\begin{equation*}
			d \hat X_t = \left( \tilde{b}_1(Y_t, t) \hat{X}_t - 2 \tilde{b}_2^2(Y_t, t)  a_{Y_t}(t) \left(\hat X_t - \hat{\mu}_t \right) \right)  dt + dW_t, \quad \hat X_0 = X_0,
		\end{equation*}
		with equilibrium control $
			\hat \alpha_t = - 2 \tilde{b}_2(Y_t, t) a_{Y_t}(t) \left(\hat X_t - \hat{\mu}_t \right),$
	    where 
	    $$d \hat{\mu}_t = \tilde{b}_1(Y_t, t) \hat{\mu_t} dt, \quad \hat{\mu}_0 = \mathbb E[X_0].$$
		Moreover, the value function $U$ %starting from $X_0 \sim m_0$ of the generic player starting from $X_0 =  x$ and $Y_0 = y$
		is %given by 
		$$U (m_0, y, x) =a_y(0) x^\top x -2a_y(0)x^\top[m_0]_1 + k_y(0)[m_0]_1^\top [m_0]_1+ b_y(0) [m_0]_2^\top \1_d + c_y(0)$$
		for $y \in \mathcal Y$.
	\end{theorem}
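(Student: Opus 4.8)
\emph{Plan.} The proof will transcribe, into the $\mathbb R^d$-valued setting, the three-step scheme of Section~\ref{s:section3}: first establish well-posedness of the Riccati system \eqref{eq:ODEsys_multi}; then solve the generic player's control problem against a measure flow with Markovian first two moments, via an HJB verification argument with a quadratic ansatz (the analogue of Lemmas~\ref{l:verification}--\ref{l:ricatti1}); and finally close the fixed-point condition and collapse the resulting ODE system, exactly as in the proof of Theorem~\ref{t:main}.

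For well-posedness: the equations for $(a_y:y\in\mathcal Y)$ in \eqref{eq:ODEsys_multi} are identical to the scalar Riccati equations in \eqref{eq:ode1}, so their existence, uniqueness and boundedness on $[0,T]$ are already supplied by Lemma~\ref{l:eu_abc} (that is, by Lemmas~\ref{l:existN}--\ref{l:bdN}). With $(a_y)$ in hand, the system for $(b_y)$ is linear, and then so is the system for $(c_y,k_y)$; Theorem~12.1 in \cite{antsaklis2006} then gives existence and uniqueness. The only structural change is the dimensional factor $d$ in front of $a_y+b_y$ in the $c_y$-equation, which is irrelevant to linearity.

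For the control problem and the fixed point: I would take the obvious $\mathbb R^d$-valued version of Definition~\ref{d:markov}, with $\mu_t=[m_t]_1$ and $\nu_t=[m_t]_2$ the coordinatewise first and second moments, and look for the value function in the scalar-coefficient quadratic form
$$v_y(x,t,\bar\mu,\bar\nu)=a_y(t)\,x^\top x+d_y(t)\,\1_d^\top x+e_y(t)\,\1_d^\top\bar\mu+f_y(t)\,x^\top\bar\mu+k_y(t)\,\bar\mu^\top\bar\mu+b_y(t)\,\1_d^\top\bar\nu+c_y(t);$$
since the cost $\bar F(y,x,\bar\mu,\bar\nu)=h(y)(x^\top x-2\bar\mu^\top x+\1_d^\top\bar\nu)$ is isotropic and $\tilde b_1,\tilde b_2$ are scalar, this ansatz is closed. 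The optimal control is $\hat\alpha_t=-\tilde b_2(Y_t,t)\partial_x v=-\tilde b_2(Y_t,t)\bigl(2a_{Y_t}(t)\hat X_t+d_{Y_t}(t)\1_d+f_{Y_t}(t)\mu_t\bigr)$, and matching the $x^\top x$, $\1_d^\top x$, $\1_d^\top\bar\mu$, $x^\top\bar\mu$, $\bar\mu^\top\bar\mu$, $\1_d^\top\bar\nu$ and constant terms in the HJB reproduces a system of the same form as \eqref{eq:ode2}, with the $c_y$-equation picking up the factor $d$ from $\tfrac12\Delta v_y=d\,a_y$ and from $\1_d^\top\1_d=d$ in the $\nu$-drift. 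The verification step and the reduction to checking the fixed point are the vector versions of Lemmas~\ref{l:verification} and~\ref{l:mkv}, and they carry over verbatim: the coefficients are bounded on $[0,T]$ and Dynkin's formula (Lemma~\ref{l:chain rule}) is already stated for $\mathbb R^d$-valued processes. Computing the conditional moments of the optimal path and comparing with the postulated dynamics of $(\mu,\nu)$ fixes the coefficients $w_i$ as in \eqref{eq:w12}; then, exactly as in the proof of Theorem~\ref{t:main}, $l_y:=2a_y+f_y$ solves a homogeneous Riccati equation with zero terminal data, so $l_y\equiv0$, forcing $f_y=-2a_y$, then $d_y\equiv0$, then $e_y\equiv0$. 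This collapses $v_y$ to $a_y x^\top x-2a_y x^\top\bar\mu+k_y\bar\mu^\top\bar\mu+b_y\1_d^\top\bar\nu+c_y$, makes $\hat\mu$ solve $d\hat\mu_t=\tilde b_1(Y_t,t)\hat\mu_t\,dt$, and leaves precisely \eqref{eq:ODEsys_multi}; evaluating $v$ at $t=0$ with $(\bar\mu,\bar\nu)=([m_0]_1,[m_0]_2)$ gives the asserted formula for $U$.

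The main obstacle is purely one of verifying closure: one must check that the HJB does not generate genuinely matrix-valued cross terms (so that the scalar-coefficient ansatz is legitimate) and that it is the coordinatewise second-moment vector $\nu$, not a scalar, that carries the mean-field dependence --- this is exactly what produces the factor $d$ in the $c_y$-equation and the $\1_d$ contractions in $U$. Once this is in place, every remaining computation is a transcription of the one-dimensional argument of Section~\ref{s:section3}.
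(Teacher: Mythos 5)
Your proposal is correct and follows exactly the route the paper intends: the paper itself gives no proof of Theorem \ref{t:main_multi} beyond the remark that it is ``similar to the one-dimensional problem,'' and your transcription of the Section \ref{s:section3} scheme --- scalar-coefficient quadratic ansatz closed by isotropy of the cost and scalarity of $\tilde b_1,\tilde b_2$, the factor $d$ arising from $\tfrac12\Delta v_y=d\,a_y$ and $\1_d^\top\1_d=d$, and the collapse via $l_y=2a_y+f_y\equiv0$ --- is precisely the omitted argument. No gaps.
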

	The proof is similar to the one-dimensional problem, and we don't show the details here.

\section*{Acknowledgments}
We would like to acknowledge valuable discussions and insightful examples provided by Prof Jianfeng Zhang of the University of Southern California.

%\section{Appendix}

\bibliographystyle{plain}
%\bibliography{main}
%\bibliographystyle{plainnat}
%\bibliographystyle{apalike}

%\bibliography{../../refs}

\begin{thebibliography}{10}

\bibitem{ahuja2015mean}
Saran Ahuja.
\newblock {\em Mean Field Games with Common Noise}.
\newblock Stanford University, 2015.

\bibitem{antsaklis2006}
Panos~J Antsaklis and Anthony~N Michel.
\newblock {\em Linear systems}.
\newblock Springer Science \& Business Media, 2006.

\bibitem{Bar12}
Martino Bardi.
\newblock Explicit solutions of some linear-quadratic mean field games.
\newblock {\em Networks \& Heterogeneous Media}, 7(2):243, 2012.

\bibitem{BPF13}
Martino Bardi and Fabio~S Priuli.
\newblock Lqg mean-field games with ergodic cost.
\newblock In {\em 52nd IEEE Conference on Decision and Control}, pages
  2493--2498. IEEE, 2013.

\bibitem{Car10}
Pierre Cardaliaguet.
\newblock Notes on mean field games.
\newblock Technical report, Technical report, 2010.

\bibitem{CDLL19}
Pierre Cardaliaguet, Fran{\c{c}}ois Delarue, Jean-Michel Lasry, and
  Pierre-Louis Lions.
\newblock {\em The Master Equation and the Convergence Problem in Mean Field
  Games:(AMS-201)}, volume 201.
\newblock Princeton University Press, 2019.

\bibitem{carmona2018}
Ren{\'e} Carmona, Fran{\c{c}}ois Delarue, et~al.
\newblock {\em Probabilistic Theory of Mean Field Games with Applications
  I-II}.
\newblock Springer, 2018.

\bibitem{FHQ19}
Xinwei Feng, Jianhui Huang, and Zhenghong Qiu.
\newblock Mixed social optima and nash equilibrium in linear-quadratic-gaussian
  mean-field system.
\newblock {\em IEEE Transactions on Automatic Control}, 67(12):6858--6865,
  2021.

\bibitem{FJC20}
Dena Firoozi, Sebastian Jaimungal, and Peter~E Caines.
\newblock Convex analysis for lqg systems with applications to major-minor lqg
  mean-field game systems.
\newblock {\em Systems \& Control Letters}, 142:104734, 2020.

\bibitem{GMMZ21}
Wilfrid Gangbo, Alp{\'a}r~R M{\'e}sz{\'a}ros, Chenchen Mou, and Jianfeng Zhang.
\newblock Mean field games master equations with nonseparable hamiltonians and
  displacement monotonicity.
\newblock {\em The Annals of Probability}, 50(6):2178--2217, 2022.

\bibitem{GCH20}
Shuang Gao, Peter~E Caines, and Minyi Huang.
\newblock Lqg graphon mean field games.
\newblock {\em arXiv preprint arXiv:2004.00679}, 2020.

\bibitem{HH13}
Jianhui Huang and Minyi Huang.
\newblock Mean field lqg games with model uncertainty.
\newblock In {\em 52nd IEEE Conference on Decision and Control}, pages
  3103--3108. IEEE, 2013.

\bibitem{HLW15}
Jianhui Huang, Xun Li, and Tianxiao Wang.
\newblock Mean-field linear-quadratic-gaussian (lqg) games for stochastic
  integral systems.
\newblock {\em IEEE Transactions on Automatic Control}, 61(9):2670--2675, 2015.

\bibitem{Hua10}
Minyi Huang.
\newblock Large-population {LQG} games involving a major player: the {N}ash
  certainty equivalence principle.
\newblock {\em SIAM Journal on Control and Optimization}, 48(5):3318--3353,
  2009/10.

\bibitem{HCM12}
Minyi Huang, Peter~E Caines, and Roland~P Malham{\'e}.
\newblock Social optima in mean field lqg control: centralized and
  decentralized strategies.
\newblock {\em IEEE Transactions on Automatic Control}, 57(7):1736--1751, 2012.

\bibitem{HMC06}
Minyi Huang, Roland~P Malham{\'e}, Peter~E Caines, et~al.
\newblock Large population stochastic dynamic games: closed-loop mckean-vlasov
  systems and the nash certainty equivalence principle.
\newblock {\em Communications in Information \& Systems}, 6(3):221--252, 2006.

\bibitem{HY21}
Minyi Huang and Xuwei Yang.
\newblock Linear quadratic mean field games: decentralized {$O (1/ N$})-{N}ash
  equilibria.
\newblock {\em Journal of Systems Science \& Complexity}, 34(5):2003--2035,
  2021.

\bibitem{JT23}
Joe Jackson and Ludovic Tangpi.
\newblock Quantitative convergence for displacement monotone mean field games
  with controlled volatility.
\newblock {\em arXiv preprint arXiv:2304.04543}, 2023.

\bibitem{JSY23}
Jiamin Jian, Qingshuo Song, and Jiaxuan Ye.
\newblock Convergence rate of lqg mean field games with common noise.
\newblock {\em arXiv preprint arXiv:2307.00695}, 2023.

\bibitem{LJ07}
Jean-Michel Lasry and Pierre-Louis Lions.
\newblock Mean field games.
\newblock {\em Japanese journal of mathematics}, 2(1):229--260, 2007.

\bibitem{LXZ22}
Siyu Lv, Jie Xiong, and Xin Zhang.
\newblock Linear quadratic leader-follower stochastic differential games for
  mean-field switching diffusions.
\newblock {\em arXiv}, 2022.

\bibitem{MY06}
Xuerong Mao and Chenggui Yuan.
\newblock {\em Stochastic differential equations with Markovian switching}.
\newblock Imperial college press, 2006.

\bibitem{NSH12}
Son~L. Nguyen and Minyi Huang.
\newblock Mean field lqg games with mass behavior responsive to a major player.
\newblock In {\em 2012 IEEE 51st IEEE Conference on Decision and Control
  (CDC)}, pages 5792--5797. IEEE, 2012.

\bibitem{NNY20}
Son~L. Nguyen, Dung~T Nguyen, and George Yin.
\newblock A stochastic maximum principle for switching diffusions using
  conditional mean-fields with applications to control problems.
\newblock {\em ESAIM: Control, Optimisation and Calculus of Variations}, 26:69,
  2020.

\bibitem{NYH20}
Son~L. Nguyen, George Yin, and Tuan~A. Hoang.
\newblock On laws of large numbers for systems with mean-field interactions and
  markovian switching.
\newblock {\em Stochastic Processes and their Applications}, 130(1):262--296,
  2020.

\bibitem{tchuendom2018}
Rinel~Foguen Tchuendom.
\newblock Uniqueness for linear-quadratic mean field games with common noise.
\newblock {\em Dynamic Games and Applications}, 8(1):199--210, 2018.

\bibitem{TYL16}
Ky~Tran, George Yin, and Le~Yi Wang.
\newblock A generalized goodwin business cycle model in random environment.
\newblock {\em Journal of Mathematical Analysis and Applications},
  438(1):311--327, 2016.

\bibitem{YZ10}
George Yin and Chao Zhu.
\newblock {\em Hybrid switching diffusions}, volume~63 of {\em Stochastic
  Modelling and Applied Probability}.
\newblock Springer, New York, 2010.
\newblock Properties and applications.

\bibitem{ZYB97}
Qing Zhang, George Yin, and El-K{\'e}bir Boukas.
\newblock Controlled markov chains with weak and strong interactions:
  asymptotic optimality and applications to manufacturing.
\newblock {\em Journal of optimization theory and applications}, 94:169--194,
  1997.

\bibitem{ZY03}
Xun~Yu Zhou and George Yin.
\newblock Markowitz's mean-variance portfolio selection with regime switching:
  A continuous-time model.
\newblock {\em SIAM Journal on Control and Optimization}, 42(4):1466--1482,
  2003.

\bibitem{ZY09}
Chao Zhu and George Yin.
\newblock On competitive lotka--volterra model in random environments.
\newblock {\em Journal of Mathematical Analysis and Applications},
  357(1):154--170, 2009.

\end{thebibliography}

\end{document}